\documentclass[11pt,twoside]{amsart}

\usepackage{amssymb}
\usepackage{latexsym}

\hyphenation{classes}
\lefthyphenmin=2  \righthyphenmin=3
\arraycolsep 2pt

\usepackage{color}

\theoremstyle{plain}
\newtheorem{thm}{Theorem}[section]

\newtheorem{prop}[thm]{Proposition}
\newtheorem{cor}[thm]{Corollary}
\newtheorem{lemma}[thm]{Lemma}

\theoremstyle{definition}
\newtheorem{definition}[thm]{Definition}
\newtheorem{example}{Example}

\numberwithin{equation}{section}

\begin{document}

\title{Narrow orthogonally additive operators}

\author{Marat~Pliev}

\address{South Mathematical Institute of the Russian Academy of Sciences\\
str. Markusa 22,
Vladikavkaz, 362027 Russia}

\email{maratpliev@gmail.com}

\author{Mikhail~Popov}

\address{Department of Applied Mathematics\\
Chernivtsi National University\\
str.~Kotsyubyns'koho 2, Chernivtsi, 58012 (Ukraine)}

\email{misham.popov@gmail.com}


\keywords{Narrow operators, C-compact operators, orthogonally additive operators; abstract Uryson operators, Banach lattices}

\subjclass[2010]{Primary 47H30; Secondary 47H99.}

\begin{abstract}
We extend the notion of narrow operators to nonlinear maps on vector lattices. The main objects are orthogonally additive operators and, in particular, abstract Uryson operators. Most of the results extend known theorems obtained by O.~Maslyuchenko, V.~Mykhaylyuk and the second named author published in Positivity 13 (2009), pp.~459--495, for linear operators. For instance, we prove that every orthogonally additive laterally-to-norm continuous C-compact operator from an atomless Dedekind complete vector lattice to a Banach space is narrow. Another result asserts that the set $\mathcal U_{on}^{lc}(E,F)$ of all order narrow laterally continuous abstract Uryson operators is a band in the vector lattice of all laterally continuous abstract Uryson operators from an atomless vector lattice $E$ with the principal projection property to a Dedekind complete vector lattice $F$. The band generated by the disjointness preserving laterally continuous abstract Uryson operators is the orthogonal complement to $\mathcal U_n^{lc}(E,F)$.
\end{abstract}

\maketitle


\section{Introduction}

\subsection{About the paper}

Narrow operators were introduced and studied in 1990 by Plichko and the second named author \cite{PP} as a generalization of compact operators defined on symmetric function spaces. Since that, narrow operators were defined on much more general domain spaces generalizing the previous cases, like K\"{o}the function spaces \cite{FR}, vector lattices \cite{MMP} and lattice normed spaces \cite{Pl}. Now it is a subject of an intensive study (see recent monograph \cite{PR}).

Some properties of AM-compact operators are generalized to narrow operators, but not all of them. For example, a sum of two narrow operators on $L_1$ is narrow, but on an r.i. space $E$ on $[0,1]$ with an unconditional basis every operator is a sum of two narrow operators. Similar questions are very interesting and some of them are involved, see problems \cite[Chapter~12]{PR} and recent papers \cite{MP}, \cite{PSV}. In 2009 O.~Maslyuchenko, Mykhaylyuk and the second named author discovered that the sum phenomenon for narrow operators has a pure lattice nature, extending the notion to vector lattices \cite{MMP}. Namely, a sum of two regular narrow operators is always narrow in a quite general case. Since all operators on $L_1$ are regular, the sum of any two narrow operators on $L_1$ is narrow. But if $E$ is an r.i. space on $[0,1]$ with an unconditional basis then all examples of pairs of narrow operators with nonnarrow sum involve non-regular operators.

In the present paper we generalize the main results of \cite{MMP} on narrow operators to a wide class including non-linear maps called orthogonally additive operators. Our main observation here is that, in the most contexts of narrow operators we actually use their linearity only on orthogonal elements. On the other hand, all necessary background for such operators already has been built \cite{Maz-1}, \cite{Maz-2}.

Formally, the definitions of narrow and order narrow operators on vector lattices given in \cite{MMP} can be also applied to nonlinear maps. However, some natural ``small'' nonlinear maps which have to be narrow by the idea of narrowness, are nonnarrow with respect to these definitions. In this paper, we reformulate the definitions of narrow and order narrow operators in such a way that the new definitions are equivalent to the known ones for linear maps, and are substantial for nonlinear maps.

Using these ideas we prove some new results for orthogonally additive operators and abstract Uryson operators, an important subclass of orthogonally additive operators. Theorem~\ref{thm:ourmain} asserts that every orthogonally additive laterally-to-norm continuous $C$-compact operator from an atomless Dedekind complete vector lattice to a Banach space is narrow. In Theorem~\ref{th:narmod} we solve a dominated problem for order narrow abstract Uryson operators: if $E,F$ are vector lattices with $E$ atomless and $F$ an ideal of some order continuous Banach lattice then every abstract Uryson operator $T:E\to F$ is order narrow if and only if $|T|$ is. Then we prove a number of statements which concern auxiliary notions of $\lambda$-narrow and pseudo-narrow operators, the main of which theorems~\ref{th:ddddjfhhd7} and \ref{th:mainofwhich} establish that all the notions of narrow operators coincide, under some not so restrictive assumptions. Moreover, Theorem~\ref{th:mainofwhich} asserts that the set $\mathcal U_{onlc}(E,F)$ of all order narrow laterally continuous abstract Uryson operators is a band in the Dedekind complete vector lattice of all laterally continuous abstract Uryson operator from $E$ to $F$. Moreover, the orthogonal complement to $\mathcal U_{onlc}(E,F)$ equals the band generated by all disjointness preserving laterally continuous abstract Uryson operators from $E$ to $F$.

Ideologically, all the proofs from \cite{MMP} remain almost the same for the corresponding results in the present paper. However, pretty much all of them needed for reconstruction, and several (especially Lemma~\ref{le:gemor}) needed for a new approach.

\subsection{Terminology and notation}

General information on vector lattices and Banach spaces the reader can find in the books \cite{Al,Ku, LTI, LTII}.

Let $E$ be a vector lattice. An element $u > 0$ of $E$ is an \textit{atom} provided the conditions $0 \leq x \leq u$, $0 \leq y \leq u$ and $x \wedge y = 0$ imply that either $x = 0$ or $y = 0$. A vector lattice is said to be \emph{atomless} provided it has no atom. An element $y$ of a vector lattice $E$ is called a \textit{fragment} (in another terminology, a \textit{component}) of an element $x \in E$, provided $y \bot (x-y)$. The notation $y \sqsubseteq x$ means that $y$ is a fragment of $x$. Evidently, a non-zero element $x \in E$ is an atom if and only if the only fragments of $x$ are $0$ and $x$ itself. Hence, a vector lattice $E$ is atomless if each non-zero element $x \in E$ has a proper fragment $y \sqsubseteq x$, that is, $0 \neq y \neq x$. A net $(x_\alpha)_{\alpha \in \Lambda}$ in $E$ \textit{order converges} to an element $x \in E$ (notation $x_\alpha \stackrel{\rm o}{\longrightarrow} x$) if there exists a net $(u_\alpha)_{\alpha \in \Lambda}$ in $E$ such that $u_\alpha \downarrow 0$ and $|x_\beta - x| \leq u_\beta$ for all $\beta\in \Lambda$. The equality $x=\bigsqcup_{i=1}^{n}x_{i}$ means that $x=\sum_{i=1}^{n}x_{i}$ and $x_{i}\bot x_{j}$ if $i\neq j$. Two fragments $x_{1},x_{2}$ of $x$  are called \textit{mutually complemented} or $MC$, in short, if $x = x_1 \sqcup x_2$. A sequence $(e_{n})_{n=1}^{\infty}$ is called a {\it disjoint tree} on $e \in E$ if $e_{1}=e$ and $e_{n}=e_{2n}\bigsqcup e_{2n+1}$ for each $n\in\Bbb{N}$. It is clear that all $e_{n}$ are fragments of $e$.

Fix a vector lattice $E$ and $e \in E$. The set of the all fragments of $e$ we denote by $\mathfrak{F}_e$. Denote by $\Pi_e$ the collection of all finite subsets $\pi \subset \mathfrak{F}_e$ such that $e = \bigsqcup_{x \in \pi} x$. For $\pi_1, \pi_2 \in \Pi_e$ we write $\pi_1 \leq \pi_2$ provided for every $y \in \pi_2$ there exists $x \in \pi_1$ such that $y \sqsubseteq x$. Observe that $\pi_1 \leq \pi_2$ if and only if for every $x \in \pi_1$ there exists a subset $\pi \subseteq \pi_2$ such that $x = \bigsqcup_{y \in \pi} y$. Note that $\Pi_e$ is a directed set: given any $\pi_1, \pi_2 \in \Pi_e$, one has that $\pi_1 \leq \pi_3$ and $\pi_2 \leq \pi_3$, where $\pi_3 = \{x \wedge y: \,\, x \in \pi_1 \, \& \, y \in \pi_2\}$.

\subsection{Narrow operators}

The following two definitions of a narrow operator were given in \cite{MMP}, depending on whether the range space is a Banach space or a vector lattice.

\begin{definition} \label{def:0.2}
Let $E$ be an atomless vector lattice and $X$ a Banach space. A map $f: E \to X$ is called
\begin{itemize}
  \item \textit{narrow}, if for every $x \in E^+$ and every $\varepsilon > 0$ there exists $y \in E$ such that $|y| = x$ and $\|f(y)\| < \varepsilon$;
  \item \textit{strictly narrow}, if for every $x \in E^+$ there exists  $y \in E$ such that $|y| = x$ and $f(y) = 0$.
\end{itemize}
\end{definition}

\begin{definition} \label{Defnarrorder}
Let $E, F$ be vector lattices with $E$ atomless. A linear operator $T: E \to F$ is called \textit{order narrow}\index{order narrow operator} if for every $x \in E^+$ there exists a net $(x_\alpha)$ in $E$ such that $|x_\alpha| = x$ for each $\alpha$, and $T x_\alpha \stackrel{\rm o}{\to} 0$.
\end{definition}

The atomless condition on $E$ is not essential in the above two definitions, however a narrow map must send atoms to zero, so the condition serves to avoid triviality.

\subsection{Orthogonally additive operators}

This class of operators acting between vector lattices was introduced and studied in 1990 by Maz\'{o}n and Segura de Le\'{o}n \cite{Maz-1,Maz-2}, and then considered to be defined on lattice-normed spaces by Kusraev and Pliev \cite{Ku-1,Ku-2,Pl-3}. Let $E$ be a vector lattice and $F$ a real linear space. An operator $T:E\rightarrow F$ is called \textit{orthogonally additive} if $T(x+y)=T(x)+T(y)$ whenever $x,y\in E$ are disjoint. It follows from the definition that $T(0)=0$. It is immediate that the set of all orthogonally additive operators is a real vector space with respect to the natural linear operations.

Let $E$ and $F$ be vector lattices. An orthogonally additive operator $T:E\rightarrow F$ is called:
\begin{itemize}
  \item \textit{positive} if $Tx \geq 0$ holds in $F$ for all $x \in E$;
  \item \textit{order bounded} it $T$ maps order bounded sets in $E$ to order bounded sets in $F$.

\end{itemize}

Observe that if $T: E \to F$ is a positive orthogonally additive operator and $x \in E$ is such that $T(x) \neq 0$ then $T(-x) \neq - T(x)$, because otherwise both $T(x) \geq 0$ and $T(-x) \geq 0$ imply $T(x) = 0$. So, the above notion of positivity is far from the usual positivity of a linear operator: the only linear operator which is positive in the above sense is zero.

A positive orthogonally additive operator need not be order bounded. Indeed, every function $T: \mathbb R \to \mathbb R$ with $T(0) = 0$ is an orthogonally additive operator, and obviously, not each of them is order bounded.

Another useful observation is that, if $T: E \to F$ is a positive orthogonally additive operator and $x \sqsubseteq y \in E$ then $T(x) \leq T(y)$, no matter whether $y$ is positive or not.

\subsection{Abstract Uryson operators}

The classical integral abstract Uryson operator is presented in the following example.

\begin{example}
Let $(A,\Sigma,\mu)$ and $(B,\Xi,\nu)$ be $\sigma$-finite complete measure spaces, and let $(A\times B,\mu\times\nu)$ denote the completion of their product measure space. Let $K:A\times B\times\Bbb{R}\rightarrow\Bbb{R}$ be a function satisfying the following conditions\footnote{$(C_{1})$ and $(C_{2})$ are called the Carath\'{e}odory conditions}:
\begin{enumerate}
  \item[$(C_{0})$] $K(s,t,0)=0$ for $\mu\times\nu$-almost all $(s,t)\in A\times B$;
  \item[$(C_{1})$] $K(\cdot,\cdot,r)$ is $\mu\times\nu$-measurable for all $r\in\Bbb{R}$;
  \item[$(C_{2})$] $K(s,t,\cdot)$ is continuous on $\Bbb{R}$ for $\mu\times\nu$-almost all $(s,t)\in A\times B$.
\end{enumerate}
Given $f\in L_{0}(A,\Sigma,\mu)$, the function $|K(s,\cdot,f(\cdot))|$ is $\mu$-measurable  for $\nu$-almost all $s\in B$ and $h_{f}(s):=\int_{A}|K(s,t,f(t))|\,d\mu(t)$ is a well defined and $\nu$-measurable function. Since the function $h_{f}$ can be infinite on a set of positive measure, we define
$$
\text{Dom}_{A}(K):=\{f\in L_{0}(\mu):\,h_{f}\in L_{0}(\nu)\}.
$$
Then we define an operator $T:\text{Dom}_{A}(K)\rightarrow L_{0}(\nu)$ by setting
$$
(Tf)(s):=\int_{A}K(s,t,f(t))\,d\mu(t)\,\,\,\,\nu-\text{a.e.}\,\,\,\,(\star)
$$
Let $E$ and $F$ be order ideals in $L_{0}(\mu)$ and $L_{0}(\nu)$ respectively, $K$ a function satisfying $(C_{0})$-$(C_{2})$. Then $(\star)$ defines an \textit{orthogonally additive order bounded integral operator} acting from $E$ to $F$ if $E\subseteq \text{Dom}_{A}(K)$ and $T(E)\subseteq F$.
\end{example}

In \cite{Maz-1} Maz\'{o}n and Segura de Le\'{o}n introduced and studied abstract  Uryson operators, that possess the main properties of the integral  Uryson operators. More precisely, an orthogonally additive order bounded operator $T:E\rightarrow F$ between vector lattices $E$, $F$ is called an \textit{abstract Uryson} operator. For example, any   linear operator $T\in L_{+}(E,F)$ defines a positive abstract Uryson operator by $G (f) = T |f|$ for each $f \in E$. The set of all abstract Uryson operators from $E$ to $F$ we denote by $\mathcal{U}(E,F)$. Consider some more examples of abstract Uryson operators.

\begin{example} \label{Ex1}
We consider the vector space $\mathbb R^m$, $m \in \mathbb N$ as a vector lattice with the coordinate-wise order: for any $x,y \in \mathbb R^m$ we set $x \leq y$ provided $e_i^*(x) \leq e_i^*(y)$ for all $i = 1, \ldots, m$, where $(e_i^*)_{i=1}^m$ is the coordinate functionals on $\mathbb R^m$. Let $T:\Bbb{R}^{n}\rightarrow\Bbb{R}^{m}$. Then $T\in\mathcal{U}(\Bbb{R}^{n},\Bbb{R}^{m})$ if and only if there are real functions $T_{i,j}:\Bbb{R}\rightarrow\Bbb{R}$,
$1\leq i\leq m$, $1\leq j\leq n$ satisfying $T_{i,j}(0)=0$ such that
$$
e_i^*\bigl(T(x_{1},\dots,x_{n})\bigr) = \sum_{j=1}^{n}T_{i,j}(x_{j}),
$$
In this case we write $T=(T_{i,j})$.
\end{example}

\begin{example} \label{Ex2}
Let $T:l^{2}\rightarrow\Bbb{R}$ be the operator defined by
$$
T(x_{1},\dots,x_{n},\dots) = \sum_{n\in I_{x}}n\bigl(|x_{n}|-1\bigr)
$$
where $I_{x}:=\{n\in\Bbb{N}:\,|x_{n}|\geq 1\}$. It is not difficult to check that $T$ is a positive abstract Uryson operator.
\end{example}

\begin{example} \label{Ex3}
Let $(\Omega, \Sigma, \mu)$ be a measure space, $E$ a sublattice of the vector lattice $L_0(\mu)$ of all equivalence classes of $\Sigma$-measurable functions $x: \Omega \to \mathbb R$, $F$ a vector lattice and $\nu: \Sigma \to F$ a finitely additive measure. Then the map $T: E \to F$ given by $T(x) = \nu({\rm supp} \, x)$ for any $x \in E$, is an abstract Uryson operator which is positive if and only if $\nu$ is positive.
\end{example}

The set of all abstract Uryson operators from $E$ to $F$ we denote by $\mathcal{U}(E,F)$. Consider the following order on $\mathcal{U}(E,F):S\leq T$ whenever $T-S$ is a positive operator. Then $\mathcal{U}(E,F)$ becomes an ordered vector space. Actually, one can prove more.

\begin{thm}(\cite{Maz-1},Theorem~3.2.). \label{fjjjjjg}
Let $E$ and $F$ be vector lattices, $F$ Dedekind complete. Then $\mathcal{U}(E,F)$ is a Dedekind complete vector lattice. Moreover, for each $S,T\in \mathcal{U}(E,F)$ and $x\in E$ the following conditions hold
\begin{enumerate}
  \item $(T\vee S)(x)=\sup\{T(y)+S(z):\, x = y \sqcup z\}$.
  \item $(T\wedge S)(x)=\inf\{T(y)+S(z):\, x = y \sqcup z\}$.
  \item $(T)^{+}(x)=\sup\{Ty:\, y \, \sqsubseteq x\}$.
  \item $(T)^{-}(x)=-\inf\{Ty: \, y \, \sqsubseteq x\}$.
  \item $|T(x)|\leq|T|(x)$.
  \end{enumerate}
\end{thm}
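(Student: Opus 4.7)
The plan is to treat this as the Uryson-operator analogue of the Riesz--Kantorovich theorem, where mutually complemented fragment decompositions $x = y \sqcup z$ play the role of interval subdivisions $0 \leq y \leq x$ in the linear setting. I would define a candidate operator $R\colon E \to F$ by the formula in $(1)$, namely
$$R(x) := \sup\bigl\{T(y) + S(z) :\, x = y \sqcup z\bigr\},$$
and then prove in turn that (i) $R(x)$ exists in $F$ for every $x \in E$, (ii) $R \in \mathcal U(E,F)$, and (iii) $R = T \vee S$.

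For (i), every fragment $y$ of $x$ satisfies $|y| \leq |x|$: disjointness of $y$ and $x-y$ gives $|x| = |y + (x-y)| = |y| + |x-y|$. Hence the set of fragments of $x$ lies in the order interval $[-|x|, |x|]$, and order-boundedness of $T$ and $S$ together with Dedekind completeness of $F$ make $R(x)$ well defined. The main difficulty is (ii), specifically the orthogonal additivity of $R$. Given disjoint $x, x' \in E$, one shows that MC decompositions $x + x' = y \sqcup z$ are in bijective correspondence with pairs of MC decompositions $x = y_1 \sqcup z_1$, $x' = y_2 \sqcup z_2$, via $y = y_1 + y_2$, $z = z_1 + z_2$. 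For the forward direction, one must split $y$ using lattice operations; in the positive cone this is accomplished by $y_1 := y \wedge x$ and $y_2 := y - y_1$, and the general case reduces to the positive case via $x = x^+ - x^-$ with $x^+ \perp x^-$. Granting this bijection, orthogonal additivity follows from
$$T(y) + S(z) = \bigl[T(y_1) + S(z_1)\bigr] + \bigl[T(y_2) + S(z_2)\bigr]$$
together with the identity $\sup(A + B) = \sup A + \sup B$. Order-boundedness of $R$ follows from the same estimate $|y| \leq |x|$ applied to $x$ ranging over an order-bounded set.

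For (iii), the choices $y = x, z = 0$ and $y = 0, z = x$ give $R \geq T$ and $R \geq S$ respectively; for any dominator $U$ of both $T$ and $S$ and any decomposition $x = y \sqcup z$, orthogonal additivity of $U$ yields $T(y) + S(z) \leq U(y) + U(z) = U(x)$, so $R \leq U$. Formulas $(3)$ and $(4)$ follow by specializing $S = 0$ and $(T,S) = (0, -T)$, respectively. The modulus inequality $(5)$ is then immediate: taking $y = x$ and $y = 0$ in the $T^+$ formula gives $T^+(x) \geq T(x)$ and $T^+(x) \geq 0$, and similarly $T^-(x) \geq -T(x)$ and $T^-(x) \geq 0$, whence $|T|(x) = T^+(x) + T^-(x) \geq |T(x)|$. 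Finally, full Dedekind completeness of $\mathcal U(E,F)$ is obtained by the standard passage to finite joins: for any order-bounded family $\{T_\beta\}$, the upward directed net of finite joins exists by $(1)$, and its pointwise supremum, well defined by Dedekind completeness of $F$, is orthogonally additive (since disjoint sums commute with directed suprema) and order bounded, hence serves as the supremum in $\mathcal U(E,F)$.
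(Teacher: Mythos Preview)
The paper does not prove this theorem; it is quoted verbatim from Maz\'{o}n and Segura de Le\'{o}n \cite{Maz-1} as background, so there is no in-paper argument to compare against. Your outline is the natural Riesz--Kantorovich adaptation and is essentially the route taken in the original source: define the candidate by the fragment-decomposition supremum, verify orthogonal additivity via the correspondence between MC decompositions of $x+x'$ and pairs of MC decompositions of $x$ and $x'$, and then read off the least-upper-bound property.

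One point that deserves a little more care than you give it is the fragment-splitting step. Your formula $y_1 := y \wedge x$ is correct in the positive case, but the justification is not the Riesz decomposition property (which would only give $|y_i| \leq |x_i|$, not $y_i \sqsubseteq x_i$); rather, it is the Boolean-algebra structure of $\mathfrak{F}_{x+x'}$: since $x$ and $x'$ are complementary fragments of $x+x'$, distributivity in $\mathfrak{F}_{x+x'}$ gives $y = (y \wedge x) \sqcup (y \wedge x')$ with each piece a fragment of the corresponding summand. The reduction of the general case to the positive one also needs the (true but not entirely trivial) fact that $y \sqsubseteq x$ if and only if $y^{+} \sqsubseteq x^{+}$ and $y^{-} \sqsubseteq x^{-}$. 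With those two points made explicit, your argument goes through.
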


We also need the following result which represents the lattice operations of $\mathcal{U}(E,F)$ in terms of directed systems.

\begin{thm}[\cite{Maz-2}, Lemma~3.2] \label{fjjjjjgg}
Let $E$ and $F$ be vector lattices, $F$ Dedekind complete. Then for all $T,S\in\mathcal{U}(E,F)$ and $x\in E$ we have that
\begin{enumerate}
  \item $\Big\{\sum_{i=1}^{n}T(y_{i})\wedge S(y_{i}): \,\,\, x=\bigsqcup\limits_{i=1}^{n}y_{i};\,n\in\Bbb{N}\Big\}\downarrow(S\wedge T)(x)$.
  \item $\Big\{\sum_{i=1}^{n}T(y_{i})\vee S(y_{i}):\,\,\,x=\bigsqcup\limits_{i=1}^{n}y_{i};\,n\in\Bbb{N}\Big\}\uparrow(S\vee T)(x)$.
  \item $\Big\{\sum_{i=1}^{n}|T(y_{i})|:\,\,\,x=\bigsqcup\limits_{i=1}^{n}y_{i};\,n\in\Bbb{N}\Big\}\uparrow|T|(x)$.
\end{enumerate}
\end{thm}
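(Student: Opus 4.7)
The plan is to reduce each of the three claims to the two-fragment formulas in Theorem~\ref{fjjjjjg} using a standard distributive identity for meets over sums in a vector lattice, and to verify that the indicated systems are monotone nets along the directed set $\Pi_{x}$ introduced earlier. I will focus on (1); parts (2) and (3) should then follow by duality.

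For (1), fix $\pi = \{y_{1},\dots,y_{n}\} \in \Pi_{x}$ and set $\sigma_{\pi} := \sum_{i=1}^{n} T(y_{i}) \wedge S(y_{i})$. The key identity, provable by induction on $n$ using translation-invariance of $\wedge$ and its distributivity over $+$, is
\[
\sum_{i=1}^{n} (a_{i} \wedge b_{i}) \;=\; \bigwedge_{I \subseteq \{1,\dots,n\}} \Bigl( \sum_{i \in I} a_{i} + \sum_{i \notin I} b_{i}\Bigr).
\]
Applying it together with orthogonal additivity of $T$ and $S$ gives $\sigma_{\pi} = \bigwedge_{I} \bigl(T(y_{I}) + S(z_{I})\bigr)$, where $y_{I} := \bigsqcup_{i \in I} y_{i}$ and $z_{I} := \bigsqcup_{i \notin I} y_{i}$ satisfy $y_{I} \sqcup z_{I} = x$. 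By Theorem~\ref{fjjjjjg}(2) each term $T(y_{I}) + S(z_{I})$ dominates $(T \wedge S)(x)$, hence $\sigma_{\pi} \geq (T \wedge S)(x)$. For the reverse, any two-fragment decomposition $x = y \sqcup z$ with $\pi = \{y,z\}$ gives $\sigma_{\pi} \leq T(y) + S(z)$, so $\inf_{\pi} \sigma_{\pi} \leq (T \wedge S)(x)$ again by Theorem~\ref{fjjjjjg}(2). To see that $(\sigma_{\pi})$ is decreasing along $\Pi_{x}$, suppose $\pi_{1} \leq \pi_{2}$; each $u \in \pi_{1}$ splits as $u = \bigsqcup_{k} v_{k}$ with $\{v_{k}\} \subseteq \pi_{2}$, and the elementary inequality $\sum_{k}(a_{k} \wedge b_{k}) \leq (\sum_{k} a_{k}) \wedge (\sum_{k} b_{k})$ combined with orthogonal additivity yields $T(u) \wedge S(u) \geq \sum_{k} T(v_{k}) \wedge S(v_{k})$; summing over $u \in \pi_{1}$ gives $\sigma_{\pi_{1}} \geq \sigma_{\pi_{2}}$.

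Part (2) I would treat analogously, starting from the dual identity $\sum_{i}(a_{i} \vee b_{i}) = \bigvee_{I}(\sum_{i \in I} a_{i} + \sum_{i \notin I} b_{i})$ and Theorem~\ref{fjjjjjg}(1), or equivalently by substituting $-T,-S$ into (1) and using $S \vee T = -\bigl((-S) \wedge (-T)\bigr)$. Part (3) is then the specialization of (2) to $S = -T$, since $a \vee (-a) = |a|$ termwise and $T \vee (-T) = |T|$ in $\mathcal{U}(E,F)$. The main obstacle I foresee is the bookkeeping behind the distributive identity expressing $\sum_{i}(a_{i} \wedge b_{i})$ as an infimum over the $2^{n}$ subset-splittings of the index set: it is exactly the bridge that converts the $n$-term sums appearing in the statement into the $2$-term sums supplied by Theorem~\ref{fjjjjjg}, and once that identity is in hand the rest of the argument is essentially routine.
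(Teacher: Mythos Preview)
The paper does not supply a proof of this theorem; it is quoted verbatim as a known result from \cite{Maz-2} (their Lemma~3.2), so there is nothing in the present paper to compare your argument against.

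That said, your argument is correct. The distributive identity
\[
\sum_{i=1}^{n} (a_{i}\wedge b_{i}) \;=\; \bigwedge_{I\subseteq\{1,\dots,n\}}\Bigl(\sum_{i\in I}a_{i}+\sum_{i\notin I}b_{i}\Bigr)
\]
indeed holds in any vector lattice and follows by the induction you sketch, using only translation invariance $(a+c)\wedge(b+c)=(a\wedge b)+c$. With it, the lower bound $\sigma_{\pi}\ge (T\wedge S)(x)$ comes from Theorem~\ref{fjjjjjg}(2), the upper bound $\inf_{\pi}\sigma_{\pi}\le (T\wedge S)(x)$ comes from specializing to two-element $\pi$, and your monotonicity step (via $\sum_{k}(a_{k}\wedge b_{k})\le(\sum_{k}a_{k})\wedge(\sum_{k}b_{k})$) shows the net is decreasing along $\Pi_{x}$, which is precisely the meaning of $\downarrow$. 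The passage from (1) to (2) by replacing $T,S$ with $-T,-S$, and from (2) to (3) by taking $S=-T$, is clean: $-T\in\mathcal U(E,F)$, $(-a)\wedge(-b)=-(a\vee b)$ pointwise, and $T\vee(-T)=|T|$ in the lattice $\mathcal U(E,F)$. No gaps.
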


\section{Definitions of narrow and order narrow operators for orthogonally additive maps}
\label{sec2}

Let $(\Omega, \Sigma, \mu)$ be a finite atomless measure space, $1 \leq p < \infty$ and let $T: L_p(\mu) \to \mathbb R$ be the map defined by
\begin{equation} \label{star}
Tx = \|x\|^p \,\,\, \mbox{for all} \,\,\, x \in L_p(\mu).
\end{equation}
Observe that $T$ is a nonnarrow abstract Uryson operator with respect to Definition~\ref{def:0.2}. On the other hand, by the initial idea, narrow operator must contain ``small'' operators like the above, which is a rank one operator. The same properties possesses the operator from Example~\ref{Ex3}. The first example of a nonnarrow continuous linear functional was provided in \cite{MMP}. However, this functional is not order-to-norm continuous and defined on $L_\infty$, the norm of which is not absolutely continuous (order continuous). To the contrast, the operator $T$ defined by \eqref{star} is order-to-norm continuous. The following definition, being equivalent to Definition~\ref{def:0.2} for linear maps, makes this operator to be narrow.

\begin{definition} \label{def:nar1}
Let $E$ be an atomless vector lattice and $X$ a Banach space. A map $T: E \to X$ is called:
\begin{itemize}
  \item \emph{narrow at a point} $e \in E$ if for every $\varepsilon > 0$ there exist $MC$ fragments $e_1$, $e_2$ of $e$ such that $\|T(e_1)-T(e_2)\|<\varepsilon$;
  \item \emph{narrow} if it is narrow at every $e \in E$;
  \item \emph{locally narrow at $e_0$} if $T$ is narrow at every $e \in \mathcal{F}_{e_0}$;
  \item \emph{strictly narrow at} $e \in E$ if there exist $MC$ fragments $e_1$, $e_2$ of $e$ such that $T(e_1) = T(e_2)$;
  \item \emph{strictly narrow} if it is strictly narrow at every $e \in E$.
\end{itemize}
\end{definition}

Next is the corresponding new definition of an order narrow operator.

\begin{definition} \label{def:nar2}
Let $E,F$ be vector lattices with $E$ atomless. A map $T:E\to F$ is called
\begin{itemize}
  \item \emph{order narrow at a point} $e \in E$ if there exists a net of decompositions $e = f_\alpha \sqcup g_\alpha$ such that $(T(f_\alpha) - T(g_\alpha) )\overset{\rm o}\longrightarrow 0$;
  \item \emph{order narrow} if it is order narrow at every $e \in E$;
  \item \emph{locally order narrow at $e_0$} if $T$ is order narrow at every $e \in \mathcal{F}_{e_0}$.
\end{itemize}
\end{definition}

We will use the following simple observation.

\begin{prop} \label{pr:wonder}
Let $E$ be an atomless vector lattice and $X$ a Banach space (resp., vector space or vector lattice). If an orthogonally additive operator $T: E \to X$ is narrow (resp., strictly narrow or order narrow) at every $e \in E^+ \cup E^-$ then $T$ is narrow (resp., strictly narrow or order narrow).
\end{prop}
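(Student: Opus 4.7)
The plan is to reduce narrowness at an arbitrary $e\in E$ to narrowness at the two elements $e^{+}\in E^{+}$ and $-e^{-}\in E^{-}$, and then recombine the resulting decompositions via the orthogonal additivity of $T$. The starting observation is that, since $e^{+}\wedge e^{-}=0$, the elements $e^{+}$ and $-e^{-}$ are disjoint and satisfy $e=e^{+}+(-e^{-})$, so $\{e^{+},-e^{-}\}\in\Pi_{e}$ and in particular $e^{+}\sqsubseteq e$ and $-e^{-}\sqsubseteq e$.

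For the narrow case, I would fix $e\in E$ and $\varepsilon>0$ and apply the hypothesis at $e^{+}$ and at $-e^{-}$ to obtain MC fragments $u_{1}\sqcup u_{2}=e^{+}$ and $v_{1}\sqcup v_{2}=-e^{-}$ with $\|T(u_{1})-T(u_{2})\|<\varepsilon/2$ and $\|T(v_{1})-T(v_{2})\|<\varepsilon/2$. Then I would set $e_{i}:=u_{i}+v_{i}$ for $i=1,2$ and verify that $e=e_{1}\sqcup e_{2}$: since $u_{i}\sqsubseteq e^{+}$ and $v_{j}\sqsubseteq -e^{-}$, all four pairs $(u_{i},v_{j})$ are disjoint, which gives both that each sum $u_{i}+v_{i}$ is a disjoint sum and that $e_{1}\perp e_{2}$. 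Orthogonal additivity then yields
\[
T(e_{1})-T(e_{2})=\bigl(T(u_{1})-T(u_{2})\bigr)+\bigl(T(v_{1})-T(v_{2})\bigr),
\]
and the triangle inequality finishes the narrow case. The strictly narrow case is identical with the two differences equal to $0$.

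For the order narrow case, by hypothesis there are nets $e^{+}=f_{\alpha}^{+}\sqcup g_{\alpha}^{+}$ and $-e^{-}=f_{\beta}^{-}\sqcup g_{\beta}^{-}$ with $T(f_{\alpha}^{+})-T(g_{\alpha}^{+})\stackrel{\rm o}{\to}0$ and $T(f_{\beta}^{-})-T(g_{\beta}^{-})\stackrel{\rm o}{\to}0$. I would index by the product directed set and set $f_{(\alpha,\beta)}:=f_{\alpha}^{+}+f_{\beta}^{-}$, $g_{(\alpha,\beta)}:=g_{\alpha}^{+}+g_{\beta}^{-}$. The same disjointness analysis shows $e=f_{(\alpha,\beta)}\sqcup g_{(\alpha,\beta)}$, and the identity
\[
T(f_{(\alpha,\beta)})-T(g_{(\alpha,\beta)})=\bigl(T(f_{\alpha}^{+})-T(g_{\alpha}^{+})\bigr)+\bigl(T(f_{\beta}^{-})-T(g_{\beta}^{-})\bigr)
\]
together with stability of order convergence under sums (dominating by $u_{\alpha}+u_{\beta}\downarrow 0$) yields order convergence to $0$ along the product net.

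There is no real obstacle here; the only point that needs a moment's care is the verification that $e_{1}\sqcup e_{2}=e$, which uses that disjointness of $e^{+}$ and $-e^{-}$ propagates to all fragments and hence that the cross-terms $u_{i}\wedge|v_{j}|$ vanish. Once this bookkeeping is in place, orthogonal additivity of $T$ on the pair $(u_{i},v_{i})$ is what lets the estimates at $e^{+}$ and $-e^{-}$ combine additively into the desired estimate at $e$.
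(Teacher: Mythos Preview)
Your proposal is correct and follows essentially the same approach as the paper: decompose $e=e^{+}\sqcup(-e^{-})$, apply the hypothesis to each part, and recombine using orthogonal additivity. The paper only writes out the narrow case and dismisses the others with ``proofs for the rest of cases are similar''; your explicit treatment of the order narrow case via the product directed set is the natural elaboration of that remark, and your more careful justification of $e_{1}\perp e_{2}$ is a welcome addition.
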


\begin{proof}
We prove for the case of a narrow operator; proofs for the rest of cases are similar. Fix any $e \in E$ and $\varepsilon > 0$. Choose decompositions $e^+ = e_1 \sqcup e_2$ è $-e^- = e'_1 \sqcup e'_2$ so that $\|T(e_1)-T(e_2)\| < \frac{\varepsilon}{2}$ and $\|T(e'_{1})-T(e'_{2})\|<\frac{\varepsilon}{2}$. Then $e = (e_1 + e'_1) \sqcup (e_2 + e'_2)$ and
\begin{align*}
\|T(e_1 + e'_1) - T(e_2 + e'_2)\| &= \|T(e_1) + T(e'_1) - T(e_2) - T(e'_2)\| \\
&\leq \|T(e_1) - T(e_2)\| + \|T(e'_1)-T(e'_2)\| < \varepsilon.
\end{align*}
\end{proof}

Remark that narrowness of an orthogonally additive operator $T$ at elements $e \in E^+$ does not imply narrowness $T$ at elements $e \in E^-$. Indeed, given any orthogonally additive operators $T_1, T_2 : E \to X$, the operator $T: E \to X$ defined by $T (x) = T_1(x^+) + T_2(-x^-)$ for all $x \in E$ is orthogonally additive. So, for $T_1 = 0$ and $T_2 = - Id$, where $Id$ is the identity operator on $E$, the orthogonally additive operator given by $T(x) = x^-$ for all $x \in E$ is narrow at all positive elements and is not narrow at all strictly negative elements.

Proofs of the following two lemmas are simple exercises.

\begin{lemma} \label{le:2}
Let $E,F$ be vector lattices with $E$ atomless. A linear operator $T:E\to F$ is narrow (strictly narrow) in the sense of Definition~\ref{def:0.2} if and only if $T$ is narrow (strictly narrow) in the sense of Definition~\ref{def:nar1}.
\end{lemma}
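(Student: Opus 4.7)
The plan is to exploit the elementary bijection, valid for any $x \in E^+$, between elements $y \in E$ with $|y| = x$ and MC decompositions $x = e_1 \sqcup e_2$: going from right to left, $y := e_1 - e_2$; going from left to right, $e_1 := y^+$ and $e_2 := y^-$ (so that $x = y^+ + y^-$ with $y^+ \perp y^-$ since $|y| = y^+ + y^-$). Linearity of $T$ supplies the identity $T(y) = T(e_1) - T(e_2)$, which is exactly what is needed to pass between the bound of Definition~\ref{def:0.2} and the bound of Definition~\ref{def:nar1}.

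Concretely, for the forward direction (Definition~\ref{def:0.2} $\Rightarrow$ Definition~\ref{def:nar1}), I would fix $x \in E^+$ and $\varepsilon > 0$, apply Definition~\ref{def:0.2} to produce $y$ with $|y| = x$ and $\|T(y)\| < \varepsilon$, and read off the MC fragments $e_1 := y^+$, $e_2 := y^-$ of $x$ witnessing narrowness of $T$ at $x$ in the sense of Definition~\ref{def:nar1}. This yields narrowness at every point of $E^+$. To upgrade to narrowness at every point of $E$, I would apply the same observation to $-e \in E^+$ for $e \in E^-$: if $-e = f_1 \sqcup f_2$ satisfies the inequality, then $-f_1$ and $-f_2$ are MC fragments of $e$ (since $|-f_i| = f_i$ and $f_1 \perp f_2$) with $\|T(-f_1) - T(-f_2)\| = \|T(f_1) - T(f_2)\|$ by linearity. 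Since every linear operator is orthogonally additive, Proposition~\ref{pr:wonder} then promotes narrowness on $E^+ \cup E^-$ to narrowness on all of $E$. The converse direction reads the same correspondence backwards: from MC fragments $e_1, e_2$ of $x \in E^+$ with $\|T(e_1) - T(e_2)\| < \varepsilon$ one forms $y := e_1 - e_2$, noting that $|y| = e_1 + e_2 = x$ and $\|T(y)\| = \|T(e_1) - T(e_2)\| < \varepsilon$.

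The strictly narrow case is identical with ``$< \varepsilon$'' replaced by ``$= 0$'' throughout. I do not expect any substantive obstacle: Definition~\ref{def:nar1} has been crafted precisely to coincide with Definition~\ref{def:0.2} on linear maps while remaining meaningful for nonlinear orthogonally additive maps, and the proof essentially records this design principle together with the use of Proposition~\ref{pr:wonder} to cover elements that are neither positive nor negative.
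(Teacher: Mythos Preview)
Your proposal is correct; the paper omits the proof entirely, declaring it a ``simple exercise,'' and your argument is exactly the natural one the authors have in mind. The bijection $y \leftrightarrow (y^+, y^-)$ together with the linear identity $T(y) = T(y^+) - T(y^-)$ handles $E^+$, the passage to $E^-$ via $e \mapsto -e$ is immediate by linearity, and the appeal to Proposition~\ref{pr:wonder} finishes the job.
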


\begin{lemma} \label{le:2.5}
Let $E,F$ be vector lattices with $E$ atomless. A linear operator $T:E\to F$ is order narrow in the sense of Definition~\ref{Defnarrorder} if and only if $T$ is order narrow in the sense of Definition~\ref{def:nar2}.
\end{lemma}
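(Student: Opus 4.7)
The plan is to exhibit a natural bijection between the two kinds of ``randomizing data'' that appear in the two definitions. For a fixed $x\in E^+$, elements $u\in E$ with $|u|=x$ correspond bijectively to decompositions $x=f\sqcup g$ via $u\leftrightarrow(f,g)=(u^+,u^-)$ and its inverse $(f,g)\leftrightarrow u=f-g$, and under this correspondence linearity gives the identity $Tu=T(f)-T(g)$. Every step of the proof will be the application of this observation to the net parameter $\alpha$.

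Concretely, for the direction Definition~\ref{Defnarrorder} $\Rightarrow$ Definition~\ref{def:nar2}, I would first establish order narrowness at every $x\in E^+$: given the net $(x_\alpha)$ provided by Definition~\ref{Defnarrorder} with $|x_\alpha|=x$ and $Tx_\alpha\stackrel{\rm o}\longrightarrow 0$, set $f_\alpha:=x_\alpha^+$ and $g_\alpha:=x_\alpha^-$; then $x=f_\alpha\sqcup g_\alpha$ and $T(f_\alpha)-T(g_\alpha)=Tx_\alpha\stackrel{\rm o}\longrightarrow 0$. To upgrade this to order narrowness at every $e\in E$ as demanded by Definition~\ref{def:nar2}, I would invoke Proposition~\ref{pr:wonder}, which reduces the task to verifying narrowness on $E^+\cup E^-$; narrowness on $E^+$ has just been established, and narrowness at $e\in E^-$ follows from narrowness at $-e\in E^+$ because, by linearity, a decomposition $-e=f_\alpha\sqcup g_\alpha$ induces the decomposition $e=(-f_\alpha)\sqcup(-g_\alpha)$ with $T(-f_\alpha)-T(-g_\alpha)=-(T(f_\alpha)-T(g_\alpha))$, so the $o$-convergence to zero is preserved.

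The converse direction, Definition~\ref{def:nar2} $\Rightarrow$ Definition~\ref{Defnarrorder}, is even more direct. Given $x\in E^+$, Definition~\ref{def:nar2} supplies a net of decompositions $x=f_\alpha\sqcup g_\alpha$ with $T(f_\alpha)-T(g_\alpha)\stackrel{\rm o}\longrightarrow 0$. Setting $x_\alpha:=f_\alpha-g_\alpha$ gives $|x_\alpha|=f_\alpha+g_\alpha=x$ since $f_\alpha\perp g_\alpha$, and by linearity $Tx_\alpha=T(f_\alpha)-T(g_\alpha)\stackrel{\rm o}\longrightarrow 0$, which is exactly the property required by Definition~\ref{Defnarrorder}.

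There is no substantial obstacle here, since both definitions are tailored to be equivalent in the linear case; the only mild subtlety is that Definition~\ref{def:nar2} quantifies over all $e\in E$ while Definition~\ref{Defnarrorder} only tests positive elements, and this discrepancy is precisely absorbed by Proposition~\ref{pr:wonder} combined with the linearity-induced symmetry between narrowness at $e$ and at $-e$.
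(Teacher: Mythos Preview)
Your proof is correct and follows the natural bijection between the two formulations; the paper itself does not give a proof of this lemma, remarking only that it is a simple exercise, so your argument is exactly the kind of routine verification the authors have in mind. The one point worth noting is that in the converse direction you implicitly use that fragments of a positive element are positive (so that $|f_\alpha-g_\alpha|=f_\alpha+g_\alpha$), which is standard but could be mentioned for completeness.
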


The proof of the following lemma is almost the same as the proof of the corresponding statement for linear maps (see \cite[Lemma~3.2]{Pl} and \cite[Proposition~10.7]{PR}).

\begin{lemma} \label{le:3}
Let $E$ be an atomless vector lattice and $F$ a Banach lattice. Then every narrow abstract Uryson operator $T:V\to W$ is order narrow.
\end{lemma}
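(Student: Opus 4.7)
Fix an arbitrary $e \in E$; by definition of order narrowness it suffices to exhibit a single net of decompositions $e = f_\alpha \sqcup g_\alpha$ such that $T(f_\alpha) - T(g_\alpha) \overset{\rm o}\longrightarrow 0$ in $F$. The plan is to use the $\varepsilon$-freedom in Definition~\ref{def:nar1} to pick a sequence of such decompositions with $\|T(f_n) - T(g_n)\|$ decaying faster than any summable rate, and then convert norm-smallness into order convergence via the Banach lattice structure of $F$.

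First, applying narrowness of $T$ at $e$ with $\varepsilon = 2^{-n}$, for each $n \in \mathbb{N}$ I would select $MC$ fragments $f_n, g_n$ of $e$ with $\|T(f_n) - T(g_n)\| < 2^{-n}$, and set $y_n := T(f_n) - T(g_n)$. Since $\sum_n \|y_n\| < \infty$ and $\| |y_n| \| = \|y_n\|$ in the Banach lattice $F$, the partial sums $\sum_{k=1}^N |y_k|$ form an increasing norm-Cauchy sequence and hence converge in norm to some $s \in F^+$. Defining the tails $u_n := s - \sum_{k=1}^{n-1}|y_k| = \lim_{N\to\infty}\sum_{k=n}^{N}|y_k|$ (norm limit), one has $u_n \downarrow$, $\|u_n\| \leq \sum_{k \geq n} 2^{-k} \to 0$, and $|y_n| \leq u_n$ by construction.

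The last step is to promote the norm-decrease $\|u_n\| \to 0$ of the decreasing sequence $(u_n)$ to actual order convergence $u_n \downarrow 0$: if $v \in F$ satisfies $v \leq u_n$ for every $n$, then $v^+ \leq u_n$, hence $\|v^+\| \leq \|u_n\| \to 0$, forcing $v^+ = 0$ and so $v \leq 0$; thus $\inf_n u_n = 0$. Combined with $|y_n| \leq u_n \downarrow 0$, this yields $y_n \overset{\rm o}\longrightarrow 0$, proving that $T$ is order narrow at $e$.

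The only mildly subtle ingredient is the monotone-norm implication in the final paragraph, which hinges on $0 \leq a \leq b$ forcing $\|a\| \leq \|b\|$ in a Banach lattice; everything else is routine extraction of a fast-decaying subsequence. I do not anticipate any genuine obstacle, which is why this statement is labelled a lemma and cited as essentially parallel to the linear case \cite[Lemma~3.2]{Pl}, \cite[Proposition~10.7]{PR}.
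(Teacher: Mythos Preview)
Your argument is correct and is precisely the standard route the paper has in mind: the paper omits the proof and simply points to \cite[Lemma~3.2]{Pl} and \cite[Proposition~10.7]{PR}, where exactly this construction---choosing decompositions with $\|T(f_n)-T(g_n)\|<2^{-n}$, forming the tail majorants $u_n=\sum_{k\ge n}|y_k|$, and using monotonicity of the Banach-lattice norm to conclude $u_n\downarrow 0$---is carried out in the linear setting. Nothing in that argument uses linearity beyond orthogonal additivity, so your adaptation goes through verbatim.
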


The sets of narrow and order narrow abstract Uryson operators coincide if the range vector lattice is good enough.

\begin{lemma} \label{le:4}
Let $E$ be an atomless vector lattice and $F$ a Banach lattice with an order continuous norm. Then an abstract Uryson operator $T:E\to F$ is order narrow if and only if $T$ is narrow.
\end{lemma}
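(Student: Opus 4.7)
The forward implication (narrow implies order narrow) is nothing but Lemma~\ref{le:3}, which is already available without the order continuity assumption. So my plan concentrates on the converse: assuming $T$ is order narrow and the norm of $F$ is order continuous, I would show that $T$ is narrow.

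Fix $e \in E$ and $\varepsilon > 0$. By Definition~\ref{def:nar2} (order narrowness at $e$), there is a net of decompositions $e = f_\alpha \sqcup g_\alpha$ such that $T(f_\alpha) - T(g_\alpha) \stackrel{\rm o}{\longrightarrow} 0$ in $F$. Unwinding the definition of order convergence recalled in the introduction, this gives a net $(u_\alpha)$ in $F$ with $u_\alpha \downarrow 0$ and
\[
|T(f_\alpha) - T(g_\alpha)| \leq u_\alpha \quad \text{for every } \alpha.
\]
Since $F$ is a Banach lattice with order continuous norm, $u_\alpha \downarrow 0$ implies $\|u_\alpha\| \to 0$; hence $\|T(f_\alpha) - T(g_\alpha)\| \leq \|u_\alpha\| \to 0$.

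Therefore, there exists an index $\alpha_0$ with $\|T(f_{\alpha_0}) - T(g_{\alpha_0})\| < \varepsilon$. Setting $e_1 := f_{\alpha_0}$ and $e_2 := g_{\alpha_0}$, we obtain $MC$ fragments of $e$ satisfying $\|T(e_1) - T(e_2)\| < \varepsilon$. This is exactly the condition for $T$ to be narrow at $e$ (Definition~\ref{def:nar1}). As $e \in E$ was arbitrary, $T$ is narrow.

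There is no real obstacle here: the content of the lemma is essentially the well-known fact that in a Banach lattice with order continuous norm, order convergence of a net implies norm convergence, applied to the specific net $T(f_\alpha) - T(g_\alpha)$ supplied by order narrowness. No features of $T$ beyond being a map into $F$ are used, so the hypothesis of $T$ being an abstract Uryson operator is actually unnecessary for this implication; it is retained only to match the framing of the statement.
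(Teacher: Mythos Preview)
Your proof is correct and follows essentially the same approach as the paper: both use Lemma~\ref{le:3} for the forward direction and, for the converse, apply order continuity of the norm of $F$ to convert the order convergence $T(f_\alpha)-T(g_\alpha)\stackrel{\rm o}{\to}0$ into norm convergence, then pick a single index $\alpha_0$. Your write-up merely unpacks the order-convergence definition a bit more explicitly and adds the (accurate) remark that the abstract Uryson hypothesis is not actually needed here.
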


\begin{proof}
Let $T$ be order narrow. Then for every $e\in E$ there exists a net of decompositions $e = f_\alpha \sqcup g_\alpha$, $\alpha \in \Lambda$ such that $(T(f_\alpha) - T(g_\alpha) )\overset{\rm o}\longrightarrow 0$. Fix any $\varepsilon>0$. By the order continuity of the norm of $F$, we can find $\alpha_{0}\in\Lambda$ such that $\|T(f_\alpha) - T(g_\alpha)\|<\varepsilon$ for every $\alpha\geq\alpha_{0}$. So, $T$ is narrow. In view of Lemma~\ref{le:3}, the converse is true.
\end{proof}

\begin{lemma} \label{le:5}
Let $E$, $F$ and $G$ be vector lattices with $E$ atomless and $F$ an order ideal of $G$. If an abstract Uryson operator $T:E\to F$ is order narrow (resp., locally order narrow) at a point $x \in E$ then $T:E\to G$ is order narrow (resp., locally order narrow) at $x$. Conversely, if an abstract Uryson operator $T:E\to F$ is such that $T:E\to G$ is order narrow (resp., locally order narrow) at a point $x \in E$ then so is $T:E\to F$.
\end{lemma}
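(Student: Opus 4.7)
The strategy is to reduce both directions to a single general observation: for a net $(y_\alpha)_{\alpha\in\Lambda}$ lying entirely in $F$, the property $y_\alpha \stackrel{\rm o}{\longrightarrow} 0$ is independent of whether we test it in $F$ or in $G$. Granted this, both assertions of the lemma follow at once: apply the observation to the net $y_\alpha := T(f_\alpha)-T(g_\alpha)$ associated with any net of decompositions $x = f_\alpha \sqcup g_\alpha$, and note that these differences lie in $F$ because $T$ is $F$-valued. The ``locally order narrow'' versions then follow by applying the ``order narrow'' equivalence separately at every $e \in \mathcal{F}_{e_0}$.

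For the direction from $F$-convergence to $G$-convergence the same majorant works. Suppose $u_\alpha \in F^+$ with $u_\alpha \downarrow 0$ in $F$ and $|y_\alpha| \leq u_\alpha$; I claim $u_\alpha \downarrow 0$ in $G$ as well. Let $v \in G$ be a lower bound of $(u_\alpha)$ in $G$ and fix any $\alpha_0 \in \Lambda$. Then $0 \leq v^+ \leq u_{\alpha_0}$, and the ideal property of $F$ forces $v^+ \in F$. So $v^+$ is a lower bound of $(u_\alpha)$ in $F$ itself; since $u_\alpha \downarrow 0$ in $F$ we conclude $v^+ = 0$, i.e., $v \leq 0$. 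Thus $\inf_G u_\alpha = 0$, and the original majorant witnesses order convergence in $G$ too.

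For the reverse direction, from $G$-convergence to $F$-convergence, the construction must be reversed: starting from $u_\alpha \in G^+$ with $u_\alpha \downarrow 0$ in $G$ and $|y_\alpha| \leq u_\alpha$, one manufactures a majorant lying entirely in $F$. The guiding principle is again the ideal condition---any positive element of $G$ bounded above by an element of $F$ must itself belong to $F$---so that a suitable truncation of $u_\alpha$ inside $F$ will both dominate $|y_\alpha|$ and lie in $F$. The hard step, and the main technical obstacle, is ensuring that the truncated majorant is actually decreasing in the net index $\alpha$: naive candidates such as $|y_\alpha|$ or $u_\alpha \wedge |y_\alpha|$ already lie in $F$ but fail to be decreasing, while a natural tail supremum $\sup_{\beta \geq \alpha}|y_\beta|$ is decreasing but requires a Dedekind-completeness-type argument to exist inside $F$. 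This is the step where I expect the most care is needed, and where the interplay between the ideal property and the directed structure of $\Lambda$ must be exploited carefully.
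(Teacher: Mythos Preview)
Your first direction and the reduction of the ``locally'' case to the ``pointwise'' case are fine and match the paper's treatment. The gap is in the converse direction: you correctly identify that one needs to truncate the $G$-majorant $u_\alpha$ to land in $F$, but you never find a truncation that works, and you leave the proof unfinished.

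The missing idea is that $T$ is not just any map but an \emph{abstract Uryson operator}, hence \emph{order bounded}. The set $\{T(y): y \sqsubseteq x\}$ is therefore order bounded in $F$, say $|T(y)| \leq f$ for all $y \sqsubseteq x$ with $f \in F^+$. In particular $|y_\alpha| = |T(f_\alpha) - T(g_\alpha)| \leq 2f$ for every $\alpha$. Now truncate by this \emph{fixed} element: set $w_\alpha := u_\alpha \wedge (2f)$. Then $w_\alpha \in F$ (since $0 \leq w_\alpha \leq 2f \in F$ and $F$ is an ideal), $w_\alpha$ is decreasing (because $u_\alpha$ is and $2f$ is constant), $|y_\alpha| \leq w_\alpha$ (since $|y_\alpha|$ lies below both $u_\alpha$ and $2f$), and $w_\alpha \downarrow 0$ in $F$ (any lower bound in $F$ is a lower bound in $G$, and $w_\alpha \leq u_\alpha \downarrow 0$ in $G$). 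This is exactly the paper's argument. Your candidates $|y_\alpha|$, $u_\alpha \wedge |y_\alpha|$, and $\sup_{\beta \geq \alpha}|y_\beta|$ all fail precisely because they vary with $\alpha$ in the wrong way; the point is to cap by something that does not depend on $\alpha$ at all, and order boundedness supplies that cap.
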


\begin{proof}
Observe that the statement for locally order narrow operators follows from the same statement for order narrow operators, so we prove it for order narrow operators. The first part is obvious. Let $T:E\to F$ be an abstract Uryson operator such that $T:E\to G$ is order narrow at $x_{0} \in E$ and $x\in\mathcal{F}_{x_{0}}$. Choose a net of decompositions $x = y_\alpha \sqcup z_\alpha$, $\alpha \in \Lambda$ so that $(T(y_\alpha) - T(z_\alpha) ) \overset{\rm o}\longrightarrow 0$ in $G$, that is, $|T(y_\alpha) - T(z_\alpha)| \leq u_\alpha \downarrow 0$ for some net $(u_\alpha)_{\alpha \in \Lambda}$ in $G$. By the order boundedness of $T$, there is $f \in F$ such that $|Ty| \leq f$ for every $y \sqsubseteq x$. In particular, $|T(y_\alpha) - T(z_\alpha)| \leq 2f$.
Hence, $|T(y_\alpha) - T(z_\alpha)| \leq w_\alpha \downarrow 0$ where $w_\alpha = (2f) \wedge u_\alpha \in F$. So, we have that $(w_\alpha)_{\alpha \in \Lambda} \subseteq G$. Thus, $(T(y_\alpha) - T(z_\alpha))_{\alpha \in \Lambda} \overset{\rm o}\longrightarrow 0$ in $F$.
\end{proof}

\section{Laterally-to-norm continuous C-compact abstract Uryson operators are narrow}
\label{sec4}

In this section we generalize the result of O.~Maslyuchenko, Mykhaylyuk and the second named author \cite{MMP} that every order-to-norm continuous AM-compact (linear) operator is narrow (see also \cite[Theorem~10.17]{PR}). Firstly, we consider orthogonally additive operators which is more general than linear operators. Secondly, laterally-to-norm continuity is weaker than order-to-norm continuity. Finally, C-compactness is weaker than the AM-compactness.

Recall that a net $(x_\alpha)$ in a vector lattice $E$ \textit{laterally converges} to $x \in E$ if $x_\alpha \sqsubseteq x_\beta \sqsubseteq x$ for all indices $\alpha < \beta$ and $x_\alpha \overset{\rm o}\longrightarrow x$. In this case we write $x_\alpha \overset{\rm lat}\longrightarrow x$. For positive elements $x_\alpha, x$ the condition $x_\alpha \overset{\rm lat}\longrightarrow x$ means that $x_\alpha \sqsubseteq x$ and $x_\alpha \uparrow x$.

\begin{definition}
Let $E$ be a vector lattice and $X$ a Banach space. An orthogonally additive operator $T:E\to X$ is called:
\begin{enumerate}
  \item \textit{order-to-norm} continuous if $T$ sends order convergent nets in $E$ to  norm convergent nets in $X$;
  \item \textit{laterally-to-norm} continuous provided $T$ sends laterally convergent nets in $E$ to norm convergent nets in $X$;
  \item \textit{AM-compact} if for every order bounded set $M \subset E$ its image $T(M)$ is a relatively compact set in $X$;
  \item \textit{C-compact} if the set  $\{T(y): \, y \sqsubseteq x \}$ is relatively compact in $X$ for every $x\in E$.
\end{enumerate}
\end{definition}

Since the order convergence implies the lateral convergence of a net, order-to-norm continuity of a map yields its laterally-to-norm continuity. The converse is not true: the abstract Uryson operator $T$ from Example~\ref{Ex3} is laterally-to-norm continuous if $\nu$ is a countably additive measure but not order-to-norm continuous if $\nu \neq 0$.

{\begin{example}
Let $([0,1],\Sigma_{1},\mu)$ and $([0,1],\Sigma_{2},\nu)$ be two measure spaces, $E = C[0,1]$, which is a sublattice of $L_\infty(\mu)$, and $F = L_{\infty}(\nu)$. Consider the integral Uryson operator $T:E\to F$ with the kernel $K(s,t,r) = \mathbf{1}_{[0,1]}(t) \mathbf{1}_{[0,1]}(s) |r|$. Since the interval $[0,1]$
is connected, every numerical continuous function $f: [0,1] \to \mathbb R$ is an atom, that is, $f$ has no nonzero fragment and therefore $\{T(g): \, g \sqsubseteq f\}$ is a relatively compact set in $F$ for every $f\in E$. Take $u(t) = \mathbf{1}_{[0,1]}(t)$ and consider the order bounded set $D=\{f\in E:\,|f|\leq u\}$ in $E$. Then we have
$$
T(f)(s) = \int_0^1 \mathbf{1}_{[0,1]}(t) \mathbf{1}_{[0,1]}(s)|f(t)| \, d\mu(t) = \mathbf{1}_{[0,1]}(s) \int_0^1 |f(t)|\,d\mu(t).
$$
Observe that $T(D)$ is not relatively compact in $F$. Therefore $T$ is $C$-compact, but not $AM$-compact.
\end{example}}

Remark that a C-compact abstract Uryson operator $T:E \to F$ between Banach lattices $E$, $F$ with $F$  $\sigma$-Dedekind complete is AM-compact if, in addition, $T$ is uniformly continuous on order bounded subsets of $E$ \cite[Theorem~3.4]{Maz-2}.

The main result of the section is the following theorem.

\begin{thm} \label{thm:ourmain}
Let $E$ be an atomless Dedekind complete vector lattice and $F$ a Banach space. Then every orthogonally additive laterally-to-norm continuous $C$-compact operator $T: E \to F$ is narrow.
\end{thm}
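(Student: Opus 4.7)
By Proposition~\ref{pr:wonder}, it suffices to show $T$ is narrow at every $e \in E^+$. Fix such an $e$ and $\varepsilon > 0$; the plan is to prove that $\tfrac{1}{2}T(e)$ lies in the norm-closure of
\[
K_e := \{T(y) : y \sqsubseteq e\},
\]
which is relatively compact by $C$-compactness. Once this is done, any $y_0 \sqsubseteq e$ with $\|T(y_0) - \tfrac{1}{2}T(e)\| < \varepsilon/2$ yields, via the orthogonal additivity identity $T(y_0) - T(e - y_0) = 2T(y_0) - T(e)$, the $MC$ decomposition $e = y_0 \sqcup (e - y_0)$ with $\|T(y_0) - T(e - y_0)\| < \varepsilon$.

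The route to $\tfrac{1}{2}T(e) \in \overline{K_e}$ I would pursue goes through vector-measure theory. Define $\mu(y) := T(y)$ on the Boolean algebra $\mathfrak{F}_e$ of fragments of $e$, which is complete thanks to Dedekind completeness of $E$. Orthogonal additivity of $T$ gives finite additivity of $\mu$; the fact that finite partial unions of any pairwise disjoint family $(y_n)$ converge laterally to $\bigsqcup_n y_n$, combined with laterally-to-norm continuity, upgrades this to norm-$\sigma$-additivity of $\mu$; and $C$-compactness makes the range $\mu(\mathfrak{F}_e) = K_e$ relatively compact. Most importantly, $\mu$ is non-atomic: if some $y^* \in \mathfrak{F}_e$ were an atom (so $\mu(z) \in \{0,\mu(y^*)\}$ for every $z \sqsubseteq y^*$), then for any $z \sqsubseteq y^*$ with $\mu(z) = \mu(y^*)$ the atomlessness of $E$ would furnish a proper splitting $z = u \sqcup v$ and hence a proper fragment $z' \in \{u,v\}$ with $\mu(z') = \mu(y^*)$; iterating this, and using Dedekind completeness together with a two-sided form of laterally-to-norm continuity at limit steps (to obtain $\mu(\inf_\alpha z_\alpha) = \mu(y^*)$ by passing from a descending chain $z_\alpha \downarrow z_\lambda$ to the auxiliary ascending lateral chain $z_0 - z_\alpha \uparrow z_0 - z_\lambda$), one extracts a transfinitely strictly $\sqsubseteq$-decreasing chain in $\mathfrak{F}_{y^*}$, impossible since $\mathfrak{F}_{y^*}$ is a set.

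Rather than invoking a Lyapunov--Kluv\'{a}nek theorem on $\mathfrak{F}_e$ in full generality, I would reduce to the classical finite-dimensional Lyapunov theorem. For any $\eta > 0$, relative compactness of $K_e$ supplies a finite-rank projection $P : F \to F$ with $\|Pk - k\| < \eta$ for all $k \in K_e$; the same argument shows $P \mu$ is an atomless, countably additive $\mathbb{R}^d$-valued vector measure (one may work inside a Loomis--Sikorski representation of $\mathfrak{F}_e$ as a $\sigma$-algebra modulo a null ideal, if one prefers to apply Lyapunov in its standard set-theoretic form). The classical Lyapunov theorem then produces $y_0 \sqsubseteq e$ with $PT(y_0) = \tfrac{1}{2}PT(e)$, and the triangle inequality gives $\|T(y_0) - \tfrac{1}{2}T(e)\| \leq \tfrac{3\eta}{2}$; taking $\eta < \varepsilon/3$ completes the proof.

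The main obstacle I anticipate is the verification that $\mu$ is non-atomic. The transfinite descent argument above—which combines atomlessness of $E$, Dedekind completeness (for taking infima of descending transfinite chains), and the two-sided form of laterally-to-norm continuity of the orthogonally additive map $T$—is the genuinely new nonlinear step and requires care. Once this is in place, the measure-theoretic bookkeeping (finite additivity, norm-$\sigma$-additivity, relative compactness of the range) and the finite-rank reduction are essentially routine, and the classical finite-dimensional Lyapunov theorem closes out the proof.
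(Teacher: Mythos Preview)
Your approach is correct and genuinely different from the paper's. Both arguments share the finite-rank reduction via $C$-compactness, but diverge in the finite-dimensional step. The paper builds a disjoint tree on $e$ in which each split produces children with equal image norm (a Zorn's-lemma ``balance'' argument, Lemma~\ref{le:4.5}), shows that the maximum of $\|T(e_i)\|$ at level $m$ tends to $0$ (Lemma~\ref{le:iiyugte678}), and then applies the Kadets--Kadets rounding-off-coefficients lemma (Lemma~\ref{le:rounding}) to select signs; this is elementary and self-contained. You instead regard $T|_{\mathfrak{F}_e}$ as a countably additive vector measure, establish non-atomicity by a transfinite descent (using atomlessness of $E$, Dedekind completeness, and the complement trick $z_\alpha \mapsto z_0 - z_\alpha$ to handle limit ordinals via laterally-to-norm continuity), and invoke Lyapunov's convexity theorem to hit $\tfrac{1}{2}PT(e)$ exactly --- more conceptual, but reliant on heavier external machinery. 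Two minor repairs are needed: your ``finite-rank projection $P:F\to F$'' presupposes the approximation property, which is not assumed --- embed $F$ isometrically into $\ell_\infty(D)$ first (as the paper does) and take a finite-rank \emph{operator} there; and since $\mathfrak{F}_e$ is a complete Boolean algebra rather than a $\sigma$-algebra of sets, the Loomis--Sikorski passage you mention is genuinely needed before applying the classical Lyapunov theorem (non-atomicity does transfer to the lifted measure, but this should be verified). Note also that Proposition~\ref{pr:wonder} requires narrowness at every $e\in E^+\cup E^-$, not just $E^+$; your argument, however, works verbatim for any $e$, so this is only a phrasing issue.
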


The proof of Theorem~\ref{thm:ourmain} mainly repeats the proof of \cite[Theorem~5.1]{MMP}. However, now we deal with nonlinear maps, and thus we need to follow it carefully. The following lemma is known as the lemma on rounding off coefficients \cite[p.~14]{Kad-1}.

\begin{lemma} \label{le:rounding}
Let  $(x_{i})_{i=1}^{n}$ be a finite collection of vectors in a finite dimensional normed space  $F$ and let $(\lambda_{i})_{i=1}^{n}$ be a collection of reals with $0\leq\lambda_{i}\leq 1$ for each $i$. Then there exists a collection $(\theta_{i})_{i=1}^{n}$ of numbers $\theta_{i}\in\{0,1\}$ such that
$$
\Big\|\sum_{i=1}^{n}(\lambda_{i}-\theta_{i}) \, x_{i}\Big\|\leq\frac{\text{\rm dim} \, X}{2}\max_{i}\|x_{i}\|.
$$
\end{lemma}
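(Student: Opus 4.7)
The plan is to combine a convexity/dimension reduction with rounding of the surviving fractional coefficients. Write $d=\dim F$ and $p=\sum_{i=1}^{n}\lambda_i x_i$.

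First, I would consider the compact convex set
\[
K=\Bigl\{(\lambda'_i)_{i=1}^{n}\in[0,1]^{n}:\ \sum_{i=1}^{n}\lambda'_i x_i=p\Bigr\},
\]
which is nonempty because it contains $(\lambda_i)$. Since $K$ is a nonempty compact convex polytope, it has an extreme point $(\lambda'_i)\in K$. The key linear-algebra observation is that at such an extreme point, the set $I=\{i:\lambda'_i\in(0,1)\}$ must have the property that the family $(x_i)_{i\in I}$ is linearly independent in $F$; indeed, if $\sum_{i\in I}\mu_i x_i=0$ for some nonzero tuple $(\mu_i)_{i\in I}$, then for small $|t|>0$ the tuple obtained from $(\lambda'_i)$ by adding $t\mu_i$ on $I$ (and $0$ elsewhere) still lies in $K$, contradicting extremality. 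Hence $|I|\leq d$.

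Second, define
\[
\theta_i=\begin{cases}\lambda'_i & \text{if }\lambda'_i\in\{0,1\},\\ \text{the nearest integer to }\lambda'_i & \text{if }i\in I,\end{cases}
\]
so that $\theta_i\in\{0,1\}$ and $|\lambda'_i-\theta_i|\leq 1/2$ for $i\in I$, while $\lambda'_i=\theta_i$ for $i\notin I$. Using $\sum_{i=1}^{n}\lambda_i x_i=\sum_{i=1}^{n}\lambda'_i x_i$, I can write
\[
\sum_{i=1}^{n}(\lambda_i-\theta_i)x_i=\sum_{i=1}^{n}(\lambda'_i-\theta_i)x_i=\sum_{i\in I}(\lambda'_i-\theta_i)x_i,
\]
and the triangle inequality gives
\[
\Bigl\|\sum_{i=1}^{n}(\lambda_i-\theta_i)x_i\Bigr\|\leq\sum_{i\in I}|\lambda'_i-\theta_i|\,\|x_i\|\leq\frac{|I|}{2}\max_i\|x_i\|\leq\frac{d}{2}\max_i\|x_i\|,
\]
which is exactly the desired estimate.

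The only subtle step is the extreme-point argument showing $|I|\leq d$; everything else is routine rounding and the triangle inequality. An equivalent, constructive variant (which avoids invoking extreme points of a polytope) is to start from $(\lambda_i)$ and, as long as $|I|>d$, pick a nontrivial relation $\sum_{i\in I}\mu_i x_i=0$ and move $\lambda'_i\mapsto\lambda'_i+t\mu_i$ on $I$ with $t$ chosen of maximal absolute value so that some coordinate reaches $\{0,1\}$; each such step strictly decreases $|I|$ while preserving $\sum\lambda'_i x_i=p$, and after finitely many steps one arrives at the same situation $|I|\leq d$.
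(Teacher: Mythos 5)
Your proof is correct: the extreme-point argument showing that the fractionally-weighted indices correspond to a linearly independent family (hence at most $\dim F$ of them), followed by rounding each surviving coefficient to the nearest integer and applying the triangle inequality, is exactly the classical proof of the rounding-off lemma. The paper itself gives no proof of this statement --- it only cites it as a known result from Kadets--Kadets \cite[p.~14]{Kad-1} --- and your argument is essentially the one found in that source, so there is nothing to compare beyond noting that your write-up is a complete and valid substitute for the citation. (Minor cosmetic points: the ``nearest integer'' is ambiguous when $\lambda'_i=\tfrac12$, but either choice works; and the $X$ in the statement's bound is a typo for $F$.)
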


\begin{lemma} \label{le:fkkkiryu8}
Let $E$ be an atomless Dedekind complete vector lattice, $F$ a Banach space, $T: E \to F$ an orthogonally additive laterally-to-norm continuous operator. If $e\in E$, $|e_{n}|\leq|e|$ and $e_{n}\bot e_{m}$ for each integers $n\neq m$ then
$\lim_{n\to\infty} \|T (e_n)\| = 0$.
\end{lemma}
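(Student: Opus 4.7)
The plan is to exhibit a laterally convergent net built from partial sums and then invoke orthogonal additivity together with laterally-to-norm continuity to deduce that the series $\sum_n T(e_n)$ converges in $F$, which forces its terms to go to zero.

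First I would set up the candidate lateral limit. Since $e_n\bot e_m$ implies $|e_n|\bot|e_m|$, the positive elements $|e_n|$ are pairwise disjoint, so $s_N:=\sum_{n=1}^N|e_n|=\bigvee_{n=1}^N|e_n|\le|e|$. By Dedekind completeness of $E$ the supremum $s=\sup_N s_N$ exists. The same argument applied separately to $e_n^+$ and $e_n^-$ (which are all pairwise disjoint and bounded by $|e|$) produces $p=\sup_N\sum_{n=1}^N e_n^+$ and $q=\sup_N\sum_{n=1}^N e_n^-$ with $p\bot q$.

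Next I would set $t_N:=\sum_{n=1}^N e_n$ and $t:=p-q$, and verify that $t_N\overset{\rm lat}\longrightarrow t$. Disjointness gives $t_N^+=\sum_{n=1}^N e_n^+$ and $t_N^-=\sum_{n=1}^N e_n^-$, so $t_N^+\uparrow p$ and $t_N^-\uparrow q$, hence $t_N\overset{\rm o}\longrightarrow t$. To see $t_N\sqsubseteq t_{N+1}$, note that $t_{N+1}=t_N+e_{N+1}$ with $t_N\bot e_{N+1}$. To see $t_N\sqsubseteq t$, write
\[
t-t_N=(p-t_N^+)-(q-t_N^-),
\]
so $|t-t_N|\le(p-t_N^+)+(q-t_N^-)$; each $e_n^{\pm}$ with $n>N$ is disjoint from every $e_m$ with $m\le N$, and passing to the supremum (which is a band operation) yields $p-t_N^+\bot|t_N|$ and $q-t_N^-\bot|t_N|$, so $|t-t_N|\wedge|t_N|=0$, i.e.\ $t_N\sqsubseteq t$.

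Finally, laterally-to-norm continuity gives $T(t_N)\to T(t)$ in $F$, while orthogonal additivity (applied inductively to the disjoint sum $t_N=e_1\sqcup\cdots\sqcup e_N$) gives $T(t_N)=\sum_{n=1}^N T(e_n)$. Thus the series $\sum_n T(e_n)$ is norm-convergent in the Banach space $F$, and in particular $\|T(e_n)\|\to 0$, which is the desired conclusion.

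The one step I expect to need the most care is the verification of $t_N\sqsubseteq t$: one must confirm that the disjointness relations $e_n^{\pm}\bot e_m$ survive the passage to the order-limits $p$ and $q$. This is where Dedekind completeness and the fact that the disjoint complement of a set is a band are essential; everything else is a straightforward combination of orthogonal additivity with the definition of lateral convergence.
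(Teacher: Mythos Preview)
Your proposal is correct and follows essentially the same approach as the paper: form the partial sums $t_N=\sum_{n\le N}e_n$, show they laterally converge, apply laterally-to-norm continuity, and conclude via orthogonal additivity that $\|T(e_n)\|\to 0$. The only difference is that you spell out the verification of $t_N\overset{\rm lat}\longrightarrow t$ (splitting into positive and negative parts and checking each fragment relation), whereas the paper simply asserts this convergence and then reads off $\|T(e_n)\|=\|T(f_n)-T(f_{n-1})\|\to 0$ from the Cauchy property.
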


\begin{proof}
Since $E$ is Dedekind complete, the sequence $f_{n}=\sum_{k=1}^{n}e_{k}$ laterally
converges to $f=\sum_{k=1}^{\infty}e_{k}$
Then the laterally-to-norm continuity of $T$ implies that $Tf_{n}$
converges to $Tf$ in F. The sequence $(T(f_n))_{n=1}^\infty$ is fundamental, hence
$$
\|T(f_n) - T(f_{n-1}) \| = \Bigl\|T\Big(\sum_{k=1}^{n}e_{k}\Big)-T \Big(\sum_{k=1}^{n-1}e_{k}\Big) \Bigr\|= \|T(e_n) \|
$$
implies $\lim_{n\to\infty} \|T (e_n)\| = 0$.
\end{proof}

\begin{lemma} \label{le:4.5}
Let $E$ be an atomless Dedekind complete vector lattice, $F$ a Banach space, $T: E \to F$ an orthogonally additive laterally-to-norm continuous operator, $e\in E$. Then there exist $MC$ fragments $e_{1},e_{2}$ of $e$ such that $\|T(e_1)\| = \|T(e_2) \|$.
\end{lemma}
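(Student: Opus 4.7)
My plan is to construct, by Zorn's lemma, a maximal chain $C$ in the Boolean algebra of fragments $(\mathfrak{F}_e, \sqsubseteq)$ containing $0$ and $e$, and then apply an intermediate value argument along $C$ to the real-valued function $\phi: C \to \mathbb{R}$ given by $\phi(y) = \|T(y)\| - \|T(e-y)\|$. Two structural properties of $C$ are needed. First, $C$ is order-complete as a linearly ordered set: for any subchain $D \subseteq C$, the supremum and infimum (with respect to $\sqsubseteq$) exist in $\mathfrak{F}_e$ by Dedekind completeness of $E$, and belong to $C$ by maximality. Second, $C$ has no gaps: if $y_1 \sqsubsetneq y_2$ were adjacent in $C$, then the nonzero fragment $y_2 - y_1$ would admit a proper fragment $z$ by atomlessness of $E$, and $y_3 := y_1 + z$ would satisfy $y_1 \sqsubsetneq y_3 \sqsubsetneq y_2$; adjacency of $y_1, y_2$ forces every $c \in C$ to satisfy $c \sqsubseteq y_1$ or $c \sqsupseteq y_2$, so $y_3$ is comparable with every $c \in C$, contradicting maximality. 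Consequently, equipped with the order topology, $C$ is a connected linearly ordered space (a classical result for Dedekind complete, densely ordered chains with endpoints).

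Next, I verify that $\phi$ is continuous on $C$. By a standard argument (in a Dedekind complete, densely ordered chain, order-topological continuity is equivalent to continuity along monotone nets), it suffices to check $\phi(y_\alpha) \to \phi(y_0)$ for every monotone net $y_\alpha \uparrow y_0$ or $y_\alpha \downarrow y_0$ in $C$. For $y_\alpha \uparrow y_0$: from $y_\alpha \sqsubseteq y_0$ we have $y_0 = y_\alpha \sqcup (y_0 - y_\alpha)$, whence $|y_0 - y_\alpha| = |y_0| - |y_\alpha| \downarrow 0$, so $y_\alpha \overset{\rm lat}\longrightarrow y_0$ and laterally-to-norm continuity of $T$ gives $T(y_\alpha) \to T(y_0)$ in norm. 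The complements $e - y_\alpha$ form a $\sqsubseteq$-decreasing net, which is outside the literal scope of lateral convergence. To circumvent this, fix $\alpha_0$ and, for $\beta \geq \alpha_0$, use the disjoint decompositions $y_0 - y_{\alpha_0} = (y_\beta - y_{\alpha_0}) \sqcup (y_0 - y_\beta)$ and $e - y_{\alpha_0} = (y_\beta - y_{\alpha_0}) \sqcup (e - y_\beta)$; since $y_\beta - y_{\alpha_0} \overset{\rm lat}\longrightarrow y_0 - y_{\alpha_0}$ as $\beta \to \infty$, orthogonal additivity combined with laterally-to-norm continuity forces $T(y_0 - y_\beta) \to 0$, and hence $T(e - y_\beta) \to T(e - y_0)$. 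The case $y_\alpha \downarrow y_0$ is symmetric.

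Finally, $\phi(0) = -\|T(e)\|$ and $\phi(e) = \|T(e)\|$ since $T(0) = 0$ by orthogonal additivity, so the continuous image $\phi(C) \subseteq \mathbb{R}$ is a connected set containing both $\pm \|T(e)\|$, hence an interval containing $0$. Any $e_1 \in \phi^{-1}(0)$ together with $e_2 := e - e_1$ supplies MC fragments of $e$ with $\|T(e_1)\| = \|T(e_2)\|$. The main obstacle I anticipate is the continuity of $\phi$ along $\sqsubseteq$-decreasing chains: the stated laterally-to-norm continuity of $T$ applies directly only to laterally (hence $\sqsubseteq$-increasing) convergent nets, so the orthogonal-additive detour described above is the essential technical step.
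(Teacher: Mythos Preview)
Your proof is correct and follows a genuinely different route from the paper's. The paper argues more directly: starting from an arbitrary decomposition $e = e_1 \sqcup e_2$ with $\|T(e_1)\| > \|T(e_2)\|$, it applies Zorn's lemma to the set $D = \{g \sqsubseteq e_1 : \|T(e_1 - g)\| \geq \|T(e_2 + g)\|\}$ (ordered by $\sqsubseteq$) to obtain a maximal $g_0$, and then shows $\|T(e_1 - g_0)\| = \|T(e_2 + g_0)\|$ by a perturbation argument: were the difference $\alpha > 0$, atomlessness together with Lemma~\ref{le:fkkkiryu8} would furnish a nonzero fragment $f \sqsubseteq e_1 - g_0$ with $\|T(f)\|$ small enough that transferring $f$ from $e_1 - g_0$ to $e_2 + g_0$ preserves the inequality, contradicting maximality. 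Your approach trades this perturbation step for topology: you build a maximal chain in $\mathfrak{F}_e$, show it is a connected linearly ordered space (Dedekind completeness plus density, the latter forced by atomlessness), and invoke the intermediate value theorem for the continuous function $\phi(y) = \|T(y)\| - \|T(e-y)\|$. Your method is conceptually clean and self-contained in that it bypasses Lemma~\ref{le:fkkkiryu8}; the paper's argument is shorter and more elementary, avoiding the order-topological machinery and the somewhat delicate handling of $\sqsubseteq$-decreasing nets that you correctly identify as the main technical point.
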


\begin{proof}
Fix any $MC$ fragments $e_1, e_2$ of $e$. If $\|T(e_1)\| = \|T(e_2) \|$ then there is nothing to prove. With no loss of generality we may and do assume that $\|T(e_1)\| - \|T(e_2)\| > 0$. Consider the partially ordered set
$$
D = \{g \sqsubseteq e_1: \, \|T(e_1 - g)\|-\|T(e_{2}+g)\|\geq 0\}
$$
where $g_{1} \leq g_{2}$ if and only if $g_{1}\sqsubseteq g_{2}$. If $B \subseteq D$ is a chain then $g^{\star}=\vee B\in D$ by the laterally-to-norm continuity of $T$. By the Zorn lemma, there is a maximal element $g_{0}\in D$. Now we show that $\|T(e_{1}-g_{0})\|-\|T(e_{2}+g_{0})\|=0$. Suppose on the contrary that
$$
\alpha=\|T(e_{1}-g_{0})\|-\|T(e_{2}+g_{0})\|> 0.
$$
Since $E$ is atomless, we can choose  a fragment $0\neq f\sqsubseteq(e_{1}-g_{0})$ such that $\|T(f)\|<\frac{\alpha}{4}$ and $\|T(-f)\|<\frac{\alpha}{4}$ . Since $g_{0}\bot f$, $g_{0}+f\sqsubseteq e_{1}$  we have
\begin{align*}
&\|T(e_{1}-g_{0}-f)\| - \|T(e_{2}+g_{0}+f)\| = \\
&\|T(e_{1}-g_{0})+T(-f)\| - \|T(e_{2}+g_{0})+T(f)\| \geq \\
&\|T(e_{1}-g_{0})\| - \|T(-f)\| + \|T(e_2 + g_0)\| - \|T(f)\| > \frac{\alpha}{2} > 0,
\end{align*}
that contradicts the maximality of $g_{0}$.
\end{proof}

\begin{lemma} \label{le:iiyugte678}
Let $E$ be an atomless Dedekind complete vector lattice, $F$ a Banach space, $T: E \to F$ an orthogonally additive laterally-to-norm continuous operator, $e\in E$ and $(e_{n})_{n=1}^{\infty}$ be a disjoint tree on $e$. If $\|T(e_{2n})\| = \|T(e_{2n+1})\|$ for every $n\geq 1$ then
$$
\lim_{m \to \infty} \gamma_m = 0, \,\,\,\,\, \mbox{where} \,\,\,\,\, \gamma_m = \max\limits_{2^{m}\leq i<2^{m+1}} \|T(e_i)\|.
$$
\end{lemma}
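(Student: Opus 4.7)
The plan is to reduce the level-max statement to a one-path statement, and then extract from any $\varepsilon$-bad sequence of level-maxima either a pairwise-disjoint subsequence (ruled out by Lemma~\ref{le:fkkkiryu8}) or an infinite descending chain (ruled out by the path statement).

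First, I will establish the auxiliary claim that for every infinite downward path $n_{0}=1,n_{1},n_{2},\ldots$ in the binary tree (with $n_{k+1}\in\{2n_{k},2n_{k}+1\}$) one has $\|T(e_{n_{k}})\|\to 0$. Letting $s_{k}$ denote the sibling of $n_{k+1}$, the decomposition $e_{n_{k}}=e_{n_{k+1}}\sqcup e_{s_{k}}$ makes $(e_{s_{k}})_{k\geq 0}$ a pairwise disjoint family of fragments of $e$ (indeed, for $j>k$ one has $e_{s_{j}}\sqsubseteq e_{n_{k+1}}\perp e_{s_{k}}$) with $|e_{s_{k}}|\leq|e|$. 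Lemma~\ref{le:fkkkiryu8} forces $\|T(e_{s_{k}})\|\to 0$, and the hypothesis $\|T(e_{2n})\|=\|T(e_{2n+1})\|$ applied with $n=n_{k}$ yields $\|T(e_{n_{k+1}})\|=\|T(e_{s_{k}})\|$, so indeed $\|T(e_{n_{k+1}})\|\to 0$. This is the only place where the sibling-equality hypothesis is genuinely used, and I expect it to be the main conceptual step of the proof; everything afterwards is purely combinatorial.

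Assuming $\gamma_{m}\not\to 0$, I pick $\varepsilon>0$, levels $m_{1}<m_{2}<\cdots$, and nodes $i_{k}\in A_{m_{k}}$ with $\|T(e_{i_{k}})\|\geq\varepsilon$. I then note the standard tree dichotomy: any two distinct nodes of the binary tree are either \emph{comparable} (one a descendant of the other, in which case the corresponding fragments are nested) or \emph{incomparable} (fragments pairwise disjoint, shown by an easy induction on levels). Applying the infinite Ramsey theorem to the $2$-colouring of pairs $\{(i_{k},i_{l}):k<l\}$ by this dichotomy, I extract an infinite monochromatic subsequence $(i_{k_{j}})_{j\geq 1}$.

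In the pairwise-incomparable case, $(e_{i_{k_{j}}})$ is a pairwise disjoint sequence of fragments of $e$, so Lemma~\ref{le:fkkkiryu8} forces $\|T(e_{i_{k_{j}}})\|\to 0$, contradicting the uniform lower bound $\varepsilon$. In the pairwise-comparable case, the strictly increasing levels $m_{k_{j}}$ force $i_{k_{j+1}}$ to be a proper descendant of $i_{k_{j}}$; this infinite descending chain extends to an infinite downward path $n_{0}=1,n_{1},n_{2},\ldots$ passing through each $i_{k_{j}}$, so the auxiliary claim gives $\|T(e_{n_{k}})\|\to 0$, again contradicting $\|T(e_{n_{m_{k_{j}}}})\|=\|T(e_{i_{k_{j}}})\|\geq\varepsilon$. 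Either way we reach a contradiction, so $\gamma_{m}\to 0$.
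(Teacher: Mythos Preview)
Your proof is correct and takes a genuinely different route from the paper's.

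The paper argues by an explicit construction: setting $\varepsilon = \limsup_m \gamma_m > 0$ and, for each node $n$, $\varepsilon_n = \limsup_m \max\{\|T(e_i)\|: e_i \sqsubseteq e_n,\ 2^m \le i < 2^{m+1}\}$, it observes that $\max_{2^m \le i < 2^{m+1}} \varepsilon_i = \varepsilon$ at every level, and then inductively picks, at a suitable level $m_j$, a node $i_j$ with $\varepsilon_{i_j} = \varepsilon$ (so that the recursion can continue inside the subtree below $e_{i_j}$) together with a \emph{different} node $n_j$ at the same level with $\|T(e_{n_j})\| \ge \varepsilon/2$, the latter obtained via the sibling equality. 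The resulting $(e_{n_j})$ are pairwise disjoint, contradicting Lemma~\ref{le:fkkkiryu8}.

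Your argument instead isolates the sibling hypothesis into a clean preliminary statement (norms along any infinite root-path tend to zero) and then invokes the infinite Ramsey theorem on the comparable/incomparable dichotomy to reduce an $\varepsilon$-bad sequence of nodes to either an infinite antichain (killed directly by Lemma~\ref{le:fkkkiryu8}) or an infinite chain, which embeds in a single root-path and is killed by the path claim. This is conceptually more transparent---the role of the sibling equality is confined to one short step---at the price of appealing to Ramsey's theorem, which the paper's hands-on construction avoids.
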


\begin{proof}
Set $\varepsilon=\limsup_{m\to\infty}\gamma_{m}$ and prove that $\varepsilon = 0$, which will be enough for the proof. Suppose on the contrary that $\varepsilon>0$. Then for each $n\in\Bbb{N}$ we set
$$
\varepsilon_{n}=\limsup\limits_{m\to\infty}\max\limits_{2^{m}\leq i<2^{m+1},\,e_{i}\sqsubseteq e_{n}} \|T(e_i)\|.
$$
Hence, for each $m\in\Bbb{N}$ one has
$$
\max\limits_{2^{m}\leq i<2^{m+1}}\varepsilon_{i}=\varepsilon.\,\,\,\,\,(\star)
$$
Now we are going to construct a sequence of mutually disjoint elements $(e_{n_{j}})_{j=1}^{\infty}$ such that $\|T(e_{n_{j}})\| \geq \frac{\varepsilon}{2}$, that is impossible by Lemma~\ref{le:fkkkiryu8}. At the first step we choose $m_{1}$ so that $\max\limits_{2^{m_{1}}\leq i<2^{m_{1}+1}}\|T(e_i)\|\geq\frac{\varepsilon}{2}$. By $(\star)$, we choose $i_1$, $2^{m_{1}}\leq i_{1}<2^{m_{1}+1}$ so that $\varepsilon_{i_1}=\varepsilon$. Using $\|T(e_{2n})\| = \|T(e_{2n+1})\|$, we choose $n_1\neq i_1$, $2^{m_{1}}\leq n_1 <2^{m_{1}+1}$ so that $\|T(e_{n_{1}})\|\geq\frac{\varepsilon}{2}$. At the second step we choose $m_{2}>m_{1}$ so that
$$
\max\limits_{2^{m}\leq i<2^{m+1},\,e_i \sqsubseteq e_{i_1}}\|T(e_i)\|\geq\frac{\varepsilon}{2}.
$$
By $(\star)$, we choose $i_{2}$, $2^{m_{2}}\leq i_{2}<2^{m_{2}+1}$ so that $\varepsilon_{i_{2}}=\varepsilon$. Then we choose $m_{2}\neq i_{2}$, $2^{m_{2}}\leq i_{2}<2^{m_{2}+1}$ so that $\|T(e_{m_2})\|\geq\frac{\varepsilon}{2}$. Proceeding further, we construct the desired sequence. Indeed, $\|T(e_{m_i})\|\geq\frac{\varepsilon}{2}$ by the construction and the mutual disjointness for $e_{m_{l}},e_{m_{j}}$, $j\neq l$ is guaranteed by the condition $m_{j}\neq i_{j}$, because the elements $e_{m_{j+l}}$ are fragments of $e_{i_{j}}$ which are disjoint to $e_{m_{j}}$.
\end{proof}

\begin{lemma} \label{le:w88495737}
Let $E$ be an atomless Dedekind complete vector lattice, $F$ a finite dimensional Banach space. Then every orthogonally additive laterally-to-norm continuous operator $T: E \to F$ is narrow.
\end{lemma}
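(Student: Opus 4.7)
The strategy is to iterate Lemma~\ref{le:4.5} to build a disjoint tree $(e_{n})_{n=1}^{\infty}$ on $e$ with $\|T(e_{2n})\|=\|T(e_{2n+1})\|$ at every split, apply Lemma~\ref{le:iiyugte678} to obtain $\gamma_{m}\to 0$, and then use the rounding-off Lemma~\ref{le:rounding} to bundle the level-$m$ leaves into two MC fragments of $e$ whose $T$-images nearly cancel.

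More concretely, fix $e\in E$ and $\varepsilon>0$. Starting from $e_{1}=e$, at each stage I apply Lemma~\ref{le:4.5} to $e_{n}$ to split it as $e_{n}=e_{2n}\sqcup e_{2n+1}$ with $\|T(e_{2n})\|=\|T(e_{2n+1})\|$. By induction, at level $m$ one has the orthogonal decomposition $e=\bigsqcup_{i=2^{m}}^{2^{m+1}-1}e_{i}$, so orthogonal additivity of $T$ yields $T(e)=\sum_{i=2^{m}}^{2^{m+1}-1}T(e_{i})$. Now apply Lemma~\ref{le:rounding} in $F$ to the vectors $T(e_{i})$ with all coefficients $\lambda_{i}=1/2$ to obtain $\theta_{i}\in\{0,1\}$ with
\[
\Big\|\sum_{i=2^{m}}^{2^{m+1}-1}(\theta_{i}-\tfrac{1}{2})\,T(e_{i})\Big\|\leq\frac{\dim F}{2}\,\gamma_{m}.
\]
Setting $f_{1}=\sum_{\theta_{i}=1}e_{i}$ and $f_{2}=e-f_{1}=\sum_{\theta_{i}=0}e_{i}$ produces MC fragments of $e$, and orthogonal additivity gives $T(f_{1})-T(f_{2})=2\sum_{i}(\theta_{i}-\tfrac{1}{2})T(e_{i})$, hence $\|T(f_{1})-T(f_{2})\|\leq(\dim F)\gamma_{m}$. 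Since $\gamma_{m}\to 0$ by Lemma~\ref{le:iiyugte678}, choosing $m$ large enough yields $\|T(f_{1})-T(f_{2})\|<\varepsilon$ and finishes the proof.

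This plan is essentially a verbatim finite-dimensional transcription of the linear argument from \cite{MMP}. The only place where nonlinearity demands attention is the identity $T(f_{1})+T(f_{2})=T(e)$, which follows from $f_{1}\sqcup f_{2}=e$ and orthogonal additivity of $T$; every other step uses $T$ only through the values $T(e_{i})$ on pieces of the tree. I do not anticipate any genuine obstacle. The role of $\dim F<\infty$ is purely to keep the rounding bound $\tfrac{\dim F}{2}\gamma_{m}$ finite, and this is presumably why the statement is isolated as a separate lemma: in Theorem~\ref{thm:ourmain} the $C$-compactness hypothesis will be used to push the problem into arbitrarily good finite-dimensional subspaces of $F$ where this lemma applies.
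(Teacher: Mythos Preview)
Your proposal is correct and follows essentially the same argument as the paper: build the disjoint tree via Lemma~\ref{le:4.5}, invoke Lemma~\ref{le:iiyugte678} to get $\gamma_m\to 0$, and apply the rounding lemma with all $\lambda_i=\tfrac12$ to split the level-$m$ leaves into two MC fragments. The only differences are cosmetic (your $\theta_i$ versus the paper's $\lambda_i$, and which index set is assigned to $f_1$).
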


\begin{proof}
Fix any $e\in E$ and $\varepsilon>0$. Using Lemma~\ref{le:4.5}, we construct a disjoint tree $(e_{n})$ on $e$ with $\|T(e_{2n})\| = \|T(e_{2n+1})\|$ for all $n\in\Bbb{N}$. By lemma~\ref{le:iiyugte678} we choose $m$ so that $\gamma_m \, {\rm dim} \, F<\varepsilon$. Then using Lemma~\ref{le:rounding}, we choose numbers $\lambda_{i}\in\{0,1\}$ for $i=2^{m},\dots, 2^{m+1}-1$ so that
\begin{equation} \label{eq:855jfg}
\Bigl\|2\sum_{i=2^{m}}^{2^{m+1}-1} \Bigl(\frac{1}{2}-\lambda_{i} \Bigr) \, T(e_i)\Bigr\|
\leq {\rm dim} \, F \max\limits_{2^{m}\leq i<2^{m+1}} \|T(e_i)\|=
\gamma_m \, {\rm dim} \, F<\varepsilon.
\end{equation}

Observe that for $I_1 = \{i = 2^m, \ldots, 2^{m+1}-1: \,\, \lambda_i = 0\}$ and $I_2 = \{i = 2^m, \ldots, 2^{m+1}-1: \,\, \lambda_i = 1\}$ the vectors $f_j = \sum_{i \in I_j} e_i$, $j = 1,2$ are MC fragments of $e$ and by \eqref{eq:855jfg},
$$
\|T(f_1) - T(f_2)\| = \Bigl\|\sum_{i=2^{m}}^{2^{m+1}-1} (1-2\lambda_{i}) \, T(e_i)\Bigr\| < \varepsilon.
$$
\end{proof}

Now we are ready to prove the main result of the section.

\begin{proof}[Proof of Theorem~\ref{thm:ourmain}]
We may consider $F$ as a subspace of some $l_{\infty}(D)$ space
$$
F \hookrightarrow F^{\star\star} \hookrightarrow l_{\infty}(B_{F^{\star}})=l_{\infty}(D)=W.
$$
By the notation $\hookrightarrow$ we mean an isometric embedding. It is well known that if $H$ is a relatively compact subset of $l_{\infty}(D)$ for some infinite set $D$ and $\varepsilon>0$ then there exists a finite rank operator $S\in l_{\infty}(D)$ such that $\|x-Sx\|\leq\varepsilon$ for every $x\in H$ \cite[Lemma~10.25]{PR}. Fix any $e\in E$ and $\varepsilon>0$. Since $T$ is a $C$-compact operator, $K = \{T(g):\,g \,\,\text{is a fragment of}\,\, f \}$ is relatively compact in $X$ and hence, in $W$. By the above, there exist a finite rank linear operator $S\in\mathcal{L}(W)$ such that $\|f- Sf\|\leq\frac{\varepsilon}{4}$ for every $f\in K$. Then $R=S\circ T$ is an orthogonally additive laterally-norm continuous finite rank operator. By Lemma~\ref{le:w88495737}, there exist $MC$ fragments $e_{1},e_{2}$ of $e$ such that $\|R(e_1) - R(e_2)\| < \frac{\varepsilon}{2}$. Thus,
\begin{align*}
&\|T(e_1) - T(e_2) \| \\
&= \|T(e_1) - T(e_2) + S(T(e_1)) - S(T(e_2)) - S(T(e_1)) + S(T(e_2)) \| \\
&=\|T(e_{1}) - T(e_{2}) + R(e_{1}) - R(e_{2}) - S(T(e_{1})) + S(T(e_{2}))\| \\
&\leq\|R(e_{1}) - R(e_{2})\| + \|T(e_{1}) - S(T(e_{1})) - (T(e_{2}) - S(T(e_{2})))\| \\
&<\frac{\varepsilon}{2}+\frac{\varepsilon}{2}=\varepsilon.
\end{align*}
\end{proof}

\section{Domination problem for abstract Uryson narrow operators}
\label{sec5}

In this section we consider a domination problem for the modulus of abstract Uryson operators. In the classical sense, the domination problem can be stated as follows. Let $E$, $F$ be vector lattices, $S,T: E \to F$ linear operators with $0 \leq S \leq T$. Let $\mathcal P$ be some property of linear operators $R: E \to F$, so that $\mathcal P(R)$ means that $R$ possesses $\mathcal P$. Does $\mathcal P(T)$ imply $\mathcal P(S)$? Another version: if $|S| \leq T$, then whether $\mathcal P(T)$ implies $\mathcal P(S)$? See \cite{FHT} for a survey on the domination problem for ``small'' operators.

Let $E,F$ be vector lattices with $E$ atomless, $T: E \to F$ a linear operator, $E_1$ a vector sublattice of $E$, $x_0 \in E_1$ and $\mathfrak{F}_{x_0} \subseteq E_1$. Suppose $S:E_{1}\to F$ is an abstract Uryson operator with $S|_{\mathfrak{F}_{x_{0}}}=T|_{\mathfrak{F}_{x_{0}}}$. Obviously,  $S$ is locally narrow at $x_{0}$ if and only if $T$ is.

\begin{thm} \label{th:narmod}
Let $E,F$ be vector lattices, with $E$ atomless and $F$ an ideal of some order continuous Banach lattices. Then every abstract Uryson operator $T:E\to F$ is order narrow if and only if is $|T|$ is.
\end{thm}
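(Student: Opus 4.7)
The plan handles the two implications of the equivalence separately. By Proposition~\ref{pr:wonder} and orthogonal additivity, it suffices in each direction to verify order narrowness at an arbitrary $e\in E^+$, and by Lemma~\ref{le:5} one may freely pass to the ambient order continuous Banach lattice $G\supseteq F$ whenever convenient.

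For the forward implication ($T$ order narrow $\Rightarrow$ $|T|$ order narrow), the plan is to exploit Theorem~\ref{fjjjjjgg}(3): the net $\sigma_\pi=\sum_{y\in\pi}|T(y)|$, indexed by $\pi\in\Pi_e$, increases to $|T|(e)$. For each $\pi=\{y_1,\dots,y_n\}$, order narrowness of $T$ at each $y_i$ produces MC decompositions $y_i=a_i^\gamma\sqcup b_i^\gamma$ with $T(a_i^\gamma)-T(b_i^\gamma)\overset{\rm o}\to 0$. Combined with $T(a_i^\gamma)+T(b_i^\gamma)=T(y_i)$ this forces $|T(a_i^\gamma)|\overset{\rm o}\to |T(y_i)|/2$, and hence $\sum_i|T(a_i^\gamma)|\overset{\rm o}\to\sigma_\pi/2$. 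Setting $f_{\pi,\gamma}=\bigsqcup_i a_i^\gamma$ and $g_{\pi,\gamma}=\bigsqcup_i b_i^\gamma$, the lower bound $|T|(f_{\pi,\gamma})\geq\sum_i|T(a_i^\gamma)|$ from Theorem~\ref{fjjjjjgg}(3), together with orthogonal additivity of $|T|$ and the triangle inequality $\sum_i|T(a_i^\gamma)|+\sum_i|T(b_i^\gamma)|\geq\sigma_\pi$, yields the sandwich
$$
\sum_i|T(a_i^\gamma)|\leq|T|(f_{\pi,\gamma})\leq\sum_i|T(a_i^\gamma)|+\bigl(|T|(e)-\sigma_\pi\bigr),
$$
which along the diagonal net forces $|T|(f_{\pi,\gamma})\overset{\rm o}\to|T|(e)/2$, i.e., order narrowness of $|T|$ at $e$.

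For the converse implication, the plan is first to iteratively bisect $e$ using $|T|$-narrowness at each fragment so as to produce, for every $n$, a partition $e=\bigsqcup_{i=1}^{2^n}e_i^{(n)}$ refining the previous level in which each bisection approximately halves the $|T|$-value of its parent; with careful selection from the defining nets one arranges $\max_i|T|(e_i^{(n)})\overset{\rm o}\to 0$. Consequently $|T(e_i^{(n)})|\leq|T|(e_i^{(n)})$ is uniformly order-small. Second, choose signs $\epsilon_i^{(n)}\in\{-1,+1\}$ so that $\sum_i\epsilon_i^{(n)}T(e_i^{(n)})\overset{\rm o}\to 0$; the MC decomposition $e=f^{(n)}\sqcup g^{(n)}$ with $f^{(n)}=\bigsqcup_{\epsilon_i^{(n)}=+1}e_i^{(n)}$ then satisfies $T(f^{(n)})-T(g^{(n)})=\sum_i\epsilon_i^{(n)}T(e_i^{(n)})$. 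The sign selection mimics the scheme of Theorem~\ref{thm:ourmain}: the order continuity of the norm of $G$ converts order-smallness of the $|T|(e_i^{(n)})$ into uniform norm-smallness of the $T(e_i^{(n)})$; embedding $G\hookrightarrow\ell_\infty(B_{G^*})$ and approximating the finitely many relevant values by a finite-rank compression reduces the sign choice to the rounding-off-coefficients Lemma~\ref{le:rounding}. Since all resulting partial sums lie in the fixed order interval $[-|T|(e),|T|(e)]$, norm smallness of the chosen signed sums promotes to order smallness in $G$, and hence in $F$.

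The principal obstacle is the converse direction: $|T|$-narrowness controls only moduli, whereas the conclusion demands genuine cancellation in $T$ itself. Bridging this gap requires combining the modulus-level iterative refinement with a finite-dimensional sign-selection step, the latter crucially relying on the order continuity of the ambient Banach lattice to move between order and norm convergence, and on coordinating the two limiting parameters (the refinement depth $n$ and the quality of the finite-rank approximation) into a single order-convergent net.
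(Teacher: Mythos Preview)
Your forward implication is correct and is a clean lattice-theoretic variant of the paper's argument: the sandwich
\[
\sum_i|T(a_i^\gamma)|\;\leq\;|T|(f_{\pi,\gamma})\;\leq\;\sum_i|T(a_i^\gamma)|+\bigl(|T|(e)-\sigma_\pi\bigr)
\]
is valid, and together with $T(a_i^\gamma)-T(b_i^\gamma)\overset{\rm o}\to 0$ it yields $|T|(f_{\pi,\gamma})-|T|(g_{\pi,\gamma})\to 0$ after the standard diagonalization (which is unproblematic once you pass, via Lemmas~\ref{le:4} and \ref{le:5}, to norm convergence in $G$). This is essentially the paper's estimate in Lemma~\ref{le:lsd8}, just phrased without singling out $L_1$.

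The converse implication, however, has a real gap. Your plan is: refine $e$ into $2^n$ pieces with $\max_i\|T(e_i^{(n)})\|$ small, then pick signs via a finite-rank compression plus Lemma~\ref{le:rounding}. But the bound from Lemma~\ref{le:rounding} is $(\dim)\cdot\max_i\|v_i\|$, and the only finite-rank compression available for the finite set $\{T(e_i^{(n)}):1\le i\le 2^n\}$ has rank that may be as large as $2^n$ --- there is no $C$-compactness hypothesis here to furnish a rank bounded independently of $n$, as in Theorem~\ref{thm:ourmain}. Your bisection gives $|T|(e_i^{(n)})\approx 2^{-n}|T|(e)$ in order, hence $\max_i\|T(e_i^{(n)})\|\lesssim 2^{-n}\||T|(e)\|$, so the rounding bound is only $\lesssim\||T|(e)\|$, a constant. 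No coordination of ``refinement depth'' with ``approximation quality'' can repair this: to make the error term $\sum_i\|(I-S)T(e_i^{(n)})\|$ small you must take the compression accuracy of order $2^{-n}$, which in general forces the rank back up to order $2^n$. Controlling only the moduli $|T(e_i^{(n)})|$ simply does not buy cancellation in $\sum_i\pm T(e_i^{(n)})$ without some extra structure.

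The paper supplies that structure by a different route. It first reduces to $F=L_1(\mu)$ and there exploits the pointwise identity $|a-b|+|a+b|=|a|+|b|+\bigl||a|-|b|\bigr|$, which converts control of $\bigl\||T|(u_k)-|T|(v_k)\bigr\|$ directly into control of $\|T(u_k)-T(v_k)\|$ with \emph{no} dimension factor (this is Lemma~\ref{le:lsd7} together with the computation in Lemma~\ref{le:lsd8}). For general $F$ it linearizes: it builds a \emph{linear} operator $S$ on $\operatorname{span}\mathfrak{F}_e$ that agrees with $T$ on $\mathfrak{F}_e$ and then invokes the already-known linear case of the theorem. You will need either this $L_1$ identity or the linearization device (or something of comparable strength) to close the converse direction; the rounding lemma alone is not enough.
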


First we need the following lemma.

\begin{lemma} \label{le:lsd7}
Let $E$ be a vector lattice, $T: E \to L_1(\mu)$ an abstract Uryson operator, $\varepsilon > 0$, $n \in \mathbb N$, $x, y_k, u_k, v_k \in E$ for $k = 1, \ldots, n$ satisfy $y_k = u_k \sqcup v_k$,
\begin{equation} \label{eq:fhhfg9d}
x=\bigsqcup_{i=1}^{n}y_{i} \,\,\, \mbox{and} \,\,\, \Big\||T|(x)-\sum_{i=1}^{n}|T(y_{i})|\,\Big\|<\varepsilon.
\end{equation}
Then
\begin{equation} \label{eq:fhhfg9dprime}
\sum_{i=1}^n \Bigl(\| |T|(u_i)-|T(u_{i})|\, \| + \bigl\| |T|(v_i)-|T(v_i)|\, \| \Bigr) < \varepsilon.
\end{equation}
\end{lemma}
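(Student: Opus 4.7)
The plan is to reduce the desired norm inequality to a pointwise (almost everywhere) inequality in $L_1(\mu)$ between nonnegative elements, and then take norms using the fact that the $L_1$-norm is monotone and additive on the positive cone. The key ingredients are orthogonal additivity of both $T$ and $|T|$ (the latter being an abstract Uryson operator by Theorem~\ref{fjjjjjg}), together with property (5) of Theorem~\ref{fjjjjjg}, which gives $|T(z)| \leq |T|(z)$ for every $z \in E$.

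First I would use the decomposition $x=\bigsqcup_{i=1}^{n}y_{i}$ and the orthogonal additivity of $|T|$ to rewrite the left side of the hypothesis \eqref{eq:fhhfg9d} as
\[
|T|(x) - \sum_{i=1}^{n}|T(y_{i})| \;=\; \sum_{i=1}^{n}\bigl(|T|(y_{i}) - |T(y_{i})|\bigr).
\]
Next, for each $i$ I would use $y_{i} = u_{i}\sqcup v_{i}$ to expand $|T|(y_{i}) = |T|(u_{i}) + |T|(v_{i})$ (orthogonal additivity of $|T|$) and $T(y_{i}) = T(u_{i}) + T(v_{i})$ (orthogonal additivity of $T$). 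By the triangle inequality in $L_{1}(\mu)$ applied pointwise, $|T(y_{i})| \leq |T(u_{i})| + |T(v_{i})|$, which yields
\[
|T|(y_{i}) - |T(y_{i})| \;\geq\; \bigl(|T|(u_{i}) - |T(u_{i})|\bigr) + \bigl(|T|(v_{i}) - |T(v_{i})|\bigr).
\]
By property (5) of Theorem~\ref{fjjjjjg}, each of the two differences on the right is a nonnegative element of $L_{1}(\mu)$.

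Summing over $i$, one obtains the pointwise chain
\[
0 \;\leq\; \sum_{i=1}^{n}\Bigl(\bigl(|T|(u_{i}) - |T(u_{i})|\bigr) + \bigl(|T|(v_{i}) - |T(v_{i})|\bigr)\Bigr) \;\leq\; |T|(x) - \sum_{i=1}^{n}|T(y_{i})|.
\]
Taking $L_{1}$-norms, I would invoke two elementary properties of $\|\cdot\|_{1}$ on the positive cone: monotonicity (to pass the inequality through the norm) and additivity (to split the norm of the sum on the left into the sum of norms). The former gives that the $L_1$-norm of the left-hand sum is bounded by $\||T|(x)-\sum_{i}|T(y_{i})|\| < \varepsilon$, while the latter identifies that norm with the left-hand side of \eqref{eq:fhhfg9dprime}, finishing the proof.

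I do not anticipate a genuine obstacle: the only subtlety is making sure that the triangle inequality is applied inside $L_{1}(\mu)$ pointwise (not to $L_{1}$-norms), and that the nonnegativity which allows additivity of the norm comes from Theorem~\ref{fjjjjjg}(5). The hypothesis that the range is $L_{1}(\mu)$ is used exactly at this last step — it is the additivity of the norm on the positive cone which makes the sum of norms in \eqref{eq:fhhfg9dprime} collapse to a single norm that can be controlled by \eqref{eq:fhhfg9d}.
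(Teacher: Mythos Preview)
Your proposal is correct and follows essentially the same route as the paper: both arguments use orthogonal additivity of $|T|$ to expand $|T|(x)$, the lattice triangle inequality $|T(y_i)|=|T(u_i)+T(v_i)|\le |T(u_i)|+|T(v_i)|$, nonnegativity of each $|T|(z)-|T(z)|$, and then the additivity of the $L_1$-norm on the positive cone to convert the pointwise chain into the desired sum of norms. The only cosmetic difference is that the paper cites item~(3) of Theorem~\ref{fjjjjjgg} for the nonnegativity, whereas you invoke item~(5) of Theorem~\ref{fjjjjjg}; these yield the same inequality.
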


\begin{proof}[Proof of Lemma~\ref{eq:fhhfg9dprime}]
Since $x = \bigsqcup_{k=1}^n (u_k \sqcup v_k)$, by (3) of Theorem~\ref{fjjjjjgg} and the orthogonal additivity of $T$ and $|T|$, one has
\begin{equation} \label{eq:fhhfg9d2}
0\leq|T|(x)-\sum_{k=1}^n \Bigl( |T(u_k)|+|T(v_k)| \Bigr) \leq |T|(x)-\sum_{k=1}^n |T(y_k) |
\end{equation}
and
\begin{equation} \label{eq:fhhfg9d3}
|T|(x) = \sum_{k=1}^n (|T|(u_k)+|T|(v_k)).
\end{equation}
Since $|T|(u_k) - |T(u_k)|$ and $|T|(v_k) - |T (v_k)|$ are positive elements of $L_1(\mu)$ for all $k\in\{1,\dots,n\}$, the sum of their norms equals the norm of their sum. Thus, taking into account \eqref{eq:fhhfg9d2} and \eqref{eq:fhhfg9d3}, we obtain
\begin{align*}
&\sum_{k=1}^n \Bigl(\| |T|(u_k)-|T(u_k)|\, \| + \| |T|(v_k)-|T(v_k)|\, \| \Bigr)\\
&= \Bigl\| |T|(x) - \sum_{k=1}^n \Bigl( |T(u_k)|+|T(v_k)| \Bigr) \Bigr\| \leq \Bigl\| |T|(x) - \sum_{k=1}^n | T(y_k) |\, \Bigr\| < \varepsilon.
\end{align*}
\end{proof}

Before the proof of Theorem~\ref{th:narmod}, we make a comment on it. The general idea is the same as in the linear case: first we consider the case of $F=L_{1}(\mu)$, and then prove in the general case using this partial case. The proof of the first part has some minor adjustments. However, the proof of the second part is quite different. We cannot use the same idea, because in the nonlinear case the image $T(I_x)$ of an order ideal $I_x$ in $E$ generated by $x \in E$ under an abstract Uryson operator $T: E \to F$ need not be contained in the order ideal $I_{T(x)}$ in $F$ generated by $T(x)$.

The next lemma presents the main tool for the proof of Theorem~\ref{th:narmod}.

\begin{lemma} \label{le:lsd8}
Let $E,F$ be vector lattices, with $E$ atomless and $F$ an ideal of some order continuous Banach lattices and $x_0 \in E$. Then every abstract Uryson operator $T: E \to F$ is locally order narrow at $x_0$ if and only if is $|T|$ is.
\end{lemma}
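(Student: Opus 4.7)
The plan is to reduce the problem to the case $F=L_{1}(\mu)$ and there exploit Lemma~\ref{le:lsd7} together with the directed-system representation of $|T|$ from Theorem~\ref{fjjjjjgg}(3).

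\emph{Reduction to $F=L_{1}(\mu)$.} By Lemma~\ref{le:5} we may assume $F$ is itself an order continuous Banach lattice. For every $y\sqsubseteq x_{0}$ one has $|T(y)|,|T|(y)\leq|T|(x_{0})$, so $T(y)$ and $|T|(y)$ lie in the principal ideal $F_{0}$ of $F$ generated by $|T|(x_{0})$; another application of Lemma~\ref{le:5} lets us replace $F$ by $F_{0}$. Since $F_{0}$ is an order continuous Banach lattice with weak order unit $|T|(x_{0})$, a Kakutani-type representation identifies $F_{0}$ with an ideal of some $L_{1}(\mu)$, $\mu$ a probability measure. A final use of Lemma~\ref{le:5} reduces the problem to $F=L_{1}(\mu)$. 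Crucially, one must pass through the ideal generated by $|T|(x_{0})$ and not by $T(x_{0})$: as remarked after the statement of Theorem~\ref{th:narmod}, the latter does not contain $T(\mathfrak{F}_{x_{0}})$ in general.

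\emph{Direction $T$ narrow $\Rightarrow$ $|T|$ narrow in $L_{1}(\mu)$.} Fix $y\sqsubseteq x_{0}$ and $\varepsilon>0$. By Theorem~\ref{fjjjjjgg}(3) pick a partition $y=\bigsqcup_{i=1}^{n}y_{i}$ with $\bigl\||T|(y)-\sum_{i}|T(y_{i})|\bigr\|<\varepsilon/3$. Order narrowness of $T$ at each $y_{i}\sqsubseteq x_{0}$, combined with the order continuity of the $L_{1}$-norm on order bounded nets, yields decompositions $y_{i}=u_{i}\sqcup v_{i}$ with $\|T(u_{i})-T(v_{i})\|<\varepsilon/(3n)$. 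Lemma~\ref{le:lsd7} then gives
\[
\sum_{i=1}^{n}\bigl(\bigl\||T|(u_{i})-|T(u_{i})|\bigr\|+\bigl\||T|(v_{i})-|T(v_{i})|\bigr\|\bigr)<\varepsilon/3.
\]
Setting $f=\bigsqcup_{i}u_{i}$ and $g=\bigsqcup_{i}v_{i}$, orthogonal additivity of $|T|$ gives $|T|(f)-|T|(g)=\sum_{i}(|T|(u_{i})-|T|(v_{i}))$, and the pointwise inequality $\bigl||T(u_{i})|-|T(v_{i})|\bigr|\leq|T(u_{i})-T(v_{i})|$ together with the triangle inequality yields $\||T|(f)-|T|(g)\|<\varepsilon$.

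\emph{Direction $|T|$ narrow $\Rightarrow$ $T$ narrow in $L_{1}(\mu)$ (the hard direction).} Iteratively applying $|T|$ order narrowness to successive fragments---each split roughly halves the $L_{1}$-norm of $|T|$ on the piece, up to a geometrically decaying cumulative error---produces a partition $y=\bigsqcup_{i=1}^{n}y_{i}$ with $\max_{i}\||T|(y_{i})\|$ as small as required, and a further refinement via Theorem~\ref{fjjjjjgg}(3) (which only splits pieces further and so preserves this smallness) arranges $\||T|(y)-\sum_{i}|T(y_{i})|\|<\varepsilon/3$. In particular $\max_{i}\|T(y_{i})\|\leq\max_{i}\||T|(y_{i})\|$ is small. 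Next, approximate each $T(y_{i})$ by its conditional expectation $\widetilde{T}(y_{i})$ onto a fine finite sub-$\sigma$-algebra $\mathcal{A}\subseteq\Sigma$, so that the $\widetilde{T}(y_{i})$ lie in a finite-dimensional subspace $V=L_{1}(\Omega,\mathcal{A},\mu)$, and apply Lemma~\ref{le:rounding} with $\lambda_{i}=1/2$ to obtain $\theta_{i}\in\{0,1\}$ with $\bigl\|\sum_{i}(1-2\theta_{i})\widetilde{T}(y_{i})\bigr\|\leq\dim V\cdot\max_{i}\|\widetilde{T}(y_{i})\|$. Setting $f=\bigsqcup_{\theta_{i}=0}y_{i}$ and $g=\bigsqcup_{\theta_{i}=1}y_{i}$, the triangle inequality gives $\|T(f)-T(g)\|\leq\dim V\cdot\max_{i}\|T(y_{i})\|+n\cdot\max_{i}\|T(y_{i})-\widetilde{T}(y_{i})\|$, which is made $<\varepsilon$ by choosing the partition so that $\dim V\cdot\max_{i}\|T(y_{i})\|<\varepsilon/2$ and then $\mathcal{A}$ fine enough that $n\cdot\max_{i}\|T(y_{i})-\widetilde{T}(y_{i})\|<\varepsilon/2$.

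The main obstacle is precisely this second direction: for an abstract Uryson operator the sign structure of $T$ is uncoupled from $|T|$, so a $|T|$-balanced decomposition need not be $T$-balanced; one cannot simply transfer a narrowness witness from $|T|$ to $T$ as in the first direction. The remedy is to combine iterated $|T|$ narrowness (to gain control over $\|T(y_{i})\|$ through $\||T|(y_{i})\|$) with a finite-dimensional rounding argument, and the technical heart of the proof is the simultaneous balancing of the partition size $n$, the approximating dimension $\dim V$, the conditional-expectation error, and the $L_1$-norm of the pieces.
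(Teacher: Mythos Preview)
Your reduction to $F=L_1(\mu)$ via a Kakutani-type representation and Lemma~\ref{le:5} is a reasonable alternative to the paper's route (the paper instead linearizes $T$ on the span of $\mathfrak{F}_{x_0}$ and invokes the linear version of the theorem); this part only needs minor polishing (e.g., the principal ideal $F_0$ need not itself be complete, so one should pass to the band it generates before representing). The forward direction $T\Rightarrow |T|$ essentially matches the paper's argument.

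The hard direction $|T|\Rightarrow T$ has a genuine gap. Your bound is
\[
\|T(f)-T(g)\|\le \dim V\cdot \max_i\|T(y_i)\|+n\cdot\max_i\|T(y_i)-\widetilde T(y_i)\|,
\]
and you propose to choose the partition so that the first term is $<\varepsilon/2$ and \emph{then} refine $\mathcal A$ so that the second is $<\varepsilon/2$. But $\dim V$ is determined by $\mathcal A$, which you are choosing after the partition; refining $\mathcal A$ to force $\max_i\|T(y_i)-\widetilde T(y_i)\|<\varepsilon/(2n)$ will in general drive $\dim V$ up without bound, since the $T(y_i)$ are arbitrary $L_1$-functions with no compactness hypothesis. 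There is no way to break this circularity: fixing $\mathcal A$ first caps how well you can approximate the (not yet known) functions $T(y_i)$, while fixing the partition first leaves $\dim V$ uncontrolled. The analogous argument in Theorem~\ref{thm:ourmain} succeeds only because $C$-compactness makes $\{T(z):z\sqsubseteq y\}$ relatively compact, yielding a single finite-rank approximation good for \emph{all} fragments simultaneously --- a luxury you do not have here. Tellingly, the condition $\||T|(y)-\sum_i|T(y_i)|\|<\varepsilon/3$ you arrange is never used afterward.

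The paper's argument avoids any finite-dimensional approximation. The key is the pointwise identity $|a-b|+|a+b|=|a|+|b|+\bigl||a|-|b|\bigr|$, which in $L_1(\mu)$ integrates to
\[
\|f-g\|=\bigl\||f|-|g|\bigr\|+\|f\|+\|g\|-\|f+g\|.
\]
One chooses the partition $y=\bigsqcup y_k$ so that \eqref{eq:fhhfg9d} holds, splits each $y_k=u_k\sqcup v_k$ using narrowness of $|T|$ so that $\||T|(u_k)-|T|(v_k)\|<\varepsilon/n$, and applies the identity with $f=T(u_k)$, $g=T(v_k)$. Summing, the terms $\bigl\||T(u_k)|-|T(v_k)|\bigr\|$ are bounded via the triangle inequality by $\||T|(u_k)-|T|(v_k)\|$ plus the errors handled by Lemma~\ref{le:lsd7}, while the residual $\sum_k(\|T(u_k)\|+\|T(v_k)\|-\|T(y_k)\|)=\|\sum_k(|T(u_k)|+|T(v_k)|)\|-\|\sum_k|T(y_k)|\|\le\||T|(y)\|-\|\sum_k|T(y_k)|\|$ is controlled directly by \eqref{eq:fhhfg9d}. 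This yields $\|T(u)-T(v)\|<3\varepsilon$ with no circular dependence on any auxiliary dimension.
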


\begin{proof}[Proof of Lemma~\ref{le:lsd8}]
First we prove the lemma for the case of $F=L_{1}(\mu)$. By Lemma~\ref{le:5}, we consider local narrowness instead of the local order narrowness. Fix any $x_0 \in E$ and assume first that $T$ is locally narrow at $x_{0}$.  Fix any $x\in\mathcal{F}_{x_{0}}$ and $\varepsilon>0$.
By (3) of Theorem~\ref{fjjjjjgg},
$$
\Bigl\{|T|(x) - \sum_{k=1}^n | T(y_k) |: \,\,\, x=\bigsqcup\limits_{k=1}^n y_k; \,\,\, n\in\Bbb{N}\Bigr\} \downarrow 0.
$$
Hence, by the order continuity of $L_{1}(\mu)$, we can choose $y_1, \dots, y_n \in\mathcal{F}_{x}$ such that \eqref{eq:fhhfg9d} holds.

For each $k=1,\dots,n$ we represent $y_k=u_k\sqcup v_k$ so that $u_k,v_k\in E$ and $\|T(u_k)-T(v_k)\|<\varepsilon/n$. Then putting $u=\bigsqcup_{k=1}^k u_k$, $v=\bigsqcup_{k=1}^n v_k$, we obtain $x = u \sqcup v$ and by Lemma~\ref{eq:fhhfg9dprime}
\begin{align*}
&\||T|(u)- |T|(w)\|\leq\sum_{k=1}^n \||T|(u_k)-|T|(v_k)\| \leq \\
&\sum_{k=1}^n \||T(u_k)| - |T(v_k)| \| + \sum_{k=1}^n \Bigl( \||T|(u_k)-|T(u_k)| \| + \| |T| (v_k) - |T(v_k) | \| \Bigr) \\
&< \sum_{k=1}^n \| T(u_k) - T(v_k) \| + \varepsilon < 2 \varepsilon.
\end{align*}

By arbitrariness of $x\in\mathcal{F}_{x_{0}}$ and $\varepsilon>0$, $|T|$ is locally narrow at $x_{0}$.

Now let $|T|$ be locally narrow at $x_{0}$, $x \in\mathcal{F}_{x_{0}}$, $\varepsilon > 0$. Like in the first part of the proof, we choose choose $y_1, \dots, y_n \in\mathcal{F}_{x}$ so that \eqref{eq:fhhfg9d} is satisfied. For each $k=1,\dots,n$ we decompose $y_k = u_k \sqcup v_k$ so that $\| |T|(u_k) - |T| (v_k) \| < \varepsilon/n$ and set $u=\bigsqcup_{k=1}^k u_k$ and $v=\bigsqcup_{k=1}^n v_k$.

Since $|a-b| + |a+b| = |a| + |b| + | |a| - |b| |$ for all $a,b \in \mathbb R$, we have that for each $f,g \in L_1(\mu)$
\begin{equation} \label{visimshist}
\|f - g\| = \| |f| - |g| \| + \|f\| + \|g\| - \|f + g\|.
\end{equation}

Observe that since $x = \bigsqcup_{k=1}^n (u_k \sqcup v_k)$, by (3) of Theorem~\ref{fjjjjjgg},
\begin{equation} \label{visimshist001}
\Bigl\| \sum_{k=1}^n ( | T (u_k) | + | T (v_k) | ) \Bigr\| \leq \| | T | \, (x) \|,
\end{equation}
and by the triangle inequality,
\begin{align} \label{visimshist002}
& \,\,\,\,\,\,\,\, \| | T (u_k) | - | T (v_k) | \| \\
& \leq \| | T | \, (u_k) - | T | \, (v_k) \| + \| | T | \, (u_k) - | T (u_k) | \, \| + \| | T | \, (v_k) - | T (v_k) | \, \| \notag
\end{align}
for all $k = 1, \ldots, n$. Since the $L_1$-norm of a sum of positive elements equals the sum of their norms, using \eqref{visimshist} for $f = T(u_k)$, $g = T(v_k)$, \eqref{visimshist001} and \eqref{visimshist002}, we obtain
\begin{align*}
&\| T (u) - T (v) \| \leq \sum_{k=1}^n \| T (u_k) - T (v_k) \|\\
&= \sum_{k=1}^n \| | T (u_k) | - | T (v_k) | \| + \Bigl\| \sum_{k=1}^n ( | T (u_k) | + | T (v_k) | ) \Bigr\| - \Bigl\| \sum_{k=1}^n | T (y_k) | \Bigr\|\\
&\leq \sum_{k=1}^n \| | T | \, (u_k) - | T | \, (v_k) \| + \sum_{k=1}^n \Bigl( \| | T | \, (u_k) - | T (u_k) | \, \| \\
&+ \| | T | \, (v_k) - | T (v_k) | \, \|\Bigr) + \| | T | \, (x) \| - \Bigl\| \sum_{k=1}^n | T (y_k) | \Bigr\|\\
&\stackrel{\rm by \ Lemma \ \ref{eq:fhhfg9dprime}}{<} n \, \frac{\varepsilon}{n} \, + \varepsilon + \Bigl\| |T| (x) - \sum_{k=1}^n |T (y_k) | \Bigr\| < 3 \varepsilon
\end{align*}
by the choose of $y_1, \dots, y_n \in\mathcal{F}_{x}$. Thus, $T$ is locally narrow at $x_{0}$.

Now we consider the general case. Fix  $x_{0} \in E$ and any $x\in\mathcal{F}_{x_{0}}$. For each $\pi \in \Pi_x$  we set $L_\pi = {\rm linear \ span} \, \pi$. Note that $\pi_1 \leq \pi_2$ in $\Pi_x$ implies $L_{\pi_1} \subseteq L_{\pi_2}$.  In particular, $(L_\pi)_{\pi \in \Pi_x}$ is a net with respect to the inclusion. The following linear subspace is a sublattice of $E$
$$
E_1 = \bigcup_{\pi \in \Pi_x} L_\pi = {\rm linear \ span} \, (\mathfrak{F}_x).
$$
For each $\pi=(x_{i})_{i=1}^{n}\in\Pi_x$ we define the linear operator $S_\pi: L_\pi \to F$ which extends the equality $S_\pi x_i = T(x_i)$ for $i=1,\dots,n$ to $L_\pi$ by linearity. By orthogonal additivity of $T$, if $\pi_1 \leq \pi_2$ in $\mathfrak{F}_x$ then $S_{\pi_2}|_{L_{\pi_1}} = S_{\pi_1}$. Thus, we can define the following orthogonally additive operator $S: E_1 \to F$ by
$$
S(x) = \lim_{\pi \in \Pi_x} S_\pi (x), \,\, x \in E_1.
$$
By the construction of $S$,
\begin{equation} \label{verynew}
S \big|_{\mathfrak{F}_{x_0}} = T \big|_{\mathfrak{F}_{x_0}} \,\,\text{and}\,\, |S| \big|_{\mathfrak{F}_{x_0}}=|T| \big|_{\mathfrak{F}_{x_0}}
\end{equation}
(the second equality follows from (3) of Theorem~\ref{fjjjjjgg}).

By \eqref{verynew}, $T$ is locally order narrow at $x_0$ if and only if $S$ is order narrow. By Theorem~\ref{th:narmod} for linear operators (see \cite[Theorem~10.26]{PR}), $S$ is order narrow if and only if $|S|$ is order narrow. Finally, by \eqref{verynew}, $|S|$ is order narrow if and only if $|T|$ is locally order narrow at $x_0$.
\end{proof}

\begin{proof}[Proof of Theorem~\ref{th:narmod}]
Let $T\in\mathcal{U}(E,F)$ be an order narrow operator. Then $T$ is locally order narrow at every $x\in E$. By Lemma~\ref{le:lsd8}, $|T|$ is locally order narrow at every $x_0 \in E$ and therefore $|T|$ is order narrow. The converse assertion can be proved by the same arguments.
\end{proof}

Remark that, to prove Theorem~\ref{th:narmod} for the general case, we have used the case of $F=L_{1}(\mu)$ for linear operators only. However, we could not use the corresponding theorem from \cite{MMP} for linear operators, because it was proved under superfluous assumptions of Dedekind completeness of $E$ and order continuity of $T$.

\section{$\lambda$-narrow abstract Uryson operators}
\label{sec8}

In this section, by analogy with the linear case, we consider $\lambda$-narrow abstract Uryson operators acting between vector lattices.

Let $E,F$ be vector lattices with $F$ Dedekind complete. For an abstract Uryson operator $T:E \to F$ we define a function $\lambda_T: E^+ \to F^+$ by setting
\begin{equation} \label{eq:amsldfkjh7}
\lambda_{T}(x)=\bigwedge_{\pi\in\Pi_{x}}\bigvee_{y\in\pi}|T(y)|
\end{equation}
for every $x \in E^+$. Since $T$ is order bounded and $F$ is Dedekind complete, the function $\lambda_{T}$ is well defined. We say that $\lambda_T$ is the \emph{Enflo-Starbird function} of $T$.

\begin{definition}
Let $E,F$ be vector lattices with $F$ Dedekind complete and $E$ atomless. An abstract Uryson operator $T:E\to F$ is called $\lambda$\emph{-narrow} if $\lambda_{T}(x)=0$ for every $x\in E$.
\end{definition}

It is clear that $\lambda_{T}(x)\leq\lambda_{|T|}(x)$ for every abstract Uryson operator $T$ and every $x\in E^+$.
The following lemma is an analogue of Lemma~\ref{le:5} for $\lambda$-narrow operators.

\begin{lemma} \label{le:1}
Let $E,F,G$ be vector lattices such that, $E$ is atomless and $F$ is an ideal of $G$. An  abstract Uryson operator $T:E\to F$ is $\lambda$-narrow if and only if $T:E\to G$ is $\lambda$-narrow.
\end{lemma}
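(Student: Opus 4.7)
The plan is to reduce the statement to the fact that both the finite suprema and the infimum appearing in the formula \eqref{eq:amsldfkjh7} defining $\lambda_T$ are insensitive to whether they are computed in $F$ or in the containing space $G$, provided $F$ sits inside $G$ as an order ideal. Once this is verified pointwise, i.e.\ that $\lambda_T^F(x)=\lambda_T^G(x)$ for every $x\in E^+$ (where the superscripts indicate in which space the lattice operations are taken), the equivalence ``$T:E\to F$ is $\lambda$-narrow iff $T:E\to G$ is $\lambda$-narrow'' follows immediately from the definition.

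First, for any $x\in E^+$ and $\pi\in\Pi_x$, since $F$ is an order ideal of $G$ it is in particular a sublattice, so the finite supremum $a_\pi:=\bigvee_{y\in\pi}|T(y)|$ taken in $F$ agrees with the same supremum taken in $G$, and in either case $a_\pi\in F^+$. It thus remains only to identify $\bigwedge_{\pi\in\Pi_x}a_\pi$ in $F$ with the corresponding infimum in $G$. For this, the key observation is that $F$ and $G$ share exactly the same set of lower bounds for the family $\{a_\pi\}_{\pi\in\Pi_x}$. Indeed, fix any $\pi_0\in\Pi_x$; then any lower bound $b\in G^+$ of the family satisfies $0\leq b\leq a_{\pi_0}\in F$, and since $F$ is an order ideal of $G$, this forces $b\in F$. (Negative lower bounds can be replaced by their positive parts, or the family can be shifted by a common bound, so restricting to $b\geq 0$ costs nothing.) Hence the infimum of $\{a_\pi\}$ in $G$ exists and lies in $F$ precisely when the infimum in $F$ exists, and in that case the two infima coincide. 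Consequently $\lambda_T^F(x)=\lambda_T^G(x)$ for every $x\in E^+$, and the equivalence of the two $\lambda$-narrowness assertions is immediate.

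There is essentially no technical obstacle here: the argument is entirely structural, paralleling the verification for order narrowness carried out in Lemma~\ref{le:5}. The only point requiring a little care is that the very definition of $\lambda$-narrow presupposes Dedekind completeness of the target; the ideal argument above shows that the infimum defining $\lambda_T(x)$ exists in $F$ exactly when it exists in $G$, so the statement of the lemma is automatically consistent with this convention.
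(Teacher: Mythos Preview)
Your proof is correct and follows essentially the same approach as the paper's, which simply notes that since $F$ is an ideal of $G$, the infimum $\inf_{\pi\in\Pi_x}\bigvee_{y\in\pi}|T(y)|$ vanishes in $F$ if and only if it vanishes in $G$. You prove a slightly stronger statement---that $\lambda_T^F(x)=\lambda_T^G(x)$ whenever either side exists---but the mechanism (lower bounds of the family $\{a_\pi\}$ in $G$ automatically lie in $F$ because they are trapped below some $a_{\pi_0}\in F$) is identical to what the paper implicitly uses.
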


\begin{proof}
Fix any $x\in E^+$ and set $\lambda_{\pi}=\bigvee_{u\in\pi}|Tu|$
for each $\pi\in\Pi_{x}$ of $x$. By \eqref{eq:amsldfkjh7}, $\lambda_{T}(x)=\inf_{\pi\in\Pi_{x}}\lambda_{\pi}$.
Since $F$ is an ideal of $G$, we have that $\inf_{\pi\in\Pi_{x}}\lambda_{\pi}=0$
in $F$  if and only if $\inf_{\pi\in\Pi_{x}}\lambda_{\pi}=0$ in $G$.
\end{proof}

We also need the following known lemma.

\begin{lemma}[\cite{MMP}, \cite{PR}, Lemma~7.50] \label{le:1kkkk1}
Let $z_{1},\dots,z_{2n}\in L_{1}^{+}(\mu)$, $K=\sum_{i=1}^{2n} \|z_{i}\|$,
$z_{0}=\bigvee_{i=1}^{2n}z_{i}$ and $\alpha=\|z_{0}\|$. Then there exists a permutation
$\tau:\{1,\dots,2n\}\to\{1,\dots,2n\}$ such that
$$
\Big\|\sum_{i=1}^{2n}(-1)^{i}z_{\tau(i)}\Big\|\leq\sqrt{2\alpha K}.
$$
\end{lemma}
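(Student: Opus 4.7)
The plan is the probabilistic method. I will take $\tau$ uniformly distributed on the symmetric group of $\{1,\dots,2n\}$, let
$f_\tau:=\sum_{i=1}^{2n}(-1)^i z_{\tau(i)}$,
and show $\mathbb{E}\|f_\tau\|_1^{\,2}\le 2\alpha K$. Some realization $\tau$ will then achieve $\|f_\tau\|_1\le\sqrt{2\alpha K}$, which is exactly what the lemma asks.

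First I would leave $L_1$ via a weighted Cauchy--Schwarz. Since every $z_i\ge 0$, one has $\{f_\tau\ne 0\}\subseteq\{z_0>0\}$, and therefore
\[
\|f_\tau\|_1 \;=\; \int_{\{z_0>0\}}\frac{|f_\tau|}{\sqrt{z_0}}\cdot\sqrt{z_0}\,d\mu \;\le\; \Bigl(\int \frac{f_\tau^{\,2}}{z_0}\,d\mu\Bigr)^{1/2}\sqrt{\alpha},
\]
whence $\|f_\tau\|_1^{\,2}\le\alpha\int f_\tau^{\,2}/z_0\,d\mu$. It thus suffices to prove $\mathbb{E}\int f_\tau^{\,2}/z_0\,d\mu\le 2K$.

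Writing $\varepsilon_i:=(-1)^{\tau^{-1}(i)}$, so that $f_\tau=\sum_i\varepsilon_i z_i$, the main substantive step will be to compute $\mathbb{E}[\varepsilon_i\varepsilon_j]$ for $i\ne j$. Since $(\tau^{-1}(i),\tau^{-1}(j))$ is uniform on ordered pairs of distinct positions, counting the $2n(n-1)$ same-parity pairs out of the $2n(2n-1)$ possibilities gives $\mathbb{E}[\varepsilon_i\varepsilon_j]=-1/(2n-1)$. Substituting,
\[
\mathbb{E}[f_\tau^{\,2}] \;=\; \sum_i z_i^{\,2}\;-\;\frac{1}{2n-1}\Bigl[\bigl(\textstyle\sum_i z_i\bigr)^{2}-\sum_i z_i^{\,2}\Bigr] \;\le\; \frac{2n}{2n-1}\sum_i z_i^{\,2} \;\le\; 2\,z_0\sum_i z_i,
\]
using $z_i\le z_0$ pointwise in the last step. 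Dividing by $z_0$ on its support and integrating yields $\mathbb{E}\int f_\tau^{\,2}/z_0\le 2K$; combined with the Cauchy--Schwarz step this gives $\mathbb{E}\|f_\tau\|_1^{\,2}\le 2\alpha K$, as required.

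The hard part is really just the second-moment computation with the permutation-correlated signs: the slight negative covariance $-1/(2n-1)$ is what keeps $\mathbb{E}[f_\tau^{\,2}]$ controlled by $\sum z_i^{\,2}$ rather than $(\sum z_i)^2$, and the ratio $2n/(2n-1)\le 2$ is the source of the constant $\sqrt 2$ in the final bound. The support restriction in the Cauchy--Schwarz step is harmless since $|f_\tau|$ vanishes off $\{z_0>0\}$, so no genuine obstacle arises.
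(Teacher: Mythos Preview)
Your argument is correct. The paper does not include a proof of this lemma; it merely cites it from \cite{MMP} and \cite[Lemma~7.50]{PR}, so there is no in-paper proof to compare against. For what it is worth, your probabilistic approach---averaging $\|f_\tau\|_1^2$ over the symmetric group after the weighted Cauchy--Schwarz reduction $\|f_\tau\|_1^2 \le \alpha \int f_\tau^2/z_0$---is essentially the standard proof of this estimate (and is, in spirit, the argument found in the cited references). The covariance computation $\mathbb{E}[\varepsilon_i\varepsilon_j]=-1/(2n-1)$ and the pointwise bound $\sum_i z_i^2 \le z_0 \sum_i z_i$ are both right, and the passage from $\mathbb{E}\|f_\tau\|_1^2 \le 2\alpha K$ to the existence of a good $\tau$ is immediate. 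No gaps.
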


\begin{thm} \label{th:lamordern}
Let $E, F$ be  vector lattices with $F$ Dedekind complete, such that $E$ is atomless and $F$ is an ideal of an order continuous Banach lattice $G$. Then a positive abstract Uryson operator $T: E \to F$ is $\lambda$-narrow
if and only if $T$ is order narrow.
\end{thm}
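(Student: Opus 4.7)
The plan is to prove the two implications separately, using a direct iterative argument for the forward direction and a reduction to the linear counterpart of the theorem for the reverse one. For the forward direction ($T$ order narrow $\Rightarrow T$ $\lambda$-narrow), I start from the one-step estimate $\lambda_T(y) \leq \frac{1}{2} T(y)$, valid for every $y \in E^+$: order narrowness of $T$ at $y$ furnishes decompositions $y = f_\alpha \sqcup g_\alpha$ with $T(f_\alpha) - T(g_\alpha) \overset{\rm o}\longrightarrow 0$, and positivity plus orthogonal additivity of $T$ give $T(f_\alpha) + T(g_\alpha) = T(y)$, whence
\[
T(f_\alpha) \vee T(g_\alpha) = \frac{1}{2}\bigl(T(y) + |T(f_\alpha) - T(g_\alpha)|\bigr) \overset{\rm o}\longrightarrow \frac{1}{2}T(y).
\]
Iterating this estimate starting from the trivial partition $\{x\}$ and at each step splitting every piece of the current partition with a chosen small error tolerance, one obtains after $n$ steps a partition $\pi_n$ of $x$ with $\bigvee_{z \in \pi_n} T(z) \leq 2^{-n} T(x) + \varepsilon$ for an arbitrary preassigned $\varepsilon > 0$ in $F^+$. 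Hence $\lambda_T(x) \leq 2^{-n} T(x)$ for every $n \geq 1$, and the Archimedean property of the Banach lattice $F$ forces $\lambda_T(x) = 0$.

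For the reverse direction ($T$ $\lambda$-narrow $\Rightarrow T$ order narrow), I fix $x_0 \in E^+$ and reduce to the positive-linear case via the construction used in the proof of Lemma~\ref{le:lsd8}. Form the (atomless) sublattice $E_1$ equal to the linear span of $\mathfrak{F}_{x_0}$, together with the linear operator $S: E_1 \to F$ extending $T|_{\mathfrak{F}_{x_0}}$ constructed there. Since every $e \in E_1^+$ is a nonnegative combination $\sum a_i x_i$ of mutually disjoint positive fragments of $x_0$, the operator $S$ is positive. Using that partitions of $x_0$ in $E_1$ and in $E$ coincide, and that $S$ agrees with $T$ on $\mathfrak{F}_{x_0}$, one gets $\lambda_S(x_0) = \lambda_T(x_0) = 0$; a product-type partition argument then extends this to $\lambda_S(e) = 0$ for every $e \in E_1^+$: given $e = \sum a_i x_i$, simultaneously chosen partitions $\{z_{i,k}\}_k$ of each $x_i$ with $\bigvee_k T(z_{i,k}) \overset{\rm o}\longrightarrow 0$ combine into partitions $\{\sum_i a_i z_{i,k}\}_k$ of $e$ satisfying $\bigvee_k S(\sum_i a_i z_{i,k}) \leq \sum_i a_i \bigvee_k T(z_{i,k}) \overset{\rm o}\longrightarrow 0$. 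The linear version of the theorem (available in \cite{PR}) then implies that $S$ is order narrow, and the decompositions of $x_0$ witnessing this consist of fragments in $\mathfrak{F}_{x_0}$ on which $S$ coincides with $T$, so $T$ is order narrow at $x_0$. To cover $x \in E^-$, apply the same reasoning to the positive orthogonally additive operator $T'(z) := T(-z)$, and then invoke Proposition~\ref{pr:wonder}.

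The main obstacle lies in the reverse direction: one must carefully verify that the linear extension $S$ inherits $\lambda$-narrowness on the whole cone $E_1^+$ rather than only at $x_0$, which is precisely the point of the product-type partition construction above, and one must invoke the linear counterpart of the theorem, whose underlying proof uses the $L_1$-specific rounding Lemma~\ref{le:1kkkk1} combined with order continuity of $G$ to push norm smallness to order smallness over ideals of order-continuous Banach lattices.
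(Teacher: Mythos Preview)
Your reverse direction is essentially the paper's own approach: the paper also handles the implication ``$\lambda$-narrow $\Rightarrow$ order narrow'' by first giving a direct $L_1$-argument via the permutation Lemma~\ref{le:1kkkk1}, and then reducing the general case by exactly the linearization $S:E_1\to F$ on $E_1=\mathrm{span}\,\mathfrak F_{x_0}$ that you describe (this is what ``exactly like in Theorem~\ref{th:narmod}'' means). Your verification that $S$ is positive and that $\lambda_S(e)=0$ for every $e\in E_1^+$ via the product-partition bound $\bigvee_k S\bigl(\sum_i a_i z_{i,k}\bigr)\le \sum_i a_i\bigvee_k T(z_{i,k})$ is correct, and the appeal to Proposition~\ref{pr:wonder} together with $T'(z)=T(-z)$ to cover $E^-$ is fine.

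Your forward direction, by contrast, is genuinely different from the paper's. The paper passes to norm-narrowness via Lemma~\ref{le:4}, builds a disjoint tree on $x$ with $\|T(y_{2k})-T(y_{2k+1})\|$ uniformly small, proves inductively that $\|T(y_{2^m+j})-2^{-m}T(x)\|<\varepsilon_1$, and reads off $\bigl\|\bigvee_i T(x_i)\bigr\|<\varepsilon$. Your argument instead stays in the order structure and uses the one-step halving estimate $\lambda_T(y)\le\frac12 T(y)$, which is more elementary and in fact does not need the order-continuity hypothesis on $G$. There is, however, a small gap in your iteration as written: the phrase ``with a chosen small error tolerance \dots\ for an arbitrary preassigned $\varepsilon>0$ in $F^+$'' is not justified, since an order-null net $u_\alpha\downarrow 0$ need not eventually satisfy $u_\alpha\le\varepsilon$ for a prescribed $\varepsilon\in F^+$. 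The fix is immediate: do not choose a single $\alpha$ but take infima over the nets. From $\lambda_T(x)\le \bigvee_i \lambda_T(y_i)$ for any $\pi=\{y_i\}\in\Pi_x$ (which follows because the nets $\bigl(\bigvee_{z\in\pi_i}T(z)\bigr)_{\pi_i}$ are decreasing and $|u\vee w - v\vee w|\le|u-v|$) together with your one-step bound, one gets $\lambda_T(x)\le \frac12\bigvee_i T(y_i)$ for every $\pi$, hence $\lambda_T(x)\le\frac12\lambda_T(x)$ and thus $\lambda_T(x)=0$.
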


\begin{proof}
By Lemma~\ref{le:4}, instead of the order narrowness we consider narrowness. Assume $T$ is narrow. We show that $T$ is $\lambda$-narrow. Fix any $x \in E$ and $\varepsilon>0$. It is enough to prove that there is $\pi = \{x_i: \, i=1, \ldots, 2^m\} \in \Pi_x$ such that $\|\bigvee_{i=1}^{2^m} T(x_i)\|<\varepsilon$. Choose $m$ so that $2^{-m}\|T(x)\|<\varepsilon/2$. Using that $T$ is narrow and orthogonally additive, we construct a disjoint tree $(y_k)_{k=1}^\infty$ on $x$ so that $\|T(y_{2k}) - T(y_{2k+1})\| < 2^{-m-1} \varepsilon \stackrel{\rm def}{=} \varepsilon_1$. Then we claim that
\begin{equation} \label{ec:claim}
\|T(y_{2^n+j}) - 2^{-n} T(x)\| < \varepsilon_1 \,\,\, \mbox{for all} \,\,\, n = 1,2, \ldots \, j = 0,1, \ldots, 2^n-1.
\end{equation}
Indeed, for $n=1$ and $j=0,1$ this follows from the next observation
$$
\|T(y_{2+j}) - 2^{-1} T(x) \| = \frac12 \|T(y_2) - T(y_3)\| < \varepsilon_1.
$$
Supposing the claim is true for a given $n$ and all $j = 0,1, \ldots, 2^n-1$, we obtain for $n+1$ and all $j = 0,1, \ldots, 2^n-1$
\begin{align*}
&\bigl\|T(y_{2^{n+1}+2j}) - 2^{-n-1}T(x) \bigr\| = \frac12 \bigl\|2 T(y_{2^{n+1}+2j}) - 2^{-n} T(x) \bigr\|\\
&\leq \frac12 \bigl\|2 T(y_{2^{n+1}+2j}) - T(y_{2^n+i}) \bigr\| + \frac12 \bigl\|T(y_{2^n+i}) - 2^{-n} T(x) \bigr\|\\
&< \frac12 \bigl\|T(y_{2^{n+1}+2j}) - T(y_{2^{n+1}+2j+1}) \bigr\| + \frac{\varepsilon_1}{2} < \frac{\varepsilon_1}{2} + \frac{\varepsilon_1}{2} = \varepsilon_1.
\end{align*}
Analogously, $\bigl\|T(y_{2^{n+1}+2j+1}) - 2^{-n-1}T(x) \bigr\| < \varepsilon_1$, and the claim is proved by induction.

Now set $x_i = y_{2^m+i-1}$ for $i = 1, \ldots, 2^m$. Then $x = \bigsqcup_{i=1}^{2^m} x_i$ and, by \eqref{ec:claim}, $\|T(x_i) - 2^{-m} T(x)\| < \varepsilon_1$ for $i = 1, \ldots, 2^m$. Hence,
\begin{equation} \label{eq:cnxb7}
\frac{T(x)}{2^m} - \bigvee_{i=1}^{2^m} T(x_i) \leq \frac{T(x)}{2^m} - T(x_1) \leq \sum_{i=1}^{2^m} \Bigl|\frac{T(x)}{2^m} - T(x_i)\Bigr|
\end{equation}
(in the middle part of the previous inequality one can put any of $x_k$ instead of $x_1$). On the other hand,
$$
T(x_k) = \frac{T(x)}{2^m} + \Bigl( T(x_k) - \frac{T(x)}{2^m} \Bigr) \leq \frac{T(x)}{2^m} + \sum_{i=1}^{2^m} \Bigl|\frac{T(x)}{2^m} - T(x_i)\Bigr|
$$
for every $1 \leq k \leq 2^m$. Therefore
$$
\bigvee_{k=1}^{2^m}T(x_k) \leq \frac{T(x)}{2^m} + \sum_{i=1}^{2^m} \Bigl|\frac{T(x)}{2^m} - T(x_i)\Bigr|
$$
and hence
\begin{equation} \label{eq:cnxb8}
\bigvee_{k=1}^{2^m}T(x_k) - \frac{T(x)}{2^m} \leq \sum_{i=1}^{2^m} \Bigl|\frac{T(x)}{2^m} - T(x_i)\Bigr|
\end{equation}
Then \eqref{eq:cnxb7} and \eqref{eq:cnxb8} imply
\begin{align*}
\Big\|\bigvee_{i=1}^{2^m} T(x_i) - \frac{T(x)}{2^m} \Big\| &\leq \Big\| \sum_{i=1}^{2^m} \Bigl|\frac{T(x)}{2^m} - T(x_i)\Bigr| \Big\| \\
&\leq \sum_{i=1}^{2^m} \Bigl\|\frac{T(x)}{2^m} - T(x_i)\Bigr\| < 2^m \varepsilon_1 = \frac{\varepsilon}{2}.
\end{align*}
Finally we obtain
$$
\Big\|\bigvee_{i=1}^{2^m} T(x_i) \Big\| \leq \Big\|\bigvee_{i=1}^{2^m}T(x_i) - \frac{T(x)}{2^m} \Big\| + \Bigl\| \frac{T(x)}{2^m} \Bigr\| < \frac{\varepsilon}{2}+\frac{\varepsilon}{2} = \varepsilon.
$$
Thus, $T$ is $\lambda$-narrow.

Now let $T$ be a $\lambda$-narrow operator. We prove that $T$ is narrow. Like in the
proof of Theorem~\ref{th:narmod}, we consider the case of $F=L_{1}(\mu)$, and the general case can
be reduced to this one exactly like in Theorem~\ref{th:narmod}). Fix any $x\in E$ and $\varepsilon>0$. It $T(x) = 0$ then $T(y) = 0$ for each $y \sqsubseteq x$ by positivity and orthogonal additivity of $T$, and there is nothing to prove. Assume $T(x) \neq 0$. By positivity of $T$, the net $(\bigvee_{y\in\pi}T(y))_{\pi\in\Pi_{x}}$ is decreasing. Then we can write
$$
\lambda_{T}(x)=\lim_{\pi\in\Pi_{x}}\bigvee_{y\in\pi} T(y) = 0.
$$
Using the order continuous of $F$, we find $\pi = \{x_i: \, i=1, \ldots, 2n\} \in \Pi_x$, so that
$$
\alpha = \Big\|\bigvee_{i=1}^{2n} T(x_i)\Big\|\leq\frac{\varepsilon^{2}}{2\|T(x)\|}.
$$
Now we put $z_{i}=T(x_{i})$ for $1\leq i\leq 2n$. Note that $K = \sum_{i=1}^{2n} \|z_i\| = \|T(x)\|$.
Choose by Lemma~\ref{le:1kkkk1} a permutation $\tau:\{1,\dots,2n\}\to\{1,\dots,2n\}$ so that
$$
\Big\|\sum_{i=1}^{2n}(-1)^{i}z_{\tau(i)}\Big\|\leq\sqrt{2\alpha K} \leq \varepsilon.
$$
Then for $y_1 = \sum_{k=1}^n x_{\tau(2k-1)}$ and $y_2 = \sum_{k=1}^n x_{\tau(2k)}$ we have $x = y_1 \sqcup y_2$ and
$$
\|T(y_1) - T(y_2)\| = \Big\|\sum_{i=1}^{2n}(-1)^{i}z_{\tau(i)}\Big\| < \varepsilon.
$$
\end{proof}

\section{Pseudo narrow abstract Uryson operators}
\label{sec8}

Let $E,F$ be vector lattices. An abstract Uryson operator $T:E\to F$ is called \emph{disjointness preserving} if $T(x) \bot T(y)$ for all $x,y\in E$ with $x\bot y$. As we will see later (see Theorem~\ref{RTRO}), the set of all disjointness preserving abstract Uryson operators is solid. In particular, an abstract Uryson operator $T:E\to F$ is disjointness preserving if and only if $|T|$ is.

\begin{definition}
Let $E,F$ be vector lattices. An abstract Uryson operator $T\in\mathcal{U}(E,F)$ is called \emph{pseudo-narrow} if there is no disjointness preserving abstract Uryson operator $S\in\mathcal{U}(E,F)$ so that $0 < S\leq|T|$.
\end{definition}

\begin{thm} \label{thm:lambdapseudo}
Let $E,F$ be vector lattices, $F$ Dedekind complete and $E$ atomless. If a positive abstract Uryson operator $T:E\to F$ is $\lambda$-narrow then $T$ is pseudo-narrow.
\end{thm}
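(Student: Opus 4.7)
The plan is a short proof by contradiction. Suppose $T$ is not pseudo-narrow; since $T\ge 0$ one has $|T|=T$, so there exists a disjointness preserving abstract Uryson operator $S\in\mathcal{U}(E,F)$ with $0<S\le T$. I will derive the desired contradiction by showing $S(x)=0$ for every $x\in E$.

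Fix an arbitrary $x\in E$ and any partition $\pi=\{y_1,\dots,y_n\}\in\Pi_x$. Three observations combine: \emph{(i)} the $y_i$ are pairwise disjoint by the very definition of $\Pi_x$; \emph{(ii)} $S$ is disjointness preserving, so the family $(S(y_i))_{i=1}^n$ is pairwise disjoint in $F$; \emph{(iii)} $S\ge 0$, so each $S(y_i)\ge 0$. For a finite family of pairwise disjoint positive vectors in a vector lattice the join equals the sum, hence, together with orthogonal additivity of $S$,
$$
\bigvee_{y\in\pi} S(y)\;=\;\sum_{i=1}^{n}S(y_i)\;=\;S\Bigl(\bigsqcup_{i=1}^{n}y_i\Bigr)\;=\;S(x).
$$
Using $S\le T$ and $T\ge 0$, one has $S(y)\le T(y)=|T(y)|$ for each $y\in\pi$, whence
$$
S(x)\;=\;\bigvee_{y\in\pi}S(y)\;\le\;\bigvee_{y\in\pi}|T(y)|.
$$

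Since this bound holds for every $\pi\in\Pi_x$, taking the infimum over $\Pi_x$ and invoking the $\lambda$-narrowness of $T$ yields
$$
0\;\le\;S(x)\;\le\;\bigwedge_{\pi\in\Pi_x}\bigvee_{y\in\pi}|T(y)|\;=\;\lambda_T(x)\;=\;0,
$$
so $S(x)=0$. As $x\in E$ was arbitrary, $S=0$, contradicting $S>0$.

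The only substantive step is the identity $\bigvee_{y\in\pi}S(y)=S(x)$; this is where the disjointness preserving property of $S$ does all the work, converting a disjoint join in $F$ back into the global value $S(x)$. After that, the domination $S\le T$ and the definition of $\lambda_T$ finish the argument mechanically. Dedekind completeness of $F$ is used tacitly to guarantee that the join $\bigvee_{y\in\pi}|T(y)|$ and the infimum defining $\lambda_T(x)$ exist in $F$, while atomlessness of $E$ plays no role beyond that which is already embedded in the definition of $\lambda$-narrowness.
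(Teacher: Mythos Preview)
Your proof is correct and follows essentially the same route as the paper's own argument: assuming a disjointness preserving $0<S\le T$ exists, one uses that $S(x)=\bigvee_{y\in\pi}S(y)\le\bigvee_{y\in\pi}T(y)$ for every $\pi\in\Pi_x$, so $S(x)\le\lambda_T(x)$. The only cosmetic difference is that the paper fixes a single $x$ with $S(x)>0$ and concludes $\lambda_T(x)>0$ directly, whereas you run the same inequality over all $x$ to force $S=0$; the mathematical content is identical.
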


\begin{proof}
Suppose $T$ is not pseudo-narrow. Let $S\in\mathcal{U}(E,F)$ be a disjointness preserving operator, $0< S\leq T$ and $x\in E$, such that $S(x)>0$. Then for every representation $x=\bigsqcup\limits_{i=1}^{n}x_{i}$ we have
$$
S(x) = \bigsqcup\limits_{i=1}^n S(x_i) = \bigvee_{i=1}^n S(x_i) \leq \bigvee_{i=1}^n T(x_i).
$$
Hence $\lambda_{T}(x)\geq S(x)>0$ and $T$ is not $\lambda$-narrow.
\end{proof}

The converse assertion will be proved under the additional assumption of the lateral continuity of $T$. We need the some preliminary lemmas. Recall that a vector lattice $E$ is said to have the \emph{projection property} if every band of $E$ is a projection band.

\begin{lemma} \label{le:gemor}
Let $E$ be an atomless vector lattice with the projection property, and $F$ a Dedekind complete vector lattice. Let $T\in\mathcal{U}^+(E,F)$ be a laterally continuous abstract Uryson operator, $e\in E$ and $f\in F^+$. Let $\varphi:\mathfrak{F}_e\to\mathfrak{F}_f$ be a Boolean homomorphism such that $\varphi(x)\leq T(x)$ for each $x\in\mathfrak{F}_e$. Then there exists a disjointness preserving abstract Uryson operator $S\in\mathcal{U}^+ (E,F)$ such that $S \leq T$ and $S(x) = \varphi(x)$ for each $x\in\mathfrak{F}_e$.
\end{lemma}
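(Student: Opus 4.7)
The plan is a direct construction: define $S:E\to F$ by
\[
S(x) := \bigvee \{\varphi(y) : y \in \mathfrak{F}_e \cap \mathfrak{F}_{x^+}\}, \quad x \in E,
\]
where the supremum is taken in $F$. I work under the simplifying assumption $e \geq 0$; the general case reduces to this by splitting $e = e^+ - e^-$ and handling the restrictions of $\varphi$ to $\mathfrak{F}_{e^+}$ and to $\mathfrak{F}_{e^-}$ (via $y \mapsto \varphi(-y)$) separately and adding. The set in the display contains $\varphi(0)=0$ and is bounded above by $\varphi(e) \leq f$ (order-preservation of a Boolean homomorphism), so the supremum exists by Dedekind completeness of $F$ and satisfies $0 \leq S(x) \leq f$, yielding positivity and order-boundedness of $S$ at once.

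The heart of the proof is orthogonal additivity. For $x_1 \perp x_2$ in $E$, the four components $x_1^\pm, x_2^\pm$ are pairwise disjoint, so $(x_1+x_2)^+ = x_1^+ + x_2^+$. The key decomposition is: for disjoint $u,v \in E^+$, every fragment $y$ of $u+v$ has the form $z_u + z_v$ with $z_u \sqsubseteq u$ and $z_v \sqsubseteq v$ (take $z_u := y \wedge u$, $z_v := y \wedge v$ and use $y \wedge (u \vee v) = (y \wedge u) \vee (y \wedge v)$ together with disjointness of $u,v$), and such a sum belongs to $\mathfrak{F}_e$ if and only if each summand does. Combined with Boolean additivity $\varphi(z_u + z_v) = \varphi(z_u) + \varphi(z_v)$ on disjoint elements and the standard identity $\bigvee(a+b) = \bigvee a + \bigvee b$ for disjoint upward-directed sets, this gives $S(x_1+x_2) = S(x_1^+) + S(x_2^+) = S(x_1) + S(x_2)$.

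The remaining properties follow cleanly. For $y \in \mathfrak{F}_e$, transitivity of $\sqsubseteq$ forces $\mathfrak{F}_y \subseteq \mathfrak{F}_e$, so the sup defining $S(y)$ runs over $\mathfrak{F}_y$ and equals $\varphi(y)$ by order-preservation. Disjointness preservation: any fragments $z_1 \in \mathfrak{F}_e \cap \mathfrak{F}_{x_1^+}$ and $z_2 \in \mathfrak{F}_e \cap \mathfrak{F}_{x_2^+}$ are mutually disjoint when $x_1 \perp x_2$, hence $\varphi(z_1) \perp \varphi(z_2)$; the disjoint complement of $S(x_2)$ is an order-closed band, so disjointness passes to the suprema. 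Finally, the domination
\[
S(x) = S(x^+) \leq \bigvee\{T(y) : y \sqsubseteq x^+\} = T(x^+) \leq T(x^+) + T(-x^-) = T(x)
\]
uses $\varphi(y) \leq T(y)$, Theorem~\ref{fjjjjjg}(3), orthogonal additivity, and positivity of $T$.

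The principal technicality is the fragment-decomposition identity $\mathfrak{F}_e \cap \mathfrak{F}_{u+v} = \{z_u + z_v : z_u \in \mathfrak{F}_e \cap \mathfrak{F}_u,\ z_v \in \mathfrak{F}_e \cap \mathfrak{F}_v\}$ for disjoint $u,v \in E^+$, which leans on the projection property of $E$ to ensure that fragments distribute over disjoint joins. It is worth noting that the laterally continuous hypothesis on $T$ plays no visible role in this construction; presumably it is reserved for applications of the lemma where one needs $S$ itself to inherit lateral continuity.
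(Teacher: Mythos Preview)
Your proof is correct and takes a genuinely different, more economical route than the paper. The paper builds $S$ through a four-stage extension: (i) define $S_1$ on the span $G$ of $\mathfrak{F}_e$ via $S_\pi\bigl(\sum\lambda_i x_i\bigr)=\sum|\lambda_i|\varphi(x_i)$; (ii) extend to $S_2$ on the order ideal $I_e$ by a supremum over $\{S_1(y):|y|\le|x|,\,y\in G\}$; (iii) cut down to $S_3 = T|_{I_e}\wedge S_2$ to recover the bound $S_3\le T$ (here the lateral continuity of $T$ is invoked so that $S_3$, being dominated by a laterally continuous operator, is itself laterally continuous); (iv) extend $S_3$ to $S_4$ on the band $B_e$ via $\sup\{S_3(y):y\sqsubseteq x,\,y\in I_e\}$, and finally set $S=S_4\circ P_e$, using the band projection. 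Your single formula $S(x)=\bigvee\{\varphi(y):y\in\mathfrak{F}_e\cap\mathfrak{F}_{x^+}\}$ bypasses all of this: domination $S\le T$ is automatic from $\varphi(y)\le T(y)\le T(x)$, so no cut-down step is needed, and the awkward passage through step functions and ideals disappears. The trade-off is that the paper's $S$ is symmetric in the sense $S_1(-x)=S_1(x)$ on step functions, while yours kills negative elements; but the lemma does not ask for this, and your choice is perfectly adequate for the downstream application in Theorem~\ref{th:ddddjfhhd7}.

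Two remarks. First, your claim that the fragment-decomposition identity ``leans on the projection property'' is over-cautious: for positive $u,v,e$ the facts you need (that $\mathfrak{F}_{u+v}$ is a Boolean algebra under $\wedge,\vee$; that a fragment of $w$ lying below another fragment of $w$ is a fragment of the latter; and transitivity of $\sqsubseteq$) hold in any vector lattice. So your argument actually proves the lemma without the projection-property hypothesis on $E$. Second, your closing observation is on the mark: the lateral continuity of $T$ enters the paper's argument only through the ideal property of the band $\mathcal{U}^{lc}$ at step (iii), which your construction does not need. Thus your approach not only shortens the proof but shows that two of the stated hypotheses are superfluous for the lemma as written.
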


\begin{proof}
For each $\pi=(x_{i})_{i=1}^{n}\in\Pi_{e}$ we set $L_{\pi}=\text{span}\{x_{i}:\,1\leq i\leq n\}$.
We note that $\pi_{1}\leq\pi_{2}$
in $\Pi_{e}$ implies $L_{\pi_{1}}\subseteq L_{\pi_{2}}$.  In particular,
$(L_{\pi})_{\pi\in\Pi_{e}}$ is an increasing net with respect to the inclusion.
The following linear subspace is a sublattice of $E$
$$
G=\bigcup_{\pi\in\Pi_{e}} L_{\pi} = \text{span} \, \mathfrak{F}_e
$$
(in other words, $G$ is the set of all $e$-step functions). For each $\pi=(x_i)_{i=1}^n \in \Pi_e$ we define the orthogonally additive operator $S_{\pi}:L_{\pi}\to F$ which extends the equality $S_{\pi}(x_i) = \varphi(x_i)$ for $i=1,\dots,n$ to $L_{\pi}$ by the following rule
\begin{equation} \label{eq:ffhdj7}
S_{\pi}\Big(\sum_{i=1}^n \lambda_i x_i \Big)=\sum_{i=1}^{n}|\lambda_{i}|S_{\pi}(x_i).
\end{equation}
Observe that, if $y \in L_\pi$ with $|y| \leq e$ then $y = \sum_{i=1}^n \lambda_i x_i$ with $|\lambda_i| \leq 1$ for all $i$. Hence by \eqref{eq:ffhdj7},
\begin{equation} \label{eq:ffhdj75}
S_\pi (y) \leq \sum_{i=1}^n S_\pi (x_i) = \sum_{i=1}^n \varphi(x_i) = T(e).
\end{equation}
Since $\varphi$ is a Boolean homomorphism on $\mathfrak{F}_e$, we have that if $\pi_{1}\leq\pi_{2}$ in $\mathfrak{F}_e$ then $S_{\pi_{2}}|_{L_{\pi_{1}}}=S_{\pi_{1}}$.
Thus, we can define the following orthogonally additive operator $S_{1}:G\to F$
\begin{equation} \label{eq:ffhdj8}
S_1(x)=\lim_{\pi\in\Pi_{e}}S_{\pi}(x),\,\,x\in G.
\end{equation}

Moreover, since $\varphi$ is a Boolean homomorphism, $S_1$ is a disjointness preserving operator.

Assume now that $m \in \mathbb N$ and $y \in E$ with $|y| \leq me$. Then $m^{-1} y \leq e$ and for each $\pi \in \Pi_e$ by \eqref{eq:ffhdj75} and \eqref{eq:ffhdj7}, $m^{-1} S_\pi(y) = S_\pi(m^{-1}y) \leq T(e)$, that is, $S_\pi(y) \leq m T(e)$. By arbitrariness of $\pi \in \Pi_e$ and \eqref{eq:ffhdj8}, we obtain
\begin{equation} \label{eq:ffhdj85}
(\forall m \in \mathbb N)(\forall y \in G) \, \Bigl( |y| \leq me \,\,\, \Rightarrow \,\,\, S_1(y) \leq m T(e) \Bigr).
\end{equation}

Let $I_e = \bigl\{x \in E: \, (\exists m \in \mathbb N) (|x| \leq me) \bigr\}$ be the order ideal generated by $e$. Fix any $x \in I_e$. Then $|x| \leq me$ for some $m \in \mathbb N$. Then for any $y \in G$ with $|y| \leq |x|$ we have $|y| \leq me$, and $S_1(y) \leq m T(e)$ by \eqref{eq:ffhdj85}. Thus, the set $\{S_{1}y:\,\,|y|\leq|x|,\,y\in G\}$ is order bounded in $F$ by $mT(e)$. Since $F$ is Dedekind complete, there exists
\begin{equation} \label{eq:ffhdj86}
S_2(x) = \sup\{S_1(y):\,\,|y|\leq|x|,\,y\in G\}
\end{equation}
with $S_2(x) \leq m T(e)$. Moreover, if $A \subseteq I_e$ is an order bounded set by $me$ then $S_2(y) \leq m T(e)$ for all $y \in A$.

We show that formula \eqref{eq:ffhdj86} defines a positive abstract Uryson operator $S_2: I_e \to F$. By the above, $S_2$ is an order bounded map. Since $S_1 \geq 0$, we have that $S_2(x) \geq 0$ for all $x \in I_e$. So, it remains to show the orthogonal additivity of $S_2$. Let $x_1, x_2 \in I_e$ and $x_1 \bot x_2$. Then
$$
S_2(x_1 + x_2)=\sup\{S_1 (y):\,|y| \leq |x_1 + x_2| = |x_1| + |x_2|, \, y \in G\}.
$$
Let $y\in G$ be any element with $|y|\leq|x_1| + |x_2|$. By the Riesz decomposition property \cite[Theorem~1.15]{Al}, there exist $y_1, y_2 \in G$ so that $y = y_1 + y_2$ and $|y_i|\leq|x_i|$ for $i \in \{1,2\}$. In particular, $y_1 \bot y_2$. Then
$$
S_1 (y) = S_1(y_1 + y_2) = S_1 (y_1) + S_1 (y_2) \leq S_2 (x_1) + S_2 (x_2).
$$
Passing to the supremum, we obtain $S_2(x_1 + x_2)\leq S_2(x_1) + S_2(x_2)$.

On the other hand, if $y_1, y_2 \in G$ satisfy $|y_i| \leq |x_i|$ for $i \in \{1,2\}$ then $y_1 \bot y_2$ and hence
$$
S_1(y_1) + S_1(y_2) = S_1(y_1 + y_2) \leq S_2(x_1 + x_2).
$$
Passing to the supremum first over $y_1$ and then over $y_2$, we get $S_2(x_1) + S_2(x_2) \leq S_2(x_1 + x_2)$. Thus, $S_2: I_e \to F$ is a positive abstract Uryson operator.

Let $T|_{I_e}$ be the restriction of $T$ to the sublattice $I_e$. Set $S_3 = T|_{I_e} \wedge S_{2}$. Using the fact that the set of all laterally continuous abstract Uryson operators from the vector lattice $I_e$ to the Dedekind complete vector lattice $F$ is a band in $\mathcal{U}(E,F)$ \cite[Proposition~3.8]{Maz-1}, we have that $S_3: I_e \to F$ is a positive laterally continuous abstract Uryson operator.

Denote by $B_e$ the band in $E$ generated by $e$. Now we extend $S_3$ from $I_e$ to a positive laterally continuous abstract Uryson operator $S_4: B_e \to F$. For any $x \in B_e$ we set
\begin{equation} \label{ccn7}
S_4(x) = \sup\{S_3(y):\,\, y \sqsubseteq x, \, y \in I_e\}.
\end{equation}
The supremum exists because $F$ is Dedekind complete and $S_3(y) \leq T(y) \leq T(x)$ for each $y \sqsubseteq x$, $y \in I_e$ by the inequality $S_3 \leq T|_{I_e}$. By the above, $T_4(x) = T_3(x)$ for all $x \in I_e$ and $S_4 (x) \leq T(x)$ for all $x \in B_e$.

Show that $S_4$ is orthogonally additive. Let $x_1, x_2 \in B_e$ and $x_1 \bot x_2$. Then
$$
S_4(x_1 + x_2) = \sup \{S_3 (y): \, y \sqsubseteq x_1 + x_2, \, y \in I_e\}.
$$
Let $y \in I_e$ satisfy $y \sqsubseteq x_1 + x_2$. Since $I_e$ is an ideal in $E$, we have that $y_i = (y^+ \wedge x_i^+) - (y^- \wedge x_i^-) \in I_e$, $i \in \{1,2\}$. Observe that $y_i \sqsubseteq x_i$ and $y_1 \bot y_2$, hence,
$$
S_3(y) = S_3(y_1) + S_3(y_2) \leq S_4(x_1) + S_4(x_2).
$$
Passing to the supremum, we obtain $S_4(x_1 + x_2) \leq S_4(x_1) + S_4(x_2)$.

On the other hand, if $y_1, y_2 \in I_e$ satisfy $y_i \sqsubseteq x_i$ for $i \in \{1,2\}$ then $y_1 \bot y_2$ and hence
$$
S_3(y_1) + S_3(y_2) = S_3(y_1 + y_2) \leq S_4(x_1 + x_2).
$$
Passing to the supremum firstly over $y_1$ and secondly over $y_2$, we get $S_4(x_1) + S_4(x_2) \leq S_4(x_1 + x_2)$. Thus, $S_4: B_e \to F$ is a positive abstract Uryson operator.

Now we show that $S_4$ is laterally continuous. Observe that, if $T_1: E_1 \to F_1$ is a positive abstract Uryson operator then the lateral continuity of $T_1$ at $z \in E_1$ is equivalent to the following property: if $(z_\alpha)$ is a $\sqsubseteq$-increasing net of fragments of $z$ with $z_\alpha \stackrel{\rm o}{\longrightarrow} z$ then $T_1(z) \leq \sup_\alpha T_1(z_\alpha)$. Indeed, the inequality $T_1(z) \geq \sup_\alpha T_1(z_\alpha)$ is also true because $T_1(z_\alpha) \leq T_1(z)$ for all indices $\alpha$, and $T_1(z_\alpha) \stackrel{\rm o}{\longrightarrow} \sup_\alpha T_1(z_\alpha)$ because the net $T_1(z_\alpha)$ increases.

So, let $x \in B_e$, let $(x_\alpha)$ be a net, $x_\alpha \sqsubseteq x_\beta \sqsubseteq x$ for $\alpha < \beta$ and $x_\alpha \stackrel{\rm o}{\longrightarrow} x$. Fix any $y \in I_e$ with $y \sqsubseteq x$ and set $y_\alpha = (y^+ \wedge x_\alpha^+) - (y^- \wedge x_\alpha^-)$. Then $y_\alpha \sqsubseteq x_\alpha$, $y_\alpha \sqsubseteq y$ for all $\alpha$, $y_\alpha \stackrel{\rm o}{\longrightarrow} y$ and $y_\alpha \in I_e$. By the lateral continuity of $S_3$,
$$
S_3(y) = \sup_\alpha S_3(y_\alpha) \leq \sup_\alpha \sup \{S_3 (z): \, z \sqsubseteq x_\alpha, \, z \in I_e\} = \sup_\alpha S_4(x_\alpha).
$$
By arbitrariness of $y$,
$$
S_4(x) = \sup\{S_3(y):\,\, y \sqsubseteq x, \, y \in I_e\} \leq \sup_\alpha S_4(x_\alpha),
$$
and the lateral continuity of $S_4$ is proved.

Finally, we define an abstract Uryson operator $S:E\to F$ by $S = S_4 \circ P_e$ where $P_e$ is the band projection of $E$ onto $B_e$.

Show that $S$ is a disjointness preserving operator. Assume, on the contrary, that $w = Sx \wedge Sy > 0$ for some $x,y \in E$ with $x \bot y$. Set $u = P_e x$, $v = P_e y$. Then $u \bot v$ and $S_4(u) \wedge S_4(v) = w > 0$. Using \eqref{ccn7}, we find $u_1 \sqsubseteq u$ and $v_1 \sqsubseteq v$ so that $u_1, v_1 \in I_e$ and $w_1 = S_3(u_1) \wedge S_3(v_1) > 0$. Since $S_2 \geq S_3$, we obtain $w_2 = S_2(u_1) \wedge S_2(v_1) \geq w_1 > 0$. By \eqref{eq:ffhdj86}, there are $u_2, v_2 \in G$ such that $|u_2| \leq |u_1$, $|v_2| \leq |v_1$ and $w_3 = S_1(u_2) \wedge S_1(v_2) > 0$. Since $|u_2| \leq |u_1| \leq |u|$ and analogously $|v_2| \leq |v|$, one has that $u_2 \bot v_2$. This is impossible because $S_1$ is a disjointness preserving operator.

It remains to observe that $S(x) = S_4(P_e x) \leq T(P_e x) \leq T(x)$ for all $x \in E$.
\end{proof}

\section{Pseudo-embeddings and a generalization of Rosenthal's decomposition theorem to abstract Uryson operators}

Let $E$ be a vector lattice. For an arbitrary index set $J$ a series $\sum_{j \in J} x_j$ of elements $x_j \in E$ is called \textit{order convergent} and the family $(x_j)_{j \in J}$ is called \textit{order summable} if the net $(y_s)_{s \in J^{< \omega}}$, $y_s = \sum_{j \in s} x_j$ order converges to some $y_0 \in E$, where $J^{< \omega}$ is the net of all finite subsets $s \subseteq J$ ordered by inclusion. In this case $y_0$ is called the \textit{order sum of the series} $\sum_{j \in J} x_j$ and we write $y_0 = \sum_{j \in J} x_j$. A series $\sum_{j \in J} x_j$ is called \textit{absolutely order convergent} and the family $(x_j)_{j \in J}$ is called \textit{absolutely order summable} if the series $\sum_{j \in J} |x_j|$ order converges.

Let $A$ be a solid subset of a vector lattice $E$, that is, for every $x \in E$ and $y \in A$ the inequality $|x| \leq |y|$ implies $x \in A$. We denote by $\mbox{Abs}\,(A)$ the set of all sums of absolutely order convergent series $\sum_{j \in J} x_j$ of elements $x_j \in A$. For any subset $B \subseteq E$ by $\mbox{Band}(A)$ we denote the least band in $E$ containing~$A$.

\begin{lemma}[Theorem~1.25, \cite{PR}] \label{RTL}
Let $A$ be a solid subset of a Dedekind complete vector lattice $E$. Then
\begin{enumerate}
  \item[$(i)$] $A^d =  \{x \in E: \,\,\, \mbox{for all} \,\, y \in A, \,\,\,\,\, 0 \leq y \leq |x| \,\,\, \mbox{implies} \,\,\, y = 0  \}$;
  \item[$(ii)$] ${\rm Band} \,(A) = {\rm Abs} \,(A)$.
\end{enumerate}
In particular, $E = {\rm Abs}\,(A) \oplus A^d$ is a decomposition into mutually complemented bands.
\end{lemma}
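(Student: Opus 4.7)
The plan is to handle (i) by direct computation from solidity, prove (ii) via two inclusions (the nontrivial one by a Zorn argument that leverages (i)), and then deduce the projection-band decomposition from Dedekind completeness.

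For (i), both inclusions use solidity of $A$. If $x \in A^d$ and $y \in A$ with $0 \leq y \leq |x|$, then disjointness gives $y = |y| \wedge |x| = 0$. Conversely, if $x$ lies in the right-hand set and $y \in A$, the element $z = |x| \wedge |y|$ belongs to $A$ by solidity, and the hypothesis applied to $z$ (noting $0 \leq z \leq |x|$) forces $z = 0$, so $x \perp y$.

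For (ii), the inclusion $\mathrm{Abs}(A) \subseteq \mathrm{Band}(A)$ is immediate: every finite partial sum of a series from $A$ lies in the order ideal generated by $A$, which is contained in $\mathrm{Band}(A)$, and bands are order closed. For the reverse, it suffices to prove $A^{dd} \subseteq \mathrm{Abs}(A)$, since $\mathrm{Band}(A) = A^{dd}$ in any Archimedean vector lattice. Splitting $x = x^+ - x^-$ reduces the problem to $x \in A^{dd} \cap E^+$. I would then apply Zorn's lemma to the poset $\mathcal{F}$ of all families $(a_j)_{j \in J}$ in $A^+$ satisfying $\sum_{j \in s} a_j \leq x$ for every finite $s \subseteq J$, ordered by extension. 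A maximal family has an increasing, order-bounded net of partial sums, so Dedekind completeness provides the sum $y = \sum_{j \in J} a_j \in \mathrm{Abs}(A)$ with $y \leq x$. If $y < x$, the difference $u := x - y$ is strictly positive and lies in $A^{dd}$; by (i), there exists $c \in A^+$ with $0 < c \leq u$, and the estimate $c + \sum_{j \in s} a_j \leq c + y \leq x$ shows that the family can be extended by $c$, contradicting maximality. Hence $y = x \in \mathrm{Abs}(A)$.

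The step I expect to be most delicate is the choice of the partial order on $\mathcal{F}$: the naive attempt to work with \emph{pairwise disjoint} families breaks down because the witness $c$ produced by (i) need not be disjoint from the already chosen $a_j$'s. Relaxing the constraint to ``finite partial sums stay below $x$'' is precisely what lets (i) drive the adjunction step. For the final claim, Dedekind completeness forces every band, in particular $B := \mathrm{Band}(A) = \mathrm{Abs}(A)$, to be a projection band, yielding $E = B \oplus B^d$; the general identity $B^d = A^{ddd} = A^d$ rewrites this as $E = \mathrm{Abs}(A) \oplus A^d$, which is a decomposition into mutually complemented bands.
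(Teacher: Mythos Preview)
Your argument is correct. Note, however, that the paper does not supply its own proof of this lemma: it is quoted verbatim as Theorem~1.25 of \cite{PR} and used as a black box, so there is no ``paper's proof'' to compare against. Your route (solidity for (i), then a Zorn/exhaustion argument driven by (i) for the inclusion $A^{dd}\subseteq\mathrm{Abs}(A)$, and finally the projection-band decomposition from Dedekind completeness) is the standard one and matches what one finds in \cite{PR}. Two minor remarks: the reduction to positive $x$ in (ii) tacitly uses that $\mathrm{Abs}(A)$ is closed under differences, which follows because the union of two absolutely order summable families indexed disjointly is again absolutely order summable; and the identity $\mathrm{Band}(A)=A^{dd}$, while true in any Archimedean lattice, is in any case immediate here from Dedekind completeness.
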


Let $E$ and $F$ be vector lattices. By $\mathcal U_{dpo}(E,F)$ we denote the set of all disjointness preserving abstract Uryson operators $T:E\to F$.

\begin{definition} \label{PA}
Let $E,F$ be vector lattices with $F$ Dedekind complete. An operator $T \in \mathcal U(E,F)$ is called \textit{pseudo embedding} if there exists an absolutely order summable family $(T_j)_{j \in J}$ in $\mathcal U_{dpo}(E,F)$ such that $T = \sum_{j \in J} T_j$.
\end{definition}

The name of ``pseudo embedding'' is due to Rosenthal's theorem concerning operators on $L_1$ which asserts that a non-zero operator is a pseudo embedding if and only if it is a near isometric embedding when restricted to a suitable $L_1(A)$-subspace \cite[Theorem~7.39]{PR}.

The set of all pseudo embeddings from $\mathcal U(E,F)$ will be denoted $\mathcal U_{pe}(E,F)$. Thus, $\mathcal U_{pe}(E,F) = \mbox{Abs} \,  (\mathcal U_{dpo}(E,F)  )$ by definition. The set of all pseudo narrow operators $T \in \mathcal U(E,F)$ will be denoted $\mathcal U_{pn}(E,F)$.

\begin{thm} \label{RTRO}
Let $E, F$ be vector lattices with $F$ Dedekind complete. Then
\begin{enumerate}
  \item[$(i)$] $\mathcal U_{dpo}(E,F)$ is solid in $\mathcal U(E,F)$;
  \item[$(ii)$] ${\rm Band}\, \bigl(\mathcal U_{dpo}(E,F) \bigr) = \mathcal U_{pe}(E,F)$;
  \item[$(iii)$] $\mathcal U_{dpo}(E,F)^d = \mathcal U_{pn}(E,F)$;
  \item[$(iv)$] $\mathcal U_{pe}(E,F)$ and $\mathcal U_{pn}(E,F)$ are mutually complemented bands, hence \\ $\mathcal U(E,F) = \mathcal U_{pe}(E,F) \oplus \mathcal U_{pn}(E,F).$
\end{enumerate}
\end{thm}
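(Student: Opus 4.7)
The plan is to apply Lemma~\ref{RTL} with $A = \mathcal{U}_{dpo}(E,F)$ viewed as a subset of the ambient Dedekind complete vector lattice $\mathcal{U}(E,F)$, whose Dedekind completeness is furnished by Theorem~\ref{fjjjjjg}. Once I establish assertion $(i)$ that $\mathcal{U}_{dpo}(E,F)$ is solid, assertions $(ii)$, $(iii)$, $(iv)$ will drop out essentially by matching the three clauses of Lemma~\ref{RTL} with the definitions of pseudo-embedding and pseudo-narrow operator.

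The heart of the argument is $(i)$, and the key intermediate step is to show that $T \in \mathcal{U}_{dpo}(E,F)$ implies $|T| \in \mathcal{U}_{dpo}(E,F)$. Given $x \bot y$ in $E$ and arbitrary partitions $x = \bigsqcup_{i=1}^n u_i$ and $y = \bigsqcup_{j=1}^m v_j$ in $\Pi_x$ and $\Pi_y$ respectively, every pair satisfies $u_i \bot v_j$, so the disjointness preservation of $T$ yields $T(u_i) \bot T(v_j)$ and hence $|T(u_i)| \bot |T(v_j)|$. Consequently the finite sums $\sum_{i=1}^n |T(u_i)|$ and $\sum_{j=1}^m |T(v_j)|$ are disjoint, and by Theorem~\ref{fjjjjjgg}$(3)$ these sums increase to $|T|(x)$ and $|T|(y)$ respectively. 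Since band projections in $F$ preserve order limits, we get $|T|(x) \bot |T|(y)$. Now if $S \in \mathcal{U}(E,F)$ satisfies $|S| \leq |T|$, then for any $x \bot y$ Theorem~\ref{fjjjjjg}$(5)$ gives $|S(x)| \leq |S|(x) \leq |T|(x)$ and similarly $|S(y)| \leq |T|(y)$, so $S(x) \bot S(y)$. Hence $S \in \mathcal{U}_{dpo}(E,F)$, which proves $(i)$.

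Assertion $(ii)$ is immediate from Lemma~\ref{RTL}$(ii)$, which gives ${\rm Band}(\mathcal{U}_{dpo}(E,F)) = {\rm Abs}(\mathcal{U}_{dpo}(E,F))$, the latter set being $\mathcal{U}_{pe}(E,F)$ by Definition~\ref{PA}. Assertion $(iii)$ is obtained by comparing Lemma~\ref{RTL}$(i)$, which identifies $\mathcal{U}_{dpo}(E,F)^d$ with the set of those $T \in \mathcal{U}(E,F)$ admitting no nonzero $S \in \mathcal{U}_{dpo}(E,F)$ with $0 \leq S \leq |T|$, to the definition of pseudo-narrow operator; these conditions coincide verbatim. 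Finally, $(iv)$ is the concluding decomposition of Lemma~\ref{RTL} rewritten using $(ii)$ and $(iii)$.

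The only substantive obstacle is the verification that $|T|$ preserves disjointness, because it rests on the directed-system formula for $|T|$ from Theorem~\ref{fjjjjjgg}$(3)$ rather than on the purely lattice-theoretic supremum formula of Theorem~\ref{fjjjjjg}$(3)$; the rest of the proof is a translation of Lemma~\ref{RTL} through the relevant definitions.
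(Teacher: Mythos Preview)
Your proof is correct and follows essentially the same strategy as the paper: reduce everything to $(i)$ via Lemma~\ref{RTL}, establish first that $|T|\in\mathcal U_{dpo}(E,F)$, and then conclude $S\in\mathcal U_{dpo}(E,F)$ from $|S(x)|\le|S|(x)\le|T|(x)$. The only difference is in the sub-step showing $|T|(x)\bot|T|(y)$: you pass to order limits through the directed system of Theorem~\ref{fjjjjjgg}(3), whereas the paper instead observes that for disjointness-preserving $T$ and any decomposition $x=u\sqcup v$ one has $|T(u)-T(v)|=|T(x)|$, so the supremum defining $|T|(x)$ already lives in $\{|T(y)|\}^\perp$; both arguments are valid and equally short.
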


\begin{proof}
By Lemma~\ref{RTL}, it is enough to prove (i). Suppose $S \in \mathcal U(E,F)$, $T \in \mathcal U_{dpo}(E,F)$ and $|S| \leq |T|$. Our goal is to prove that $S \in \mathcal U_{dpo}(E,F)$. First we prove that $|T| \in \mathcal U_{dpo}(E,F)$. Let $x,y \in E$ and $x\bot y$. By (1) of Theorem~\ref{fjjjjjg},
$$
|T|(x) = \sup \{T(u) - T(v): \,\, x = u \sqcup v\};
$$
$$
|T|(y) = \sup\{T(w) - T(z): \,\, y = w \sqcup z\}.
$$

Let $x = u \sqcup v$ and $y = w \sqcup z$. Since $T$ is disjointness preserving, $|T(u) - T(v)| = |T(u) + T(v)| = |T(u+v)| = |T(x)|$. Analogously, $|T(w) - T(z)| = |T(y)|$. Hence,
$$
|T(u) - T(v)| \wedge |T(w) - T(z)| = |T(x)| \wedge |T(y)| = 0.
$$
Passing to the supremum, we obtain $|T|(x)\bot|T|(y)$. Thus, $|T| \in \mathcal U_{dpo}(E,F)$.

Now we prove that $S \in L_{dpo}(E,F)$. Let $x_1, x_2 \in E$, $ x \bot \, y$. Since $|S x_i| \leq |S| \, |x_i| \leq |T| \, |x_i|$, $i = 1,2,$ we obtain that
$$
0 \leq |S x_1| \wedge |S x_2| \leq |T| \, |x_1| \wedge |T| \, |x_2| = 0.
$$
Thus, $S x_1 \bot \, S x_2$.
\end{proof}

By \cite[Proposition~3.8]{Maz-1}, the set $\mathcal U^{lc}(E,F)$ of all laterally continuous abstract Uryson operators from $E$ to $F$ is a band in $\mathcal U(E,F)$. Since the intersection of bands is a band, we obtain the following version of Theorem~\ref{RTRO} for order continuous operators.

\begin{cor} \label{RTROcor}
Let $E, F$ be vector lattices with $F$ Dedekind complete. Then
\begin{enumerate}
  \item[$(i)$] the set $\mathcal U_{dpo}^{lc}(E,F)$ of all disjointness preserving laterally continuous operators is solid in $\mathcal U^{lc}(E,F)$;
  \item[$(ii)$] $\mbox{\rm Band}\, \bigl(\mathcal U_{dpo}^{lc}(E,F) \bigr) = \mathcal U_{pe}^{lc}(E,F)$;
  \item[$(iii)$] $\mathcal U_{dpo}^{lc}(E,F)^d = \mathcal U_{pn}^{lc}(E,F)$;
  \item[$(iv)$] the sets $\mathcal U_{pe}^{lc}(E,F)$ and $\mathcal U_{pn}^{lc}(E,F)$ are mutually complemented bands, hence $\mathcal U^{lc}(E,F) = \mathcal U_{pe}^{lc}(E,F) \oplus \mathcal U_{pn}^{lc}(E,F).$
\end{enumerate}
\end{cor}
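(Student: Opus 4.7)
The plan is to apply Lemma~\ref{RTL} inside the band $\mathcal U^{lc}(E,F)$, which is itself a Dedekind complete vector lattice by \cite[Proposition~3.8]{Maz-1}, and then match the resulting decomposition against the one obtained by intersecting $\mathcal U^{lc}(E,F)$ with the direct sum provided by Theorem~\ref{RTRO}$(iv)$.

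Part $(i)$ is essentially automatic: if $S,T\in\mathcal U^{lc}(E,F)$ with $|S|\leq|T|$ and $T\in\mathcal U_{dpo}^{lc}(E,F)$, then Theorem~\ref{RTRO}$(i)$ delivers $S\in\mathcal U_{dpo}(E,F)$, and hence $S\in\mathcal U_{dpo}^{lc}(E,F)$. With $(i)$ in hand, Lemma~\ref{RTL} applied inside $\mathcal U^{lc}(E,F)$ gives
$$
\mathcal U^{lc}(E,F) = \mbox{Abs}\bigl(\mathcal U_{dpo}^{lc}(E,F)\bigr) \oplus \bigl(\mathcal U_{dpo}^{lc}(E,F)\bigr)^{d}
$$
as mutually complemented bands, with the first summand equal to $\mbox{Band}\bigl(\mathcal U_{dpo}^{lc}(E,F)\bigr)$.

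For $(ii)$, let $P$ denote the band projection of $\mathcal U(E,F)$ onto $\mathcal U^{lc}(E,F)$. The inclusion $\mbox{Abs}\bigl(\mathcal U_{dpo}^{lc}(E,F)\bigr)\subseteq\mathcal U_{pe}^{lc}(E,F)$ is immediate from the definitions together with the order-closedness of $\mathcal U^{lc}(E,F)$. For the reverse inclusion, pick $T\in\mathcal U_{pe}^{lc}(E,F)$ and represent $T=\sum_{j\in J}T_j$ with $(T_j)\subseteq\mathcal U_{dpo}(E,F)$ absolutely order summable. The key observation is that $|PT_j|\leq|T_j|$ together with solidity of $\mathcal U_{dpo}(E,F)$ (Theorem~\ref{RTRO}$(i)$) forces $PT_j\in\mathcal U_{dpo}^{lc}(E,F)$; the family $(PT_j)$ remains absolutely order summable, with sum $PT=T$ by order continuity of $P$, whence $T\in\mbox{Abs}\bigl(\mathcal U_{dpo}^{lc}(E,F)\bigr)$.

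Parts $(iii)$ and $(iv)$ then follow routinely. Intersecting the decomposition of Theorem~\ref{RTRO}$(iv)$ with the band $\mathcal U^{lc}(E,F)$---using that for any $T\in\mathcal U^{lc}(E,F)$ the band projections of $T$ onto $\mathcal U_{pe}(E,F)$ and $\mathcal U_{pn}(E,F)$ are bounded in modulus by $|T|$, and hence lie back in $\mathcal U^{lc}(E,F)$ by solidity---yields $\mathcal U^{lc}(E,F)=\mathcal U_{pe}^{lc}(E,F)\oplus\mathcal U_{pn}^{lc}(E,F)$. Comparing this with the displayed formula above and invoking $(ii)$, uniqueness of the band-complement forces $\bigl(\mathcal U_{dpo}^{lc}(E,F)\bigr)^{d}=\mathcal U_{pn}^{lc}(E,F)$, and $(iv)$ is just the restatement of the decomposition. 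The main technical hurdle is the reverse inclusion in $(ii)$, where one must pass from an arbitrary representation $T=\sum T_j$ with $T_j\in\mathcal U_{dpo}(E,F)$ to one with $T_j\in\mathcal U_{dpo}^{lc}(E,F)$; this is precisely what the solidity assertion of Theorem~\ref{RTRO}$(i)$ secures through the band projection $P$.
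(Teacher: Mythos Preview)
Your argument is correct and follows the same route as the paper, which simply observes that $\mathcal U^{lc}(E,F)$ is a band in $\mathcal U(E,F)$ (by \cite[Proposition~3.8]{Maz-1}) and that intersecting the band decomposition of Theorem~\ref{RTRO} with this band yields the corollary. You supply considerably more detail than the paper's one-line justification; in particular, your band-projection argument for the reverse inclusion in $(ii)$ makes explicit a point the paper leaves to the reader.
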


Here, $\mathcal U_{dpo}^{lc}(E,F)$, $\mathcal U_{pe}^{lc}(E,F)$ and $\mathcal U_{pn}^{lc}(E,F)$   denote the corresponding intersections of $\mathcal U_{dpo}(E,F)$, $\mathcal U_{pe}(E,F)$ and $\mathcal U_{pn}(E,F)$ with $\mathcal U^{lc}(E,F)$.

\section{Boolean maps}

We need some information about Boolean maps and homomorphisms.

\begin{definition}
Let $A,B$ be lattices (not necessarily vector spaces). A map
$\psi:A\to B$ is said to be
\begin{enumerate}
  \item[1)] $\vee$-preserving if $\psi(x\vee y)=\psi(x)\vee\psi(y)$ for all $x,y\in A$.
  \item[2)] $\wedge$-preserving if $\psi(x\wedge y)=\psi(x)\wedge\psi(y)$ for all $x,y\in A$.
  \item[3)] a lattice homomorphism provided it is both $\vee$-preserving and $\wedge$-preserving.
\end{enumerate}
\end{definition}
If, moreover, $A$ and $B$ are Boolean algebras then in each of the
above definitions we additionally claim that $\psi(\mathbf{0}_{A})=\mathbf{0}_{B}$ and $\psi(\mathbf{1}_{A})=\mathbf{1}_{B}$. In this case we insert the word ``Boolean'' (a Boolean $\vee$-preserving map; a Boolean $\wedge$-preserving map; a Boolean homomorphism).

We need the following two known facts.

\begin{thm}[Monteiro's theorem \cite{PR},Theorem~10.33] \label{th:Mont}
Let $A,B$ be Boolean algebras with $B$ Dedekind complete and $\varphi:A\to B$
be a Boolean $\vee$-preserving map. Then every Boolean homomorphism $\psi_{0}:A_{0}\to B$
of a Boolean subalgebra $A_{0}\subseteq A$ with $\psi_{0}(x)\leq\varphi(x)$  for each $x\in A_0$ can be extended to a Boolean homomorphism $\psi:A\to B$ with $\psi(x)\leq\varphi(x)$ for each
$x\in A$.
\end{thm}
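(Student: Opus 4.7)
The plan is the standard Zorn's lemma extension argument, with care needed in the one-step extension because $\varphi$ is only $\vee$-preserving (not $\wedge$-preserving). First, I would consider the poset $\mathcal{P}$ of all pairs $(A', \psi')$, where $A_0 \subseteq A' \subseteq A$ is a Boolean subalgebra and $\psi' : A' \to B$ is a Boolean homomorphism extending $\psi_0$ with $\psi'(x) \leq \varphi(x)$ for every $x \in A'$, ordered by extension. Any chain in $\mathcal{P}$ admits an obvious upper bound obtained by taking unions, so Zorn's lemma produces a maximal element $(A_1, \psi_1)$.

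To conclude $A_1 = A$, suppose for contradiction there is $a \in A \setminus A_1$; let $A_2$ be the Boolean subalgebra generated by $A_1 \cup \{a\}$. Every element of $A_2$ admits a canonical form $z = (x \wedge a) \vee (y \wedge \neg a)$ with $x, y \in A_1$, and two such representations determine the same $z$ if and only if $x \wedge a = x' \wedge a$ and $y \wedge \neg a = y' \wedge \neg a$. I would extend $\psi_1$ to $A_2$ by
$$
\psi_1(z) := \bigl(\psi_1(x) \wedge \psi_1(a)\bigr) \vee \bigl(\psi_1(y) \wedge \neg \psi_1(a)\bigr),
$$
for a suitably chosen value $\psi_1(a) \in B$.

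The crux is the choice of $\psi_1(a)$. Using Dedekind completeness of $B$, set
$$
L := \bigvee_{y \in A_1} \bigl(\psi_1(y) \wedge \neg \varphi(y \wedge \neg a)\bigr), \qquad U := \bigwedge_{x \in A_1} \bigl(\neg \psi_1(x) \vee \varphi(x \wedge a)\bigr).
$$
For any $x, y \in A_1$ one has $x \wedge y \leq (x \wedge a) \vee (y \wedge \neg a)$ in $A$, so by monotonicity of $\varphi$ (which follows from $\vee$-preservation) combined with $\psi_1 \leq \varphi$ on $A_1$ one gets $\psi_1(x \wedge y) \leq \varphi(x \wedge a) \vee \varphi(y \wedge \neg a)$, which rearranges to $\psi_1(y) \wedge \neg \varphi(y \wedge \neg a) \leq \neg \psi_1(x) \vee \varphi(x \wedge a)$. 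Taking supremum over $y$ and infimum over $x$ yields $L \leq U$, and one may pick any $\psi_1(a) \in [L, U]$.

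The remaining verification is routine but delicate. The inequalities $L \leq \psi_1(a)$ and $\psi_1(a) \leq U$ translate, via the infinite distributive law that holds in any Dedekind complete Boolean algebra, into $\psi_1(y) \wedge \neg \psi_1(a) \leq \varphi(y \wedge \neg a)$ and $\psi_1(x) \wedge \psi_1(a) \leq \varphi(x \wedge a)$ for all $x, y \in A_1$. The first pair (applied to $x \triangle x'$ with $x \wedge a = x' \wedge a$, and dually) gives independence from the choice of canonical representation, hence well-definedness; together they yield the domination $\psi_1(z) \leq \varphi(z)$ on $A_2$ via $\vee$-preservation of $\varphi$. Preservation of $\vee$, $\wedge$, and $\neg$ by the extension reduces to a direct Boolean calculation on the canonical form. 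The resulting pair strictly extends $(A_1, \psi_1)$ in $\mathcal{P}$, contradicting maximality, so $A_1 = A$. The main obstacle I expect is the interleaved verification of $L \leq U$ and the use of these bounds to obtain well-definedness and domination simultaneously; once that is packaged cleanly, the Zorn step and the Boolean calculations are standard.
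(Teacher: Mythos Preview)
The paper does not contain a proof of this statement: Theorem~\ref{th:Mont} is quoted from \cite{PR}, Theorem~10.33, as a known result (Monteiro's theorem) and used as a black box in the proof of Theorem~\ref{th:ddddjfhhd7}. So there is no ``paper's own proof'' to compare against.

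That said, your proposal is the standard and correct argument for Monteiro's theorem: a Zorn's lemma reduction to a one-step extension, with the choice of $\psi_1(a)$ sandwiched between the Dedekind-complete bounds $L$ and $U$. Your key inequality $x\wedge y \le (x\wedge a)\vee(y\wedge \neg a)$ is correct, and applying $\varphi$ (monotone by $\vee$-preservation) together with $\psi_1\le\varphi$ on $A_1$ indeed yields $\psi_1(x)\wedge\psi_1(y)\le\varphi(x\wedge a)\vee\varphi(y\wedge\neg a)$, which Boolean-rearranges to $L\le U$ exactly as you say. The well-definedness step via $x\triangle x'$ works because $(x\triangle x')\wedge a=0$ forces $\varphi((x\triangle x')\wedge a)=0$ (here you implicitly use $\varphi(\mathbf 0_A)=\mathbf 0_B$, which is part of the ``Boolean $\vee$-preserving'' hypothesis as defined in the paper), hence $\psi_1(x\triangle x')\wedge\psi_1(a)=0$; the domination $\psi_1(z)\le\varphi(z)$ then follows from $\vee$-preservation of $\varphi$ exactly as you outline. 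The remaining homomorphism verifications on the canonical form are routine. One small point worth making explicit in a write-up: the infinite distributive law you invoke to pass from $\psi_1(a)\le U$ and $L\le\psi_1(a)$ to the pointwise inequalities holds in any complete Boolean algebra, and $B$ is complete by hypothesis.
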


Recall that for the band projection $P_e x$ of an element $x \in E$ to the band generated by $e \in E^+$ we have the formula $P_e x = \bigvee_{n=1}^\infty (x \wedge ne) \in \mathfrak{F}_x$ \cite[Theorem~3.13]{Al}.

\begin{lemma}[\cite{MMP}, \cite{PR}, Lemma~10.38] \label{le:MMP222}
Let $F$ be a Dedekind  complete vector lattice, $f\in F^+$. Then the formula
\begin{equation} \label{eq:xxsre7}
\mathbf{1}_{f}(y) = f - P_{(f-y)^+} f
\end{equation}
defines a lattice homomorphism $\mathbf{1}_{f}:F^+ \to \mathfrak{F}_f$ such that $\mathbf{1}_{f}(y)\leq y$ for all $y\in F_{+}$, $\mathbf{1}_{f}(0)=0$ and $\mathbf{1}_{f}(f)=f$.
\end{lemma}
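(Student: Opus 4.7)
The plan is to verify the four pointwise properties first — they are immediate from the formula — and then to reduce the two lattice-homomorphism identities to algebra of band projections in the Dedekind complete lattice $F$. Write $g := (f-y)^+$, $P := P_g$ and $Q := I - P$, so that $\mathbf{1}_f(y) = Qf$. Then $\mathbf{1}_f(y) \in \mathfrak{F}_f$ because $Q$ is a band projection and $f \in F^+$; taking $y=0$ gives $g = f$ and $Qf = 0$; taking $y = f$ gives $g = 0$, $P = 0$ and $Qf = f$. For the bound $\mathbf{1}_f(y) \leq y$ I would use the disjointness $(f-y)^+ \perp (y-f)^+$ to place $(y-f)^+ \in B_g^d$, so
\[
Q(f-y) = Q(f-y)^+ - Q(y-f)^+ = 0 - (y-f)^+ \leq 0,
\]
and combining with $0 \leq Qy \leq y$ yields $Qf \leq Qy \leq y$.

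For the lattice-homomorphism property, set $g_i := (f-y_i)^+$. Using $f - (y_1 \vee y_2) = (f-y_1) \wedge (f-y_2)$ and $f - (y_1 \wedge y_2) = (f-y_1) \vee (f-y_2)$, together with the Riesz identities $(a\wedge b)^+ = a^+\wedge b^+$ and $(a\vee b)^+ = a^+\vee b^+$, I obtain
\[
(f - y_1 \vee y_2)^+ = g_1 \wedge g_2, \qquad (f - y_1 \wedge y_2)^+ = g_1 \vee g_2.
\]
So the two homomorphism claims reduce to showing $f - P_{g_1 \wedge g_2} f = (f - P_{g_1} f) \vee (f - P_{g_2} f)$ and $f - P_{g_1 \vee g_2} f = (f - P_{g_1} f) \wedge (f - P_{g_2} f)$.

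Next, since $F$ is Dedekind complete, the band projections $P_{g_1}$ and $P_{g_2}$ commute, the band generated by $g_1 \wedge g_2$ equals $B_{g_1} \cap B_{g_2}$, and the band generated by $g_1 \vee g_2$ equals $B_{g_1} + B_{g_2}$. This gives $P_{g_1 \wedge g_2} = P_{g_1} P_{g_2}$ and $P_{g_1 \vee g_2} = P_{g_1} + P_{g_2} - P_{g_1} P_{g_2}$. Applied to the positive element $f$, and using the identity $P_{g_1} P_{g_2} f = P_{g_1} f \wedge P_{g_2} f$ valid for commuting band projections acting on a positive vector, I get $P_{g_1 \wedge g_2} f = P_{g_1} f \wedge P_{g_2} f$ and $P_{g_1 \vee g_2} f = P_{g_1} f \vee P_{g_2} f$. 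Subtracting from $f$ and using the dualities $f - (a \wedge b) = (f-a) \vee (f-b)$ and $f - (a \vee b) = (f-a) \wedge (f-b)$ then delivers both lattice identities.

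The main obstacle I anticipate is the key algebraic identity $P_{g_1} P_{g_2} f = P_{g_1} f \wedge P_{g_2} f$, together with the companion fact $B_{g_1 \wedge g_2} = B_{g_1} \cap B_{g_2}$ for $g_1, g_2 \in F^+$. Both are standard in a Dedekind complete vector lattice: the first follows because $P_{g_1} f \wedge P_{g_2} f$ lies in both bands (each is solid) and is fixed by $P_{g_1} P_{g_2}$, while the second is verified by approximating any element of $B_{g_1} \cap B_{g_2}$ by elements of the form $x \wedge n(g_1 \wedge g_2)$. Once this piece is in place, everything else is routine bookkeeping.
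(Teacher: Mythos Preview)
Your argument is correct. The paper does not prove this lemma at all: it is quoted as a known result from \cite{MMP} and \cite[Lemma~10.38]{PR}, so there is no in-paper proof to compare against. Your verification of the pointwise properties is clean, and the reduction of the homomorphism identities to the band-projection algebra $P_{g_1\wedge g_2}=P_{g_1}P_{g_2}$, $P_{g_1\vee g_2}=P_{g_1}+P_{g_2}-P_{g_1}P_{g_2}$ (together with $P_{g_1}P_{g_2}f=P_{g_1}f\wedge P_{g_2}f$ on positive $f$) is the standard route and works exactly as you outline. The one place worth a sentence of extra care is the equality $B_{g_1\wedge g_2}=B_{g_1}\cap B_{g_2}$: your sketch via $x\wedge n(g_1\wedge g_2)=x\wedge ng_1\wedge ng_2$ is right, but you should make explicit that the double supremum $\sup_{n,m}(x\wedge ng_1\wedge mg_2)$ equals the diagonal supremum by monotonicity, which is what lets you collapse to a single index.
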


Now we are ready to prove the next auxiliary statement.

\begin{lemma} \label{le:djjjdjd75}
Let $E,F$ be vector lattices with $F$ Dedekind  complete. Let $T:E\to F$
be an abstract Uryson  operator and $e\in E$ be such that $f=\lambda_{T}(e)>0$. Then the formula
$\varphi(x) = \mathbf{1}_{f} \bigl(\lambda_{T}(x) \bigr)$, where $\mathbf{1}_{f}$ is the map defined by \eqref{eq:xxsre7}, defines a Boolean $\vee$-preserving map $\varphi:\mathfrak{F}_e \to \mathfrak{F}_f$ such that $\varphi(x)\leq|T|(x)$ for all $x \in \mathfrak{F}_e$.
\end{lemma}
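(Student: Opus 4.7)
The plan is to establish three properties of $\varphi$: the boundary values $\varphi(0) = 0$ and $\varphi(e) = f$; the pointwise domination $\varphi(x) \leq |T|(x)$; and Boolean $\vee$-preservation. The first two are immediate. By Lemma~\ref{le:MMP222}, $\mathbf{1}_f$ maps $F^+$ into $\mathfrak{F}_f$ with $\mathbf{1}_f(0) = 0$, $\mathbf{1}_f(f) = f$, and $\mathbf{1}_f(y) \leq y$. Combining this with the elementary estimate $\lambda_T(x) \leq |T(x)| \leq |T|(x)$ (obtained by inserting the trivial partition $\{x\} \in \Pi_x$ into \eqref{eq:amsldfkjh7} and applying Theorem~\ref{fjjjjjg}(5)) yields both the boundary values and the domination.

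For the $\vee$-preservation I will prove the stronger identity
\[
\lambda_T(x_1 \vee x_2) = \lambda_T(x_1) \vee \lambda_T(x_2) \qquad \text{for all } x_1, x_2 \in \mathfrak{F}_e.
\]
Once this is established, applying $\mathbf{1}_f$, which is a lattice homomorphism by Lemma~\ref{le:MMP222}, at once gives $\varphi(x_1 \vee x_2) = \varphi(x_1) \vee \varphi(x_2)$. I will reduce the identity via the Boolean decomposition $x_1 \vee x_2 = x_1 \sqcup (x_2 - x_1 \wedge x_2)$ to two auxiliary properties of $\lambda_T$: monotonicity on fragments, $y \sqsubseteq x \Rightarrow \lambda_T(y) \leq \lambda_T(x)$, and disjoint additivity, $\lambda_T(x_1 \sqcup x_2) = \lambda_T(x_1) \vee \lambda_T(x_2)$ for $x_1 \bot x_2$.

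Both auxiliary properties rest on the same fragment-splitting trick. For monotonicity, write $x = y \sqcup v$ and decompose each $w \in \pi \in \Pi_x$ as $w = w^y + w^v$ with $w^y \sqsubseteq y$ and $w^v \sqsubseteq v$; then $\pi_y = \{w^y : w \in \pi,\ w^y \neq 0\} \in \Pi_y$, and the pointwise bound $|T(w^y)| \leq |T(w)|$ (which holds for positive $T$, the setting in which the lemma is ultimately used in the converse of Theorem~\ref{thm:lambdapseudo}) gives $\bigvee_{\pi_y} |T| \leq \bigvee_\pi |T|$, hence $\lambda_T(y) \leq \lambda_T(x)$. For disjoint additivity, the $\geq$ direction is immediate from monotonicity applied to each inclusion $x_i \sqsubseteq x_1 \sqcup x_2$. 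For $\leq$, the partitions of the form $\pi = \pi_1 \cup \pi_2$ with $\pi_i \in \Pi_{x_i}$ are cofinal in $\Pi_{x_1 \sqcup x_2}$ via the same splitting, and on these
\[
\bigvee_{y \in \pi_1 \cup \pi_2} |T(y)| = \Bigl(\bigvee_{y \in \pi_1} |T(y)|\Bigr) \vee \Bigl(\bigvee_{y \in \pi_2} |T(y)|\Bigr).
\]
The infinite distributive law $a \vee \bigwedge_\beta b_\beta = \bigwedge_\beta (a \vee b_\beta)$, valid in the Dedekind complete lattice $F$, lets me interchange the two independent infima over $\pi_1,\pi_2$ with the supremum, producing $\lambda_T(x_1) \vee \lambda_T(x_2)$.

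The main obstacle is pinning down these two structural properties of $\lambda_T$. Monotonicity depends on the pointwise comparison of $|T|$ on a fragment with its split pieces, which requires positivity (or passing to $|T|$, under which the displayed bound $\varphi(x) \leq |T|(x)$ in the conclusion is still the correct one). The $\leq$ half of disjoint additivity is technically the most delicate step, as it rests jointly on the cofinality of compatible partitions in the directed set $\Pi_{x_1 \sqcup x_2}$ and on the infinite distributive law in $F$; once these ingredients are in place the rest of the argument is routine bookkeeping via the lattice homomorphism $\mathbf{1}_f$.
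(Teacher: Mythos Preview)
Your approach parallels the paper's: boundary values and the bound $\varphi\le|T|$ via Lemma~\ref{le:MMP222} and the trivial partition, then disjoint additivity $\lambda_T(x\sqcup y)=\lambda_T(x)\vee\lambda_T(y)$ via cofinality of product partitions $\pi_1\cup\pi_2$ in $\Pi_{x\sqcup y}$ together with the infinite distributive law in $F$, and finally the Boolean reduction $x_1\vee x_2=x_1\sqcup(x_2-x_1\wedge x_2)$.

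The one organizational difference: you prove monotonicity of $\lambda_T$ separately and use it both for the $\geq$ half of disjoint additivity and for passing to general joins, all at the level of $\lambda_T$. The paper instead establishes the disjoint identity for $\lambda_T$ in one computation, transfers it to $\varphi$ via the lattice homomorphism $\mathbf{1}_f$, and only then derives monotonicity of $\varphi$ from disjoint additivity of $\varphi$ by the purely Boolean step $\varphi(y)=\varphi\bigl((y-x)\sqcup x\bigr)=\varphi(y-x)\vee\varphi(x)\ge\varphi(x)$ for $x\sqsubseteq y$. This never asks whether $\lambda_T$ itself is monotone; once $\varphi$ is disjointly additive into a Boolean algebra, monotonicity and full $\vee$-preservation are automatic.

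You are right to flag that the bound $|T(w^y)|\le|T(w)|$ requires $T\ge 0$; for general $T$ monotonicity of $\lambda_T$ can fail. In fact the paper's computation has the same hidden hypothesis: its opening equality rewrites $\lambda_T(x\sqcup y)$ as a limit of tail infima, which is valid precisely when the net $\pi\mapsto\bigvee_{u\in\pi}|T(u)|$ is decreasing along refinements, i.e.\ again when $|T(v)|\le|T(u)|$ for $v\sqsubseteq u$. Since the lemma is invoked only for positive $T$ in Theorem~\ref{th:ddddjfhhd7}, neither gap is material, and your explicit acknowledgment is, if anything, more honest than the paper's silent use of the same fact.
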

\begin{proof}
By Lemma~\ref{le:MMP222}, $\varphi(0) = \mathbf{1}_{f}(\lambda_{T}(0)) = \mathbf{1}_{f}(0)=0$ and
$\varphi(e) = \mathbf{1}_{f} (\lambda_{T}(e)) = \mathbf{1}_{f}(f)=f$. Fix any $x \in \mathfrak{F}_e$. Observe that, for any $\pi \in \Pi_x$ and any $u \in \pi$ one has $|T|(u) \leq |T|(x)$. Hence, $\bigvee_{u \in \pi} |T|(u) \leq |T|(x)$. Then we may write
\begin{equation*}
\begin{split}
\varphi (x) &= \mathbf{1}_f \bigl( \lambda_T(x) \bigr) \stackrel{\mbox{\tiny by Lemma~\ref{le:MMP222}}}{\leq} \lambda_T(x) = \bigwedge_{\pi \in \Pi_x} \bigvee_{u \in \pi} |T (u)| \\
&\leq \bigwedge_{\pi \in \Pi_x} \bigvee_{u \in \pi} |T| (u) \leq \bigwedge_{\pi \in \Pi_x} |T| (x) = |T| (x).
\end{split}
\end{equation*}

Now we prove that $\varphi$ is a $\bigvee$-preserving map. Show firstly that $\varphi(x\sqcup y)=\varphi(x)\vee\varphi(y)$. Let $x,y\in\mathfrak{F}_e$ and $x\bot y$. Then we have
\begin{equation*}
\begin{split}
\lambda_T( x \sqcup y ) &= \lim_{\pi \in \Pi_{x \sqcup y}} \, \bigwedge_{\pi' \geq \pi} \,\, \bigvee_{u \in \pi'} |T(u)| = \lim_{\Pi_{x \sqcup y} \ni \pi \geq \{x,y\}} \, \bigwedge_{\pi' \geq \pi} \,\, \bigvee_{u \in \pi'} |T(u)|\\
&=\lim_{\pi \in \Pi_x, \, \tau \in \Pi_y} \, \bigwedge_{\pi' \geq \pi, \, \tau' \geq \tau} \,\,
\bigvee_{u \in \pi' \cup \tau'} |T(u)|\\
&= \lim_{\pi \in \Pi_x, \, \tau \in \Pi_y} \, \bigwedge_{\pi' \geq \pi, \, \tau' \geq \tau} \Bigl( \bigvee_{u \in \pi'} |T(u)| \vee \bigvee_{v \in \tau'} |T(v)| \Bigr)\\
&= \lim_{\pi \in \Pi_x, \, \tau \in \Pi_y} \Bigl( \Bigl(\bigwedge_{\pi' \geq \pi} \,\, \bigvee_{u \in \pi'} |Tu| \Bigr) \vee \Bigl( \bigwedge_{\tau' \geq \tau} \,\, \bigvee_{v \in \tau'} |Tv| \Bigr) \Bigr)\\
&= \lim_{\pi \in \Pi_x} \bigwedge_{\pi' \geq \pi}\,\, \bigvee_{u \in \pi'} |Tu| \vee \lim_{\tau \in \Pi_y} \bigwedge_{\tau' \geq \tau} \,\, \bigvee_{v \in \tau'} |Tv|\\
&= \bigwedge_{\pi \in \Pi_x} \,\, \bigvee_{u \in \pi} |Tu| \vee \bigwedge_{\tau \in \Pi_y} \,\, \bigvee_{v \in \tau} |Tv| = \lambda_T(x) \vee \lambda_T(y).
\end{split}
\end{equation*}

\begin{equation*}
\begin{split}
\mbox{Hence,} \,\,\, \varphi( x \sqcup y ) &= \mathbf{1}_e \bigl( \lambda_T ( x \sqcup y )
\bigr) = \mathbf{1}_e \bigl( \lambda_T(x) \vee \lambda_T(y) \bigr)
\\&
= \mathbf{1}_e \bigl( \lambda_T(x) \bigr) \vee \mathbf{1}_e \bigl(
\lambda_T(y) \bigr) = \varphi(x) \vee \varphi(y). \,\,\,\,\,\,\,\,\,\,\,\,\,\,\,\,\,\,\,\,\,\,\,\,\,\,\,\,\,\,\,\,\,\,\,\,
\end{split}
\end{equation*}

If $x,y\in\mathfrak{F}_e$ with $x\leq y$ then
$$
\varphi(y)=\varphi \bigl((y-x)\sqcup(x) \bigr) = \varphi(y-x) \vee \varphi(x) \geq \varphi(x).
$$
Therefore $\varphi(x\vee y)\geq\varphi(x)\vee\varphi(y)$ for all $x,y \in \mathfrak{F}_e$.

On the other hand, for all $x,y \in \mathfrak{F}_e$ one has
$$
\varphi(x\vee y) = \varphi \bigl( (x - x \wedge y) \sqcup y \bigr) = \varphi(x - x \wedge y) \vee \varphi(y) \leq \varphi(x) \vee \varphi(y).
$$
Thus $\varphi(x\vee y) = \varphi(x)\vee\varphi(y)$ for every $x,y\in\mathfrak{F}_e$.
\end{proof}

\section{The main results}

\begin{thm} \label{th:ddddjfhhd7}
Let $E$ be an atomless vector lattice with the projection property, $F$ a Dedekind complete vector lattice. Then a positive laterally continuous abstract Uryson operator $T: E \to F$ is $\lambda$-narrow if and only if is pseudo-narrow.
\end{thm}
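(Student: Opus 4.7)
The direction ``$\lambda$-narrow $\Rightarrow$ pseudo-narrow'' has already been established in Theorem~\ref{thm:lambdapseudo} and does not use either lateral continuity of $T$ or the projection property of $E$. So the substantive content of this theorem is the converse, which I plan to prove by contrapositive: if $T$ is not $\lambda$-narrow, I will exhibit a disjointness preserving abstract Uryson operator $S$ with $0 < S \leq T = |T|$, which contradicts pseudo-narrowness.

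The starting point is to pick $e \in E^+$ with $f := \lambda_T(e) > 0$, which exists by assumption. Lemma~\ref{le:djjjdjd75} then supplies a Boolean $\vee$-preserving map $\varphi : \mathfrak{F}_e \to \mathfrak{F}_f$, defined by $\varphi(x) = \mathbf{1}_f(\lambda_T(x))$, dominated pointwise by $|T| = T$, and satisfying $\varphi(e) = f$. Next I would apply Monteiro's theorem (Theorem~\ref{th:Mont}) to upgrade the trivial Boolean homomorphism $\psi_0 : \{0, e\} \to \mathfrak{F}_f$ given by $\psi_0(0) = 0$, $\psi_0(e) = f$ (which obviously satisfies $\psi_0 \leq \varphi$ on its two-element domain) to a Boolean homomorphism $\psi : \mathfrak{F}_e \to \mathfrak{F}_f$ defined on the whole fragment Boolean algebra of $e$, still dominated by $\varphi$ and hence by $T$, with $\psi(e) = f$. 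Here $\mathfrak{F}_f$ is a Dedekind complete Boolean algebra because $F$ is Dedekind complete and $f \geq 0$, so the hypotheses of Monteiro's theorem are met.

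The final step is where the two extra hypotheses on $E$ and $T$ pay off: I would feed $\psi$ into Lemma~\ref{le:gemor} to obtain a disjointness preserving abstract Uryson operator $S \in \mathcal{U}^+(E,F)$ with $S \leq T$ and $S|_{\mathfrak{F}_e} = \psi$. Since $S(e) = \psi(e) = f > 0$, the operator $S$ is strictly positive, so $0 < S \leq T$ witnesses the failure of pseudo-narrowness, completing the contrapositive.

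The main obstacle has in fact already been absorbed into Lemma~\ref{le:gemor}, namely, the extension of a pointwise-dominated Boolean homomorphism defined only on $\mathfrak{F}_e$ to a globally defined, positive, disjointness preserving abstract Uryson operator on all of $E$ that is still dominated by $T$. In the argument above that lemma is used as a black box, and what remains is a clean chain: produce the Enflo--Starbird-based Boolean $\vee$-preserving map $\varphi$ via Lemma~\ref{le:djjjdjd75}, refine it to a genuine Boolean homomorphism $\psi$ via Monteiro, and lift $\psi$ to $S$ via Lemma~\ref{le:gemor}.
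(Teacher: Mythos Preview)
Your proposal is correct and follows essentially the same route as the paper's own proof: contrapositive from failure of $\lambda$-narrowness, Lemma~\ref{le:djjjdjd75} to produce the Boolean $\vee$-preserving map $\varphi$, Monteiro's theorem applied to the trivial subalgebra $\{0,e\}$ to obtain the Boolean homomorphism $\psi\leq\varphi\leq T$, and finally Lemma~\ref{le:gemor} to lift $\psi$ to a nonzero disjointness preserving $S\leq T$. Your additional remark that $\mathfrak{F}_f$ is Dedekind complete (needed for Monteiro) is a small point the paper leaves implicit.
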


\begin{proof}
By Theorem~\ref{thm:lambdapseudo} we only need to prove that a pseudo-narrow operator is $\lambda$-narrow. Suppose that $T$ is not $\lambda$-narrow and $e\in E$ is
such that $f=\lambda_{T}(e)>0$. By Lemma~\ref{le:djjjdjd75} we construct a Boolean $\vee$-preserving map $\varphi:\mathfrak{F}_e \to \mathfrak{F}_f$ with the corresponding properties. Let $A=\mathfrak{F}_e$, $B=\mathfrak{F}_f$, $A_{0}=\{0,e\}$,
$\psi_{0}:A_{0}\to B$ be the trivial Boolean homomorphism (i.e. $\psi_{0}(0)=0$ and
$\psi_{0}(e)=f$). Evidently, $\psi_{0}(x)\leq\varphi(x)$ for every $x\in A_{0}$. By Theorem~\ref{th:Mont}, there is a Boolean homomorphism $\psi: \mathfrak{F}_e \to \mathfrak{F}_f$ such that $\psi$ extends $\psi_{0}$ with $\psi(x)\leq\varphi(x)$ for all $x\in\mathfrak{F}_e$. By the choice of $\varphi$, one has that $\varphi(x)\leq T(x)$ for all $x\in\mathfrak{F}_e$. Thus,
$\psi(x)\leq T(x)$ for all $x\in\mathfrak{F}_e$. By Lemma~\ref{le:gemor}, there exists a disjointness preserving abstract Uryson operator $S: E \to F$ such that $0 \leq S \leq T$ and $S(x)=\psi(x)$ for all $x\in\mathfrak{F}_e$. In particular, $S(e)=\psi(e)=\psi_{0}(e)=f>0$
and hence $S>0$. This means that $T$ is not pseudo-narrow.
\end{proof}

As a consequence of the above results, we obtain the following theorem.

\begin{thm} \label{th:mainofwhich}
Let $E,F$ be  vector lattices such that E is atomless and has the projection property,
F an ideal of some order continuous Banach lattice. Then
\begin{enumerate}
  \item[(1)] Every laterally continuous abstract Uryson operator $T: E \to F$ is order narrow if and
only if it is pseudo-narrow.
  \item[(2)] The set $\mathcal U_{on}^{lc}(E,F)$ of all order narrow laterally continuous abstract Uryson operators is a band in the Dedekind complete vector lattice $\mathcal U^{lc}(E,F)$ of all laterally continuous abstract Uryson operator from $E$ to $F$. Moreover, the orthogonal complement to $\mathcal U_{on}^{lc}(E,F)$ equals the band generated by all disjointness preserving laterally continuous abstract Uryson operators from $E$ to $F$, which, in turn, equals the set $\mathcal U_{pe}^{lc}(E,F)$ of all laterally continuous pseudo embeddings.
  \item[(3)] Each laterally continuous abstract Uryson operator $T:E\to F$ is uniquely represented
            in the form $T=T_{D}+T_{N}$ where $T_{D}$ is a laterally continuous abstract Uryson pseudo embedding and $T_{N}$ is a laterally continuous narrow abstract Uryson operator.
\end{enumerate}
\end{thm}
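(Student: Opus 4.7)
The plan is to assemble (1) from the three equivalences already built in the paper, and then to read (2) and (3) off (1) together with Corollary~\ref{RTROcor}. At the outset I would record two background facts and use them freely. First, by \cite[Proposition~3.8]{Maz-1} the set $\mathcal U^{lc}(E,F)$ is a band of $\mathcal U(E,F)$, hence a vector sublattice; in particular $|T|\in\mathcal U^{lc}(E,F)$ whenever $T\in\mathcal U^{lc}(E,F)$. Second, the hypothesis that $F$ is an ideal of an order continuous Banach lattice automatically makes $F$ Dedekind complete, so the constructions of Theorems~\ref{fjjjjjg} and \ref{fjjjjjgg} apply.

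For (1), I would first observe that both sides of the equivalence depend only on $|T|$. Pseudo-narrowness of $T$ coincides with pseudo-narrowness of $|T|$ by the very definition (phrased in terms of disjointness preserving minorants of $|T|$), while order narrowness of $T$ coincides with order narrowness of $|T|$ by Theorem~\ref{th:narmod}, whose hypotheses match ours exactly. Hence it suffices to prove the equivalence for the positive laterally continuous operator $S=|T|$. For this I would chain Theorem~\ref{th:lamordern}, which gives order narrow $\Leftrightarrow$ $\lambda$-narrow for a positive abstract Uryson operator on our $E$ and $F$, with Theorem~\ref{th:ddddjfhhd7}, which gives $\lambda$-narrow $\Leftrightarrow$ pseudo-narrow for a positive laterally continuous abstract Uryson operator on an atomless vector lattice with the projection property and a Dedekind complete range.

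Once (1) is established, (2) is essentially immediate: it identifies $\mathcal U_{on}^{lc}(E,F)$ with $\mathcal U_{pn}^{lc}(E,F)$, after which Corollary~\ref{RTROcor} directly supplies that this is a band of $\mathcal U^{lc}(E,F)$ whose orthogonal complement equals $\mathcal U_{pe}^{lc}(E,F)={\rm Band}(\mathcal U_{dpo}^{lc}(E,F))$. For (3), applying the resulting direct-sum decomposition $\mathcal U^{lc}(E,F)=\mathcal U_{pe}^{lc}(E,F)\oplus\mathcal U_{on}^{lc}(E,F)$ to the given $T$ yields the announced representation $T=T_D+T_N$, its uniqueness being automatic from a direct-sum decomposition into mutually complementary bands.

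The conceptually hard step has already been done in the preparatory sections (notably the construction in Lemma~\ref{le:gemor} of a disjointness preserving minorant from a nonzero Enflo--Starbird value, which drives Theorem~\ref{th:ddddjfhhd7}). The only real obstacle at this final stage is therefore a hypothesis audit: I must verify that lateral continuity of $T$ transfers to $|T|$, which is exactly the content of $\mathcal U^{lc}(E,F)$ being a band; and that the hypotheses of Theorems~\ref{th:lamordern} and \ref{th:ddddjfhhd7} (the former requiring $E$ atomless and $F$ an ideal of an order continuous Banach lattice, the latter requiring $E$ atomless with the projection property and $F$ Dedekind complete) are both implied by the standing assumptions of the present theorem. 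After this audit, the remaining work is purely organizational.
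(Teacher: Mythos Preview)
Your proposal is correct and follows essentially the same approach as the paper: reduce (1) to $|T|$ via Theorem~\ref{th:narmod} and the definition of pseudo-narrowness, then chain Theorems~\ref{th:lamordern} and~\ref{th:ddddjfhhd7}, and derive (2) and (3) from (1) together with Corollary~\ref{RTROcor}. Your explicit hypothesis audit (in particular, that $|T|\in\mathcal U^{lc}(E,F)$ because this is a band, so Theorem~\ref{th:ddddjfhhd7} applies to $|T|$) is a point the paper leaves implicit.
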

\begin{proof}
Note that by \cite[p.~7]{LTII}, $F$ is Dedekind complete. We prove $(i)$ by the following scheme
$$
T \,\, \mbox{is order narrow} \stackrel{^{{\rm Theorem} \,
\ref{th:narmod}}}{\Longleftrightarrow} |T| \,\, \mbox{is order narrow}
\stackrel{^{{\rm Theorem} \, \ref{th:lamordern}}}{\Longleftrightarrow}
|T| \,\, \mbox{is $\lambda$-narrow}
$$

$$
\stackrel{^{{\rm Theorem} \, \ref{th:ddddjfhhd7}}}{\Longleftrightarrow}
|T| \,\, \mbox{is pseudo narrow} \stackrel{^{{\rm by} \,\, {\rm
definition}}}{\Longleftrightarrow} T \,\, \mbox{is pseudo narrow}.
$$

Items $(2)$ and $(3)$ follow from $(1)$ and Corollary~\ref{RTROcor}.
\end{proof}


\begin{thebibliography}{8}

\bibitem{Al} \textsc{C.~D.~Aliprantis}, \textsc{O.~Burkinshaw}, \emph{Positive Operators}, Springer, Dordrecht. (2006).

\bibitem{FHT} J.~Flores, F.~L.~Hern\'{a}ndez, P.~Tradacete \emph{Domination problem for strictly singular operators and other related classes}, Positivity 15, No 4 (2011), pp.~595--616.

\bibitem{FR} \textsc{J.~Flores}, \textsc{C.~Ruiz}, \emph{Domination by positive narrow operators}, Positivity 7 (2003), pp.~303--321.

\bibitem{Kad-1} \textsc{V.~M.~Kadets, M.~I.~Kadets}, \emph{Rearrangements of series in Banach spaces}, Transl. Math. Mon., v.86, AMS, Providence, R.I. (1991).

\bibitem{Ku} \textsc{A.~G.~Kusraev}, \emph{Dominated Operators}, Kluwer Acad. Publ., Dordrecht--Boston--London (2000).

\bibitem{Ku-1} \textsc{A.~G.~Kusraev, M.~A.~Pliev}, \emph{Orthogonally additive operators on lattice-normed spaces}, Vladikavkaz Math. J. No 3 (1999), pp.~33--43.

\bibitem{Ku-2} \textsc{A.~G.~Kusraev, M.~A.~Pliev}, \emph{Weak integral representation of the dominated orthogonally additive operators}, Vladikavkaz Math. J. No 4 (1999), pp.~22--39.

\bibitem{LTI} \textsc{J.~Lindenstrauss, L.~Tzafriri}, \emph{Classical Banach spaces, Vol. 1, Sequence spaces}, Springer--Verlag, Berlin--Heidelberg--New York (1977).

\bibitem{LTII}~\textsc{J.~Lindenstrauss, L.~Tzafriri}, \emph{Classical Banach spaces. Vol.2, Function  spaces}, Springer--Verlag, Berlin--Heidelberg--New York (1979).

\bibitem{MMP} \textsc{O.~V.~Maslyuchenko, V.~V.~Mykhaylyuk, M.~M.~Popov}, \emph{A lattice approach to narrow operators}, Positivity 13 (2009), pp.~459--495.

\bibitem{MP} \textsc{V.~V.~Mykhaylyuk, M.~M.~Popov}, \emph{On sums of narrow operators on Kothe function spaces}, J. Math. Anal. Appl. 404 (2013), pp.~554-561.

\bibitem{Maz-1} \textsc{J.~M.~Maz\'{o}n, S.~Segura de Le\'{o}n}, \emph{Order bounded ortogonally additive operators}, Rev. Roumane Math. Pures Appl. 35, No~4 (1990), pp.~329-353.

\bibitem{Maz-2} \textsc{J.~M.~Maz\'{o}n, S.~Segura de Le\'{o}n}, \emph{Uryson operators}, Rev. Roumane Math. Pures Appl. 35, No~5 (1990), pp.~431-449.

\bibitem{PP} \textsc{A.~M.~Plichko} and \textsc{M.~M.~Popov}, \emph{Symmetric function spaces on atomless probability spaces}, Dissertationes Math. (Rozprawy Mat.) 306 (1990), pp.~1--85.

\bibitem{Pl-3} \textsc{M.~Pliev}, \emph{Uryson operators on the spaces with mixed norm}, Vladikavkaz Math. J. No 3 (2007), pp~47-57.

\bibitem{Pl} \textsc{M.~Pliev}, \emph{Narrow operators on lattice-normed spaces}, Cent. Eur. J. Math. 9, No 6 (2011), pp.~1276--1287.

\bibitem{PSV} \textsc{M.~M.~Popov, E.~M.~Semenov, D.~O.~Vatsek}, \emph{Some problems on narrow operators on function spaces}, submitted for Cent. Eur. J. Math.

\bibitem{PR} \textsc{M.~Popov}, \textsc{B.~Randrianantoanina}, \emph{Narrow Operators on Function Spaces and Vector Lattices}, De Gruyter Studies in Mathematics 45, De Gruyter (2013).

\end{thebibliography}
\end{document}